\numberwithin{equation}{section}
\newtheorem{thm}{Theorem}[section]
\newtheorem{lem}[thm]{Lemma}
\newtheorem{cor}[thm]{Corollary}
\newtheorem{Prop}[thm]{Proposition}
\newtheorem{Rem}[thm]{Remark}
\newcommand{\cat}{{\rm cat}}
\newcommand{\A}{\mathbb{A}}
\newcommand{\B}{\mathbb{B}}
\newcommand{\C}{\mathbb{C}}
\newcommand{\D}{\mathbb{D}}
\newcommand{\E}{\mathbb{E}}
\newcommand{\dist}{{\rm dist}}
\newcommand{\N}{\mathbb{N}}
\newcommand{\R}{\mathbb{R}}
\newcommand{\cal}{\mathcal}
\renewcommand{\cat}{{\rm cat}}
\renewcommand{\i}{{\rm i}}
\renewcommand{\O}{{\mathcal O}}
\def\vr {\varepsilon}
\begin{document}

\title{Singularly perturbed critical Choquard equations}

\author[C.O.\ Alves]{Claudianor O. Alves}
\author[F. Gao]{Fashun Gao}
\author[M.\ Squassina]{Marco Squassina}
\author[M.\ Yang]{Minbo Yang$^*$}

\address[C.O.\ Alves]{Unidad Acad\'emica de Matem\'aticas - UAMat \newline\indent
	Universidade Federal de Campina Grande, \newline\indent
	CEP: 58429-900, Campina Grande - PB, Brasil}
\email{coalves@dme.ufcg.edu.br}

\address[F.\ Gao]{Department of Mathematics\newline\indent
	Zhejiang Normal University, \newline\indent
	321004, Jinhua, Zhejiang, China}
\email{fsgao@zjnu.edu.cn}

\address[M.\ Squassina]{Dipartimento di Matematica e Fisica \newline\indent
	Universit\`a Cattolica del Sacro Cuore, \newline\indent
	Via dei Musei 41, I-25121 Brescia, Italy}
\email{marco.squassina@unicatt.it}

\address[M.\ Yang]{Department of Mathematics\newline\indent
	Zhejiang Normal University, \newline\indent
	321004, Jinhua, Zhejiang, China}
\email{mbyang@zjnu.edu.cn}

\subjclass[2010]{35J20,35J60, 35B33}
\keywords{Choquard equation; Critical growth; Hardy-Littlewood-Sobolev inequality; Semi-classical solutions}

\thanks{Claudianor O. Alves is partially supported by CNPq/Brazil
	304036/2013-7;\\
Marco Squassina is member of the Gruppo
	Nazionale per l'Analisi Matematica, la Probabilit\`a e le loro Applicazioni;\\
 $^*$Minbo Yang is the corresponding author who is partially supported by NSFC (11571317) and ZJNSF (LY15A010010)}

\begin{abstract}
	In this paper we study the semiclassical limit for the singularly perturbed Choquard equation
	$$
	-\vr^2\Delta u +V(x)u =\vr^{\mu-3}\Big(\int_{\R^3} \frac{Q(y)G(u(y))}{|x-y|^\mu}dy\Big)Q(x)g(u)  \quad \mbox{in $\R^3$},
	$$
	where $0<\mu<3$, $\vr$ is a positive parameter, $V,Q$ are two continuous real function on
	$\R^3$ and $G$ is the primitive of $g$ which is of critical growth due to the Hardy-Littlewood-Sobolev inequality. Under suitable assumptions on $g$, we first establish the existence of ground states for the critical Choquard equation with constant coefficients. Next we establish existence and multiplicity of semi-classical solutions and characterize the concentration behavior by variational methods.
\end{abstract}

\maketitle

\begin{center}
	\begin{minipage}{8.5cm}
		\small
		\tableofcontents
	\end{minipage}
\end{center}
%

\section{Introduction and results}
The stationary Choquard equation
\begin{equation*}
 -\Delta u +V(x)u =\Big(\int_{\R^N} \frac{|u(y)|^p}{|x-y|^\mu}dy\Big)|u|^{p-2}u,  \qquad \mbox{in $\R^N$},
\end{equation*}
where $N\geq3$,  $0<\mu<N$, arises in many interesting physical situations in quantum theory and plays an important
role in the theory of Bose-Einstein condensation where it accounts for the finite-range many-body interactions. For $N=3$, $p=2$ and $\mu=1$, it was investigated by Pekar in \cite{P1} to study the quantum theory of a polaron at rest. In \cite{Li1}, Choquard applied it as approximation to Hartree-Fock theory of one-component plasma. This equation was also proposed by Penrose in \cite{MPT} as a model of selfgravitating matter and is known in that context as the Schr\"odinger-Newton equation. For a complete and updated
discussion upon the current literature of such problems, we refer the interested reader to the guide \cite{guide}. We also mention \cite{fract}, where the fractional case is treated.

In the present paper we are interested in the existence, multiplicity and concentration behavior of the semi-classical solutions of the singularly perturbed nonlocal elliptic equation
\begin{equation}\label{SCP}
-\vr^2\Delta u +V(x)u =\vr^{\mu-3}\Big(\int_{\R^3} \frac{Q(y)G(u(y))}{|x-y|^\mu}dy\Big)Q(x)g(u),  \qquad \mbox{in $\R^3$},
\end{equation}
where $0<\mu<3$, $\vr$ is a positive parameter, $V$, $Q$ are real continuous functions on $\R^3$. As $\varepsilon$ goes to zero in \eqref{SCP}, the existence and asymptotic behavior of the solutions of the singularly perturbed equation \eqref{SCP} is known as the {\em semi-classical problem}. It was used to describe the transition between of
Quantum Mechanics and classical Mechanics. For the local Schr\"{o}dinger equation
 \begin{equation*}
 -\vr^2\Delta u +V(x)u  =g(u)  \quad \mbox{in $\R^N$},
\end{equation*}
it goes back to the pioneer work \cite{FW} by Floer and Weinstein. Since then, it has been studied
extensively under various hypotheses on the potential and the nonlinearity, see for example
\cite{ABC, DXL, FW, JT, DF2, DF1, R, WX} and
the references therein. Particularly, the existence and concentration of solutions for local Schr\"{o}dinger equation with critical exponent was investigated in \cite{ADS, BZZ, DL, ZCZ}.  For a Schr\"{o}dinger equation of the form
\begin{equation}\label{S.S2}
 -\varepsilon^2\Delta u +V(x)u  =K(x)u^{r-1}+Q(x)u^{t-1}  \quad \mbox{in $\mathbb{R}^N$},
\end{equation}
Wang and Zeng \cite{WZ} proved that
the concentration points are located on the middle ground of the competing potential functions and in some cases are given explicitly in terms of these functions.
Cingolani and Lazzo \cite{CL} obtained a multiplicity
result involving the set of global minima of a function which  provides some kind of global median value between the minimum of $V$
and the maximum of $K$ and $Q$. We also mention the paper \cite{AMSS} by Ambrosetti, Malchiodi
and Secchi where the authors  considered the case $Q=0$. Among other results, they
proved that the number of solutions of \eqref{S.S2} is related with the set of minima of
a function given explicitly in terms of $V, K, r $, and the dimension $N$.  Ding and Liu \cite{DXL} considered
$$\aligned
\left(-\i\varepsilon\nabla + A(x)\right)^2 u  +V(x)u =Q(x)\big(g(|u|)+|u|^{2^*-2}\big)u,
\endaligned
$$
for $u\in H^1_A(\R^N, \mathbb{C})$, where
$A:\mathbb{R}^N \to\mathbb{R}^N$ denotes a continuous magnetic potential associated with a magnetic field $B$, $g(|u|)u$ is a superlinear and subcritical. Under suitable  assumptions on the potentials, the authors obtained some new concentration phenomena of the semi-classical ground states.
 It can be observed that if $u$ is a solution of the nonlocal equation (\ref{SCP}), for $x_0\in\R^N$, the function $v=u(x_0+\varepsilon x)$ satisfies
 $$
-\Delta v +V(x_0+\vr x)v  =\Big(\int_{\R^3} \frac{Q(x_0+\vr y)G(v(y))}{|x-y|^\mu}dy\Big)Q(x_0+\vr x)g(v) \qquad \mbox{in $\R^3$}.
 $$
 It suggests some convergence, as $\vr\to0$, of the family of solutions to a solution $u_0$ of the limit problem
\begin{equation*}
-\Delta v +V(x_0)v=Q^2(x_0)\Big(\int_{\R^3} \frac{G(v(y))}{|x-y|^\mu}dy\Big)g(v) \qquad \mbox{in $\R^3$}.
\end{equation*}
Hence we know that the equation
\begin{equation}\label{CC}
-\Delta u +u  =\Big(\int_{\R^3} \frac{G(v(y))}{|x-y|^\mu}dy\Big)g(u) \qquad \mbox{in $\R^{3}$}
\end{equation}
plays the role of limit equation in the study of the semiclassical problems. To apply the Lyapunov-Schmidt reduction techniques, it relies a lot on the uniqueness and non-degeneracy of the ground states of the limit problem which is not completely known for the ground states of the nonlocal Choquard equation \eqref{CC}. There is a considerable amount of work on investigating the
properties of this type equation. We refer to Lieb \cite{Li1} and
Lions \cite{Ls} for the existence  and uniqueness of
positive solutions to equation
\begin{equation}\label{AP}
 -\Delta u +u =\Big(\int_{\R^3} \frac{|u(y)|^2}{|x-y|}dy\Big)u   \qquad \mbox{in $\R^3$}.
\end{equation}
Recently, by using
the method of moving planes,  Ma and Zhao \cite{ML} proved that all
the positive solutions of  equation \eqref{AP} must be
radially symmetric and monotone decreasing about some fixed point. Which means that the positive solution of equation \eqref{AP}
is uniquely determined up to translations. Especially, they studied
the classification of all positive solutions to the generalized
nonlinear Choquard problem
\begin{eqnarray}\label{limit equation3}
-\Delta u+u=\Big(\int_{\R^N} \frac{|u(y)|^p}{|x-y|^\mu}dy\Big)|u|^{p-2}u,
\end{eqnarray}
under some assumptions on $\mu$, $p$ and $N$,  they proved that
all the positive solutions of \eqref{limit equation3} must be radially
symmetric and monotone decreasing about some fixed point.
In \cite{MS1}, Moroz and Van
Schaftingen completely investigated the qualitative properties of solutions of \eqref{limit equation3} and showed the regularity, positivity
and radial symmetry decay behavior at infinity.  The authors also considered in \cite{MS2, MS4} the
existence of ground states under the assumption of Berestycki-Lions type and studied the existence of solutions for the nonlocal equation with lower critical exponent due to the Hardy-Littlewood-Sobolev inequality.
For $N = 3$, $\mu=1$ and $F(s)=|s|^2$,  the uniqueness and non-degeneracy of the ground states were proved in Lenzmann, Wei
and Winter in \cite{Len, WW}. Wei
and Winter also constructed families of solutions
 by a Lyapunov-Schmidt type reduction when $\inf V>0$ and $Q(x)=1$.
 Cingolani et.al. \cite{CCS} applied the penalization arguments due to Byeon and Jeanjean \cite{BJ} and showed that there exists a family of solutions
having multiple concentration regions which are located around the minimum points of the potential. For any $N\geq 3$ and $F(u)=u^p$ with $\frac{2N-\mu}{N}\leq p<\frac{2N-\mu}{N-2}$ in \eqref{SCP}, Moroz and Van Schaftingen in \cite{MS3} developed a nonlocal penalization technique and showed that equation \eqref{SCP} has a family of solutions
concentrating around the local minimum of $V$ with $V$ satisfying some additional assumptions at infinity. In \cite{AY1, AY2}, Alves and Yang proved the existence, multiplicity and concentration of solutions for the equation by penalization method and Lusternik-Schnirelmann theory.
The planar case was considered by \cite{ACTY}, where the authors first established the existence of ground state for the limit problem with critical exponential growth and then studied the concentration around the global minimum set. In \cite{YD}, Yang and Ding considered the equation
 $$
-\varepsilon^2\Delta u +V(x)u  =\Big(\int_{\R^3} \frac{u^p(y)}{|x-y|^\mu}dy\Big)u^{p-1} \quad \mbox{in $\R^3$},
$$
and they obtained the existence of solutions which goes to $0$ with suitable parameter $p$ and $\mu$. To study problem \eqref{SCP} variationally the following Hardy-Littlewood-Sobolev inequality \cite{LL}
is the starting point.
\begin{Prop}[Hardy-Littlewood-Sobolev inequality]
	\label{HLS}
 Let $t,r>1$ and $0<\mu<3$ with
 $$
 1/t+\mu/3+1/r=2,
 $$
 $f\in L^{t}(\mathbb{R}^3)$ and $h\in L^{r}(\mathbb{R}^3)$. There exists a sharp constant $C(t,\mu,r)$, independent of $f,h$, such that
\begin{equation}\label{HLS1}
\int_{\mathbb{R}^{3}}\int_{\mathbb{R}^{3}}\frac{f(x)h(y)}{|x-y|^{\mu}}dxdy\leq C(t,\mu,r) |f|_{t}|h|_{r}.
\end{equation}
If $t=r=6/(6-\mu)$, then
$$
 C(t,\mu,r)=C(\mu)=\pi^{\frac{\mu}{2}}\frac{\Gamma(\frac{3}{2}-\frac{\mu}{2})}{\Gamma(3-\frac{\mu}{2})}\left\{\frac{\Gamma(\frac{3}{2})}{\Gamma(3)}\right\}^{-1+\frac{\mu}{3}}.
$$
In this case there is equality in \eqref{HLS1} if and only if $f\equiv Ch$ and
$$
h(x)=A(\gamma^{2}+|x-a|^{2})^{-(6-\mu)/2}
$$
for some $A\in \mathbb{C}$, $\gamma\in\mathbb{R}\setminus\{0\}$ and $a\in \mathbb{R}^{3}$.
\end{Prop}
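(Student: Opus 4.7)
\medskip

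\noindent\textbf{Proof plan.} The proposition is the classical Hardy--Littlewood--Sobolev inequality together with Lieb's identification of the sharp constant and of the extremals on the diagonal $t=r=6/(6-\mu)$. I would attack the two statements separately: first the bound with some finite constant, then the sharp version.

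For the non-sharp inequality the plan is to reduce \eqref{HLS1} to a weak-type estimate for the Riesz potential $I_\mu f(x)=\int_{\R^3}|x-y|^{-\mu}f(y)\,dy$ and then interpolate. Writing the kernel as $|x-y|^{-\mu}=\mu\int_0^\infty r^{-\mu-1}\chi_{\{|x-y|<r\}}\,dr$ and splitting the integral at a threshold $R$ to be optimized, a direct calculation shows $I_\mu:L^t(\R^3)\to L^{s,\infty}(\R^3)$ is bounded for $1/s=1/t+\mu/3-1$, which by the Marcinkiewicz interpolation theorem upgrades to strong $(t,s)$-boundedness whenever $1<t<s<\infty$. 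Duality $1/t+\mu/3+1/r=2$ then yields the two-function formulation \eqref{HLS1}. This part is standard and quick; the only mildly delicate ingredient is the optimization in $R$.

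The hard part is the sharp constant and the classification of extremals in the diagonal case, and this is where I would follow Lieb's strategy. By Riesz's rearrangement inequality one may replace $f,h$ by their symmetric decreasing rearrangements $f^*,h^*$ without decreasing the left-hand side nor changing the $L^{6/(6-\mu)}$ norms, so the maximization problem may be restricted to radial, radially nonincreasing functions. To gain compactness I would next use the conformal (M\"obius) invariance of the functional: through stereographic projection $\R^3\to S^3$ the ratio
\[
\frac{\int\!\!\int_{\R^3\times\R^3}\frac{f(x)h(y)}{|x-y|^\mu}\,dx\,dy}{|f|_{6/(6-\mu)}\,|h|_{6/(6-\mu)}}
\]
pulls back to a conformally invariant quantity on $S^3$, where the embedding into an $L^p$-space is compact and a maximizer therefore exists. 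Pushing this maximizer back to $\R^3$ gives an extremal in $L^{6/(6-\mu)}(\R^3)$, and the conformal group acts transitively on the extremal set so one may normalize to a radial maximizer centered at the origin.

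Once existence is secured, I would write the Euler--Lagrange equation
\[
I_\mu f(x)=\lambda\,f(x)^{\frac{\mu}{6-\mu}},
\]
observe that the equation is conformally covariant under the same group, and combine this with the fact that any radial positive solution must be strictly decreasing to conclude that the extremal set is exactly the orbit $\{A(\gamma^2+|x-a|^2)^{-(6-\mu)/2}\}$ of the conformal group; uniqueness up to this orbit follows from a moving-plane argument or, alternatively, from the strict rearrangement inequality. Finally, plugging the explicit form back into both sides and evaluating the resulting beta-type integrals yields the closed expression for $C(\mu)$ in terms of Gamma functions. The main obstacle is clearly the uniqueness/classification step: establishing the non-degeneracy of the extremal orbit and ruling out other critical points of the Euler--Lagrange equation is what forces one to use either the full power of conformal invariance on $S^3$ or a rigidity argument via moving planes, and it is here that all the depth of the sharp HLS inequality is concentrated.
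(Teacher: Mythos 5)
The paper does not prove Proposition \ref{HLS}: it is stated without proof as a background fact and attributed directly to Lieb--Loss \cite{LL} (``the following Hardy-Littlewood-Sobolev inequality \cite{LL} is the starting point''). So there is no internal argument to compare your sketch against; the reference point is Lieb's own proof (his 1983 Annals paper, reproduced in \cite{LL}), which is indeed the route you are outlining: weak-type bound plus Marcinkiewicz interpolation for the non-sharp inequality, Riesz rearrangement, conformal invariance via stereographic projection for the sharp constant, then Euler--Lagrange and classification.

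There is, however, one step in your plan that is wrong as stated and happens to be exactly where the real difficulty sits. You write that after passing to $S^3$ ``the embedding into an $L^p$-space is compact and a maximizer therefore exists.'' No such compact embedding is available: the conformal group of $S^3$ is noncompact and acts on the sharp-HLS quotient on the sphere exactly as translations and dilations do on $\R^3$, so a maximizing sequence on $S^3$ can still concentrate at a point, and no compactness is gained by the mere change of domain. (If it were, the extremals would not need to form a noncompact conformal orbit.) What Lieb actually does, after symmetric decreasing rearrangement, is use Helly's selection theorem to extract an a.e.\ limit, apply the Brezis--Lieb lemma to control the defect, and use the conformal freedom to renormalize the maximizing sequence so the limit is nontrivial; the modern alternative is the Carlen--Loss competing-symmetries argument. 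You should replace the compactness claim with one of these mechanisms before the rest of your outline (Euler--Lagrange equation, identification of the conformal orbit, evaluation of the constant via beta integrals) can go through.
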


\noindent
Notice that, by the Hardy-Littlewood-Sobolev inequality, for $u\in  H^1(\R^3)$, the integral
$$
\int_{\mathbb{R}^{3}}\int_{\mathbb{R}^{3}}\frac{|u(x)|^{t}|u(y)|^{t}}{|x-y|^{\mu}}dxdy
$$
is well defined if
$$
\frac{6-\mu}{3}\leq t\leq 6-\mu.
$$
Thus $(6-\mu)/3$ is called the lower critical exponent and $6-\mu$ is the upper critical exponent due to the Hardy-Littlewood-Sobolev inequality, we also recall that $6$ is critical Sobolev exponent for dimension 3.
As to the best knowledge of us, the existing results for the existence and concentration behavior of solutions for the Choquard equation were obtained under the subcritical growth assumption, namely $t<6-\mu$. It is then quite natural to ask if the nonlinearity $g(u)$ in equation \eqref{CC} is of upper critical growth in the sense of the Hardy-Littlewood-Sobolev inequality, does the ground state solution still exist? Furthermore, can we establish the existence and multiplicity results for the singular perturbed critical Choquard equation \eqref{SCP} and characterize the concentration phenomena around the minimum set of linear potential $V(x)$ or the maximum set of the nonlinear potential $Q(x)$? In the present paper we are going to answer the above questions and to investigate the existence, multiplicity and the concentration behavior of the solutions of the Choquard equation with critical exponent due to the Hardy-Littlewood-Sobolev inequality.

The first goal of the present paper is to study the existence of nontrivial solution for the critical Choquard equation of the form
\begin{equation}\label{A}
\left\{\begin{array}{l}
\displaystyle-\Delta u+\kappa u
=\left(\int_{\mathbb{R}^3}\frac{\nu|u(y)|^{6-\mu}+\tau F(u(y))}{|x-y|^{\mu}}dy\right)\Big(\nu|u|^{4-\mu}u+ \frac{\tau}{6-\mu}f(u)\Big)\hspace{4.14mm}\mbox{in}\hspace{1.14mm} \R^{3},\\
\displaystyle u\in H^{1}(\R^{3}),
\end{array}
\right.
\end{equation}
where  $0<\mu<3$, $\kappa,\nu,\tau$ are positive constants and
$$
F(u)=\displaystyle\int_{0}^{u}f(s)ds.
$$
Since we are interested in the existence of positive solutions, we shall suppose that $f : \R^+\to \R$ verifies the following conditions.
\vskip2pt
\noindent
There exists $p,q,\zeta$ such that
$$
\frac{6-\mu}{3}<q \leq p<6-\mu,\quad \ 5-\mu<\zeta<6-\mu,
$$
and $c_{0},c_{1}> 0$ such that for all $s\in\mathbb{R}$:
$$
|f(s)|\leq c_{0}(|s|^{q-1}+|s|^{p-1})\ \   \hbox{and } \ \  F(u)\geq c_1|s|^\zeta.\eqno{(f_1)}
$$
We suppose that $f$ verifies the Ambrosetti-Rabinowitz type condition for nonlocal problems:
there is $\alpha>2$ with
$$
0<\alpha F(s)\leq 2f(s)s, \quad \forall s \in\R^+. \eqno{(f_2)}
$$
Moreover
$$
\{\displaystyle s\to {f(s)}\}\ \ \hbox{ is strictly increasing on } \R^+. \eqno{(f_3)}
$$

\begin{Rem} \label{ff}
From $(f_3)$ and $(f_2)$, we know there is $\varsigma >0$ such that
$$
f'(s)s^2 -\left(1+\varsigma-\frac{\alpha}{2}\right)f(s)s> 0,\quad  \forall s \in\R^+. \eqno{(f_4)}
$$
In fact, since $f'(s)>0$ and $f(s)s>0$, by taking $\varsigma=\frac12(\frac{\alpha}{2}-1)$, we obtain $(f_4)$ immediately.
\end{Rem}
\noindent
The first result is about the existence of ground state for the autonomous case, that is
 \begin{thm}[Existence of ground states]
 	\label{AQ}
Suppose that $(f_1)$-$(f_3)$ hold. Then, for any $\kappa,\nu,\tau>0$, \eqref{A} admits a ground state solution.
\end{thm}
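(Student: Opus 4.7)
The plan is to obtain a ground state via the Mountain Pass Theorem combined with a Brezis--Nirenberg type energy estimate and a Palais--Smale analysis below a nonlocal critical threshold. Writing $H(s)=\nu|s|^{6-\mu}+\tau F(s)$, so that $H'(s)=\nu(6-\mu)|s|^{4-\mu}s+\tau f(s)$, the natural functional is
\[
J(u)=\frac{1}{2}\int_{\R^{3}}(|\nabla u|^{2}+\kappa u^{2})\,dx-\frac{1}{2(6-\mu)}\iint_{\R^{3}\times\R^{3}}\frac{H(u(x))H(u(y))}{|x-y|^{\mu}}\,dx\,dy,
\]
whose critical points in $H^1(\R^3)$ are weak solutions of \eqref{A}. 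The Hardy--Littlewood--Sobolev inequality (Proposition \ref{HLS}) together with $(f_1)$ shows $J\in C^1$ and has mountain pass geometry: $J\ge\alpha_0>0$ on a small sphere, while $(f_2)$ gives $J(tu_0)\to-\infty$ along any fixed nontrivial ray. The strict monotonicity $(f_3)$, combined with Remark \ref{ff}, provides, for each $u\neq 0$, a unique Nehari projection $t_u u\in\mathcal{N}=\{u\neq0:\langle J'(u),u\rangle=0\}$, with $t_u$ the unique maximizer of $t\mapsto J(tu)$; a standard deformation argument then identifies the mountain pass level $c$ with the Nehari infimum $c_{\mathcal{N}}=\inf_{\mathcal{N}}J$.

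The crux of the proof is the strict inequality $c<c^{*}$, where $c^{*}$ is the mountain pass value of the limiting purely critical problem $-\Delta u+\kappa u=\nu^{2}(|u|^{6-\mu}\ast|x|^{-\mu})|u|^{4-\mu}u$; by the Moroz--Van Schaftingen classification the latter is attained on the Aubin--Talenti family, so that $c^{*}$ is explicit in terms of $C(\mu)$ from Proposition \ref{HLS} and the Sobolev constant. I would evaluate $J$ along a cut-off bubble
\[
U_{\varepsilon}(x)=\phi(x)\Bigl(\frac{\varepsilon}{\varepsilon^{2}+|x|^{2}}\Bigr)^{1/2},
\]
with $\phi$ a fixed radial cutoff, and estimate $\sup_{t\ge 0}J(tU_\varepsilon)$ as $\varepsilon\to 0^{+}$. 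The purely critical part contributes exactly $c^{*}$, the $\kappa\|U_\varepsilon\|_2^2$ term produces the usual positive $O(\varepsilon)$ error, and the $F$--$|u|^{6-\mu}$ cross convolution together with the pure $F$--$F$ convolution yield a negative contribution bounded from below through the lower bound $F(s)\ge c_1|s|^\zeta$ in $(f_1)$. The assumption $5-\mu<\zeta<6-\mu$ is precisely the range in which this subcritical gain beats the $O(\varepsilon)$ loss, yielding $c<c^{*}$. Calibrating these competing rates is the main technical obstacle.

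With $c<c^{*}$ secured, the argument closes through the Palais--Smale condition at level $c$. Boundedness of any $(PS)_c$-sequence $(u_n)$ is obtained by combining $J(u_n)-\frac{1}{\theta}\langle J'(u_n),u_n\rangle$ for some $\theta\in(2,\min\{\alpha,2(6-\mu)\}]$ with the pointwise inequality $H'(s)s\ge\frac{\theta}{2}H(s)$, itself a consequence of $(f_2)$. Passing to a weak limit $u_n\rightharpoonup u$ in $H^1(\R^3)$, a nonlocal Brezis--Lieb splitting (in the spirit of Moroz--Van Schaftingen for convolution-type nonlinearities) decomposes the double convolution along $u_n=u+v_n$ with $v_n\rightharpoonup 0$; the residual $v_n$ then either converges to zero in $H^1$ or concentrates and carries energy at least $c^{*}$, contradicting $c<c^{*}$. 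Thus $u_n\to u$ strongly, and $u$ is a nontrivial critical point with $J(u)=c=c_{\mathcal{N}}$, i.e.\ a ground state. Replacing $f$ by its zero extension on $(-\infty,0]$ and testing against $u^{-}$ allows one to take $u\ge 0$, and the strong maximum principle then upgrades $u$ to a positive solution.
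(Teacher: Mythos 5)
Your overall strategy matches the paper's: Mountain Pass geometry via $(f_1)$--$(f_2)$, the Nehari characterization via $(f_3)$, and the crucial strict upper bound on the minimax level obtained by testing along a cut-off Aubin--Talenti bubble and exploiting the lower bound $F(s)\ge c_1|s|^\zeta$ with $5-\mu<\zeta<6-\mu$ (this is exactly Lemma~\ref{EMP1}). The gap lies in your compactness step, and it is genuine. You pass to a weak limit $u_n\rightharpoonup u$ and invoke a Brezis--Lieb splitting to assert that the residual $v_n=u_n-u$ ``either converges to zero in $H^1$ or concentrates and carries energy at least $c^*$.'' On $\R^3$ this dichotomy is incomplete: because the functional is translation-invariant, $v_n$ can carry a fixed nontrivial profile that translates to infinity while contributing only subcritical (Choquard) energy, delivering exactly the mountain-pass level $m_{1,1,1}$ rather than $c^*$. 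Nothing in $c<c^*$ forbids that scenario, and in fact if $u_n$ runs off to infinity the weak limit is $u=0$, which you never rule out; your argument then closes with a ``nontrivial critical point'' that may well be trivial.

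The paper handles this precisely with Lemma~\ref{PS1}: any $(PS)_c$-sequence at a level below $\frac{5-\mu}{2(6-\mu)}S_{H,L}^{\frac{6-\mu}{5-\mu}}$ cannot vanish in the Lions sense, because after vanishing the subcritical convolution terms disappear and $\langle\Phi'(u_n),u_n\rangle=o(1)$ forces the remaining purely critical energy to exceed the threshold. Once nonvanishing is secured, one translates so the concentration ball is centered at the origin, obtains $u\neq 0$ from local strong $L^2$-convergence, and closes \emph{without} strong $H^1$-convergence of $u_n$: pass the weak limit in the equation (via the $L^{6/5}$-weak convergence of the Riesz-potential factor) to see $\Phi'_{1,1,1}(u)=0$, then apply Fatou's lemma to $\Phi_{1,1,1}-\tfrac12\langle\Phi'_{1,1,1},\cdot\rangle$ to conclude $\Phi_{1,1,1}(u)=m_{1,1,1}$. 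You should insert the nonvanishing/translation step before taking the weak limit; doing so makes the Brezis--Lieb machinery unnecessary. As a small additional caveat, your description of $c^*$ as the mountain-pass value of $-\Delta u+\kappa u=\nu^2(|u|^{6-\mu}\ast|x|^{-\mu})|u|^{4-\mu}u$ is a slight misstatement: by the Poho\v{z}aev-type identity recalled in the paper's remark, that problem has no nontrivial solution for $\kappa>0$; the threshold is the $D^{1,2}$-level of the pure critical problem (no zero-order term), attained on the Aubin--Talenti family.
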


 \begin{Rem}\rm
As observed for the local Schr\"{o}dinger equation, the subcritical perturbation is necessary to secure the existence of a nontrivial solution. Note that if $f(u)=0$ and $u$ is a solution, we can establish the following $Poho\check{z}aev$ identity
$$
\frac{1}{2}\int_{\R^3} |\nabla u|^{2}dx+\frac{3\kappa}{2}\int_{\R^3} |u|^{2}dx=\frac{\nu^2}{2}\int_{\R^3}
\int_{\R^3}\frac{|u(x)|^{6-\mu}|u(y)|^{6-\mu}}{|x-y|^{\mu}}dxdy.
$$
Then $\kappa\int_{\R^3} |u|^{2}dx=0$,
which means there are no solutions with $\kappa \neq 0$.
\end{Rem}

Next we are going to study the existence of semi-classical solutions
with concentration around the global maximum of $Q(x)$. For simplicity, we assume that $V(x)=1$ and consider
\begin{equation}\label{SCP1}
-\vr^2\Delta u +u =\vr^{\mu-3}\left(\int_{\mathbb{R}^3}\frac{Q(y)(|u(y)|^{6-\mu}+F(u(y)))}{|x-y|^{\mu}}dy\right)\Big(Q(x)\Big(|u|^{4-\mu}u+ \frac{1}{6-\mu}f(u)\Big)\Big)  \quad \mbox{in} \quad \R^3,
\end{equation}

We denote
$$
\nu_{\max}:=\max_{x\in\R^3} Q(x), \ \ \mathcal{Q}:=\{x\in \R^3: Q(x)=\nu_{\max}\},\ \ \nu_{\max}>\nu_{\infty}=\limsup_{|x|\to \infty}Q(x) \eqno{(Q)}
$$
and suppose that $Q:\R^3\to \R$ is a bounded continuous function with $\inf_{x\in\R^3} Q(x)>0$.
For this case we have the following theorem

\begin{thm}[Semiclassical limit I: concentration around maxima of $Q$]
	\label{Result1}
Suppose that the nonlinearity $f$ satisfies $(f_1)$-$(f_3)$ and the potential function $Q$ satisfies condition $(Q)$. Then, for any $\vr>0$, equation \eqref{SCP1} has at least one positive ground state solution $u_\vr$. Moreover, the following facts hold:
\begin{itemize}
\item[$(a)$] There exists a maximum point $x_\vr\in\R^3$  of $u_\vr$, such that
$$
\lim_{\vr\to0}\dist(x_\vr, \mathcal{Q})=0,
$$
and for some $c,C>0$, \\
$$
|u_\vr(x)|\leq C\exp\Big(-\frac{c}{\vr}|x-x_\vr|\Big).
$$
\item[$(b)$] Setting $v_\vr(x):=u_\vr(\vr x+x_\vr)$, for any sequence $x_\vr\to x_0$, $\vr\to0$, $v_\vr$ converges in $H^{1}(\R^3)$ to a ground state solution $v$ of
 $$
-\Delta v +v  =\nu^2_{\max}\left(\int_{\mathbb{R}^3}\frac{|v(y)|^{6-\mu}+F(v(y))}{|x-y|^{\mu}}dy\right)\Big(|v|^{4-\mu}v+ \frac{1}{6-\mu}f(v)\Big).
$$
\end{itemize}
\end{thm}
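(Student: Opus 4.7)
The plan is the classical semiclassical scheme adapted to the nonlocal critical setting, with Theorem \ref{AQ} providing the ground state of the autonomous limit. After the change of variable $v(y)=u(\vr y)$, problem \eqref{SCP1} becomes
\begin{equation*}
-\Delta v+v=\Big(\int_{\R^3}\frac{Q(\vr y)(|v(y)|^{6-\mu}+F(v(y)))}{|x-y|^\mu}dy\Big)Q(\vr x)\Big(|v|^{4-\mu}v+\tfrac{1}{6-\mu}f(v)\Big),
\end{equation*}
with energy functional $I_\vr$ of mountain-pass type on $H^1(\R^3)$, whose critical level I denote $c_\vr$. For $a>0$, let $m_a$ be the ground state level of \eqref{A} with $\kappa=1$, $\nu=\tau=a$, which is exactly the autonomous limit equation with coefficient $a^2$; existence is supplied by Theorem \ref{AQ}. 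The pointwise monotonicity of the energy in $a$ together with the Nehari characterization of $m_a$ yields strict monotonicity of $a\mapsto m_a$, hence the crucial gap $m_{\nu_{\max}}<m_{\nu_\infty}$ provided by hypothesis $(Q)$.

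First I would prove $c_\vr\to m_{\nu_{\max}}$ as $\vr\to 0$: the upper bound by plugging a translate $U(\cdot-x_0/\vr)$ of the ground state of the $\nu_{\max}$-limit problem (with $x_0\in\mathcal{Q}$) into the mountain-pass definition and using $Q(\vr y)\to\nu_{\max}$ on its support; the matching lower bound from a concentration--compactness analysis of Palais--Smale sequences. The delicate point is PS-compactness below the threshold $m_{\nu_\infty}$. The two obstructions are mass escaping to spatial infinity -- which would produce in the limit a nontrivial solution of the constant coefficient equation with coefficient at most $\nu_\infty$, contributing energy at least $m_{\nu_\infty}$, thus excluded -- and critical concentration in the sense of the Hardy-Littlewood-Sobolev inequality. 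The latter is ruled out exactly as in Theorem \ref{AQ}: the lower bound $F(u)\ge c_1|u|^\zeta$ with $5-\mu<\zeta<6-\mu$ keeps the mountain-pass level strictly below the threshold coming from the sharp HLS profile. I expect this critical-level estimate, intertwined with the localization of $Q$ near $\mathcal{Q}$, to be the main obstacle. Once compactness is restored we obtain a nontrivial critical point $v_\vr$ of $I_\vr$ at level $c_\vr$; replacing it by $|v_\vr|$ and applying the strong maximum principle, then pulling back via $u_\vr(x)=v_\vr(x/\vr)$, yields the positive ground state $u_\vr$ of \eqref{SCP1}.

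For part (b), let $y_\vr$ be a maximum point of $v_\vr$ and set $w_\vr(\cdot)=v_\vr(\cdot+y_\vr)$. A Lions-type argument applied to $v_\vr$, combined with a uniform lower bound on $c_\vr$, shows that $\|v_\vr\|_{L^\infty}$ is bounded from below, so $w_\vr$ has non-vanishing mass on a fixed ball. Up to a subsequence $\vr y_\vr=:x_\vr\to x_0\in\R^3$ and $w_\vr\rightharpoonup w$ weakly in $H^1(\R^3)$, with $w$ solving
\begin{equation*}
-\Delta w+w=Q(x_0)^2\Big(\int_{\R^3}\frac{|w(y)|^{6-\mu}+F(w(y))}{|x-y|^\mu}dy\Big)\Big(|w|^{4-\mu}w+\tfrac{1}{6-\mu}f(w)\Big).
\end{equation*}
Fatou applied to the energy identity forces $m_{Q(x_0)}\le\liminf c_\vr=m_{\nu_{\max}}$, hence by strict monotonicity of $a\mapsto m_a$ one gets $Q(x_0)=\nu_{\max}$, that is $x_0\in\mathcal{Q}$, proving $\dist(x_\vr,\mathcal{Q})\to0$ in (a). The same energy identity upgrades the weak convergence $w_\vr\rightharpoonup w$ to strong convergence in $H^1(\R^3)$, and $w$ is a ground state of the stated limit equation.

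Finally, for the exponential decay in (a), I would run a Moser iteration on the rescaled equation for $w_\vr$ to obtain uniform $L^\infty$ bounds. Combined with the decay at infinity of the Riesz potential of an $L^1\cap L^\infty$ function, this makes the nonlocal right-hand side smaller than $\tfrac12 w_\vr(x)$ for $|x|\ge R_0$, uniformly in $\vr$. Comparison with the radial supersolution $W(x)=Ce^{-c|x|}$ of $-\Delta W+\tfrac12 W=0$ then yields $w_\vr(x)\le Ce^{-c|x|}$, which transcribed back to the original coordinates gives $|u_\vr(x)|\le C\exp(-c|x-x_\vr|/\vr)$, completing the proof.
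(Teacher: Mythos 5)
Your overall architecture (rescale, mountain-pass on the Nehari manifold of $I_\vr$, energy level below the Hardy--Littlewood--Sobolev threshold, translate, identify the limit problem, upgrade to strong convergence) is the same as the paper's; the divergence, and where you have a genuine gap, is in how mass escape to spatial infinity is ruled out in the existence step.

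You write that if a Palais--Smale sequence at level $c_\vr<m_{1,\nu_\infty,\nu_\infty}$ loses its mass at infinity, then translating produces ``in the limit a nontrivial solution of the constant coefficient equation with coefficient at most $\nu_\infty$''. But under condition $(Q)$ the function $Q$ merely has $\limsup_{|x|\to\infty}Q(x)=\nu_\infty$, not a limit. For fixed $\vr>0$, if the mass is at $y_n\to\infty$, the translated coefficient $x\mapsto Q(\vr x+\vr y_n)$ is evaluated on balls of fixed radius $\vr R$ centered at $\vr y_n\to\infty$, where $Q$ may oscillate; there is no constant-coefficient limit PDE for the escaping profile to satisfy. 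The intuition that the escaping part costs at least $m_{1,\nu_\infty,\nu_\infty}$ (by the strict monotonicity of Lemma~\ref{CO}) is correct, but you need a different mechanism to realize it. The paper's mechanism is a truncation comparison: with $Q^{e}=\min\{e,Q\}$ for any $\nu_\infty<e<\nu_{\max}$, the set $\{Q>e\}$ is bounded, so a PS sequence $u_n\rightharpoonup 0$ cannot feel the difference $Q-Q^e$ in the limit; projecting $u_n$ onto the Nehari manifold $\tilde{\cal N}^e_\vr$ of the partially truncated functional $\tilde I^e_\vr$ yields $\tilde c^e_{\vr}\leq c_{\vr}$, while Lemma~\ref{PCO} gives $\tilde c^e_{\vr}\geq m_{1,\nu_{\max},e}$; combined with the upper bound $\limsup_\vr c_\vr\leq m_{1,\nu_{\max},\nu_{\max}}$ from Lemma~\ref{UB1}, this contradicts Lemma~\ref{CO} because $e<\nu_{\max}$. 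This avoids any limit problem at infinity. You would need to replace your ``constant coefficient equation at infinity'' by such a projection/comparison argument (or else strengthen $(Q)$ to require $Q$ to have a limit at infinity, which the paper does not).

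Note that in part~(b), where $\vr_n\to 0$ \emph{and} $\vr_n y_n\to\infty$ simultaneously, the ``limit problem with constant coefficient $Q_0\leq\nu_\infty$'' argument you describe does become valid (this is exactly how the paper rules out $\vr_n y_n\to\infty$ in Lemma~\ref{CP}, essentially using uniform continuity of $Q$ on balls of shrinking radius $\vr_n R$), and your use of Fatou with the strict monotonicity of $a\mapsto m_{1,a,a}$ to force $Q(x_0)=\nu_{\max}$ matches the paper. The critical-concentration exclusion via the lower bound $F(u)\geq c_1|u|^\zeta$ with $5-\mu<\zeta<6-\mu$, and the Moser-iteration plus comparison-function exponential decay, are the same as the paper's Lemmas~\ref{EMP1} and~\ref{0AI}.
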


Finally we are going to study the existence, multiplicity of semiclassical solutions that concentrating around the global minimum of $V(x)$. We are going to study
\begin{equation}\label{SCP2}
-\vr^2\Delta u+V(x)u =\vr^{\mu-3}\left(\int_{\mathbb{R}^3}\frac{|u(y)|^{6-\mu}+F(u(y))}{|x-y|^{\mu}}dy\right)\Big(|u|^{4-\mu}u+ \frac{1}{6-\mu}f(u)\Big)  \quad \mbox{in} \quad \R^3.
\end{equation}

\noindent
Assume that $V:\R^3\to \R$ is a bounded continuous function satisfying:
$$
0<\kappa_{\min}:=\min_{x\in\R^3} V(x), \ \ \mathcal{V}:=\{x\in \R^3: V(x)=\kappa_{\min}\},\ \
\kappa_{\min}<\kappa_{\infty}=\liminf_{|x|\to \infty}V(x)<\infty.\eqno{(V)}
$$
This
kind of hypothesis was introduced by Rabinowitz in \cite{R}.

\begin{thm}[Semiclassical limit II: concentration around minima of $V$]
	\label{Result2}
Suppose that the nonlinearity $f$ satisfies
$(f_1)$-$(f_3)$ and the potential function $V(x)$ satisfies condition $(V)$.
Then, for any $\vr>0$, equation \eqref{SCP2} has at least one positive ground state solution $u_\vr$. Moreover,
\begin{itemize}
\item[$(a)$] There exists a maximum point $x_\vr\in\R^3$  of $u_\vr$, such that
$$
\lim_{\vr\to0}\dist(x_\vr, \mathcal{V})=0,
$$
and for some $c,C>0$,
$$
|u_\vr(x)|\leq C\exp\Big(-\frac{c}{\vr}|x-x_\vr|\Big).
$$
\item[$(b)$] Setting $v_\vr(x):=u_\vr(\vr x+x_\vr)$, for any sequence $x_\vr\to x_0$, $\vr\to0$, $v_\vr$ converges in $H^{1}(\R^3)$ to a ground state solution $v$ of
$$
-\Delta v +\kappa^2_{\min}v  =\left(\int_{\mathbb{R}^3}\frac{|v(y)|^{6-\mu}+F(v(y))}{|x-y|^{\mu}}dy\right)\Big(|v|^{4-\mu}v+ \frac{1}{6-\mu}f(v)\Big).
$$
\end{itemize}
\end{thm}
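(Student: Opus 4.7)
The plan is to mimic the strategy already set up for Theorem \ref{Result1}, but now with the penalization acting on the linear potential $V$ in the spirit of Del Pino--Felmer and Rabinowitz, while coupling it with the nonlocal penalization technique of Moroz--Van Schaftingen \cite{MS3} needed to handle the critical Hardy--Littlewood--Sobolev term. After the rescaling $v(x):=u(\vr x)$, equation \eqref{SCP2} becomes
\begin{equation*}
-\Delta v + V(\vr x)v =\left(\int_{\R^{3}}\frac{|v(y)|^{6-\mu}+F(v(y))}{|x-y|^{\mu}}\,dy\right)\Big(|v|^{4-\mu}v+\tfrac{1}{6-\mu}f(v)\Big)\quad\text{in }\R^{3},
\end{equation*}
with associated functional $J_{\vr}$ on $H^{1}(\R^{3})$. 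Fix a small neighborhood $\Lambda\Supset\cv$ such that $V(x)<\tfrac{1}{2}(\kappa_{\min}+\kappa_{\infty})$ on $\Lambda$. I would replace the nonlinearity outside $\Lambda/\vr$ by a truncated version that preserves the variational framework but suppresses the critical growth there, producing a modified functional $\tilde J_{\vr}$ that satisfies a local Palais--Smale condition; this is the analogue of the penalization in the proof of Theorem \ref{Result1}, adapted to the setting $Q\equiv 1$ with variable $V$.

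Next I would produce a mountain-pass critical point $v_{\vr}$ of $\tilde J_{\vr}$ at level $c_{\vr}$. Let $c_{\kappa}$ denote the ground state level of the autonomous problem \eqref{A} with $\nu=\tau=1$ and parameter $\kappa$, which is well defined thanks to Theorem \ref{AQ}. A monotonicity argument in $\kappa$ (scaling test functions and using $(f_{2})$) yields $c_{\kappa_{\min}}<c_{\kappa_{\infty}}$. Then, plugging a ground state of \eqref{A} at $\kappa=\kappa_{\min}$, centered at a point of $\cv$ and cut off, into the mountain-pass geometry gives the upper estimate $\limsup_{\vr\to 0}c_{\vr}\le c_{\kappa_{\min}}$. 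Combined with the penalization (which rules out concentration in the complement of $\Lambda$) and a standard nonlocal concentration--compactness analysis, this forces the maxima $x_{\vr}$ of $v_{\vr}$ to satisfy $\vr x_{\vr}\to x_{0}\in\ov{\Lambda}$ after extracting a subsequence, and the shifted sequence $w_{\vr}(x):=v_{\vr}(x+x_{\vr})$ to converge in $H^{1}(\R^{3})$ to a nontrivial solution $v$ of
\begin{equation*}
-\Delta v+V(x_{0})v=\left(\int_{\R^{3}}\frac{|v(y)|^{6-\mu}+F(v(y))}{|x-y|^{\mu}}\,dy\right)\Big(|v|^{4-\mu}v+\tfrac{1}{6-\mu}f(v)\Big).
\end{equation*}
The strict inequality $c_{\kappa_{\min}}<c_{\kappa_{\infty}}$ combined with the lower semicontinuity of the limit energy along the concentration point then forces $V(x_{0})=\kappa_{\min}$ and $v$ to be a ground state, which proves (b) and $\dist(x_{\vr},\cv)\to 0$.

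To close part (a), I would remove the penalization: uniform $L^{\infty}$ bounds of $v_{\vr}$ via a Moser iteration adapted to the HLS nonlinearity (exploiting the control of the convolution by $|v_{\vr}|_{2\cdot 3/(3-\mu)}^{6-\mu}$), together with the exponential decay of the limit ground state, imply $v_{\vr}(x-x_{\vr})\le\eta$ outside a large ball whenever $\vr$ is small; by choice of the truncation level this makes $v_{\vr}$ a genuine solution of the unpenalized problem. The exponential decay $|u_{\vr}(x)|\le C\exp(-c|x-x_{\vr}|/\vr)$ is finally obtained by constructing a supersolution of the form $C e^{-c|x|}$ for $-\Delta w+\tfrac{1}{2}\kappa_{\min} w=0$ and applying a comparison principle outside a ball centered at $x_{\vr}$, using that the right-hand side of the rescaled equation decays faster than $|v_{\vr}|$ thanks to the decay of the convolution away from $x_{\vr}$.

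The main obstacle, as in Theorem \ref{AQ}, is the critical nonlocal growth: one must ensure that $c_{\vr}$ stays strictly below the threshold above which Palais--Smale sequences lose compactness (the HLS critical level governed by the sharp constant $C(\mu)$). This is exactly where the lower bound $F(u)\ge c_{1}|s|^{\zeta}$ with $5-\mu<\zeta<6-\mu$ from $(f_{1})$ is crucial: a Brezis--Nirenberg type test function computation on a concentrating family of HLS extremals shows that the subcritical contribution of $F$ strictly decreases the mountain-pass level below the critical threshold, and this estimate must be carried through the penalization and the limit $\vr\to 0$ uniformly in $\vr$.
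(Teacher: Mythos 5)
Your proposal is mathematically reasonable but follows a genuinely different route from the paper. You propose a Del Pino--Felmer / Moroz--Van Schaftingen \emph{penalization}: modifying the nonlinearity outside a small neighborhood $\Lambda$ of $\mathcal{V}$, building a modified functional $\tilde J_\varepsilon$ with a gained compactness, and then removing the penalization afterwards via uniform decay estimates. The paper, by contrast, never penalizes. It exploits the Rabinowitz-type structure of $(V)$ directly: the splitting Lemma \ref{splitting} together with the key Lemma \ref{PSlemma:1} shows that any $(PS)_c$ sequence for $J_\varepsilon$ below the HLS threshold $\frac{5-\mu}{2(6-\mu)}S_{H,L}^{\frac{6-\mu}{5-\mu}}$ either converges strongly or loses at least $m_{\kappa_\infty,1,1}$ of energy, and Lemma \ref{PS}/Corollary \ref{PSlemma:N} record the resulting $(PS)_c$ condition for $c<m_{\kappa_\infty,1,1}$; the existence then follows in Theorem \ref{GS} since the mountain-pass level $m_\varepsilon$ falls below $m_{\kappa_\infty,1,1}$ because $m_{\kappa_{\min},1,1}<m_{\kappa_\infty,1,1}$ (Lemma \ref{CO}), and concentration/multiplicity use the barycenter map $\beta_\varepsilon$ and the sets $\hat{\mathcal{N}}_\varepsilon$ rather than a penalization removal step. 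Your route would buy applicability to merely local minima of $V$ (the natural setting for penalization), whereas the paper's is simpler and more direct for the global Rabinowitz condition $(V)$ actually assumed. Two cautions on your proposal: (i) you assert the penalized functional "satisfies a local Palais--Smale condition," but with a critical Hardy--Littlewood--Sobolev term this can only hold below the Brezis--Nirenberg-type threshold, and you need to verify that the truncated \emph{nonlocal} term still controls the double integrals arising from mass escaping $\Lambda/\varepsilon$ --- this is where \cite{MS3} is subcritical and an adaptation is really needed rather than just cited; (ii) the removal of the penalization must be shown to be compatible with the critical growth, i.e. you need uniform-in-$\varepsilon$ $L^\infty$ decay for the truncated solutions before you know they solve the original problem, and this uniform decay itself rests on the same regularity machinery (via $\sup_n|\int_{\R^3}\widetilde Q G(v_n)/|x-y|^\mu dy|_\infty<\infty$) that the paper develops in Lemma \ref{0AI} --- so it should be stated, not glossed over. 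None of these is a fatal flaw, but they are the points where your blind outline is weakest relative to the paper's explicit handling.
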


\noindent
The multiplicity of solutions for the nonlocal problem can be characterized by the Lusternik-Schnirelman category of
the sets $\mathcal{V}$ and $\mathcal{V}_\delta$ defined by
 $$
\mathcal{V}_\delta=\{x\in\R^3:\dist(x,\mathcal{V})\leq\delta\},\  \hbox{for}\  \delta>0.
$$

\begin{thm}[Multiplicity of solutions]
	\label{Result3}
Suppose that the nonlinearity $f$ satisfies $(f_1)$-$(f_3)$ and the potential function $V(x)$ satisfies condition $(V)$. Then, for any $\delta>0$, there exists $\varepsilon_\delta$ such that equation \eqref{SCP2} has at least $\cat_{\mathcal{V}_\delta}(\mathcal{V})$ positive solutions, for any $0<\varepsilon<\varepsilon_\delta$. Moreover,  let $u_\vr$ denotes one of these positive solutions with $\eta_\vr\in\R^3$ its global maximum. Then
$$
\lim_{\varepsilon\to 0}V(\eta_\varepsilon)=\kappa_{\min}.
$$
\end{thm}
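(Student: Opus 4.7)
The strategy is the Benci--Cerami / Lusternik--Schnirelman machinery, building on the compactness and test-function technology already developed for Theorems \ref{AQ} and \ref{Result2}. Work on $H^1(\R^3)$ with the $\vr$-dependent norm
$$
\|u\|_\vr^2 = \int_{\R^3}\bigl(\vr^2|\nabla u|^2 + V(x)\,u^2\bigr)\,dx,
$$
introduce the energy $I_\vr$ associated with \eqref{SCP2}, its Nehari manifold $\mathcal{N}_\vr$, and the ground-state level $c_\vr:=\inf_{\mathcal{N}_\vr} I_\vr$. From Theorem \ref{Result2} one already has $c_\vr\to m_0$, where $m_0$ denotes the ground-state level of the autonomous problem corresponding to $V\equiv\kappa_{\min}$, together with a local $(PS)_c$ property for $I_\vr|_{\mathcal{N}_\vr}$ valid below a threshold $\Lambda$ determined by the sharp constant $C(\mu)$ of Proposition \ref{HLS} and the Sobolev best constant.

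Set up the two standard maps. Fix $\delta>0$ and a smooth cutoff $\chi$ with $\chi\equiv1$ on $B_{\delta/2}(0)$, $\chi\equiv0$ outside $B_\delta(0)$. Let $w$ be the positive ground state of the limit problem provided by Theorem \ref{AQ}; Theorem \ref{Result2}(a) guarantees the exponential decay of $w$. For $y\in\mathcal{V}$ set
$$
\Psi_{\vr,y}(x):=\chi(\vr x-y)\,w\!\left(\frac{\vr x-y}{\vr}\right),\qquad \Phi_\vr(y):=t_{\vr,y}\,\Psi_{\vr,y},
$$
with $t_{\vr,y}>0$ the unique Nehari projection. The change of variable $z=x-y/\vr$, combined with $V(y)=\kappa_{\min}$ and the decay of $w$, yields
$$
\lim_{\vr\to 0}\,\sup_{y\in\mathcal{V}}\,|I_\vr(\Phi_\vr(y))-m_0|=0.
$$
Choose $R>0$ with $\mathcal{V}_\delta\subset B_R(0)$, a continuous retraction $\rho:\R^3\to\overline{B_R(0)}$, and define the barycenter
$$
\beta_\vr(u):=\frac{\int_{\R^3}\rho(\vr x)\,|u(x)|^{6-\mu}\,dx}{\int_{\R^3}|u(x)|^{6-\mu}\,dx}.
$$
The decisive lemma asserts: for every sequence $u_n\in\mathcal{N}_{\vr_n}$ with $\vr_n\to0$ and $I_{\vr_n}(u_n)\to m_0$, one has $\dist(\beta_{\vr_n}(u_n),\mathcal{V})\to0$. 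It is proved by concentration: the rescaled $v_n(x):=u_n(\vr_n x+y_n)$ converges to a nontrivial solution of the limit equation with coefficient $V(y_0)$, and minimality of $m_0$ combined with hypothesis $(V)$ forces $V(y_0)=\kappa_{\min}$, i.e.\ $y_0\in\mathcal{V}$.

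Combining the two estimates, choose $\theta>0$ so small that $m_0+\theta<\Lambda$ and $\beta_\vr$ sends $\widetilde{\mathcal{N}}_\vr:=\{u\in\mathcal{N}_\vr:I_\vr(u)\le m_0+\theta\}$ into $\mathcal{V}_\delta$ for all small $\vr$. Then $\Phi_\vr(\mathcal{V})\subset\widetilde{\mathcal{N}}_\vr$ and $\beta_\vr\circ\Phi_\vr$ is homotopic in $\mathcal{V}_\delta$ to the inclusion $\mathcal{V}\hookrightarrow\mathcal{V}_\delta$, whence
$$
\cat_{\widetilde{\mathcal{N}}_\vr}(\widetilde{\mathcal{N}}_\vr)\;\ge\;\cat_{\mathcal{V}_\delta}(\mathcal{V}).
$$
Since $I_\vr|_{\mathcal{N}_\vr}$ satisfies $(PS)_c$ for $c\le m_0+\theta<\Lambda$, the abstract Lusternik--Schnirelman theorem yields at least $\cat_{\mathcal{V}_\delta}(\mathcal{V})$ critical points of $I_\vr|_{\mathcal{N}_\vr}$, hence that many positive solutions of \eqref{SCP2}. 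For any such solution $u_\vr$ with maximum point $\eta_\vr$, the rescaling $v_\vr(x)=u_\vr(\vr x+\eta_\vr)$ converges to a ground state of an autonomous problem with coefficient $V_0:=\lim V(\eta_\vr)$, and optimality at level $m_0$ combined with $(V)$ forces $V_0=\kappa_{\min}$; this is exactly the profile / exponential-decay argument of Theorem \ref{Result2}(a)--(b).

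\textbf{Main obstacle.} The crux is securing the strict inequality $m_0+\theta<\Lambda$ at the minimax levels created by $\Phi_\vr$. Critical Hardy--Littlewood--Sobolev growth allows compactness to fail through bubbling along HLS extremals; the subcritical perturbation encoded by $\zeta>5-\mu$ in $(f_1)$ is designed precisely to defeat this, through a nonlocal Brezis--Nirenberg-type test-function computation that strictly lowers the mountain-pass energy. Once this energy gap is in place, the barycenter lemma and the resulting LS bookkeeping run in a fairly standard fashion.
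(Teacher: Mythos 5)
Your overall plan coincides with the paper's: work with the rescaled functional $J_\vr$ on $E=H^1(\R^3)$, build the test functions $\Psi_{\vr,y}=\eta(|\vr x-y|)w(\cdot)$ with $w$ the autonomous ground state from Theorem~\ref{AQ}, project onto $\mathcal N_\vr$ to form $\Pi_\vr$, set up a barycenter $\beta_\vr$, prove the two Benci--Cerami lemmas (energy of $\Pi_\vr$ converges uniformly to $m_{\kappa_{\min},1,1}$, and barycenters of low-energy Nehari elements land near $\mathcal V_\delta$), and then read off $\cat_{\mathcal V_\delta}(\mathcal V)$ critical points from Lusternik--Schnirelman theory. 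The choice of weight in the barycenter is cosmetically different (you use $|u|^{6-\mu}$, the paper uses $|u|^2$), but both work. The concentration argument for $V(\eta_\vr)\to\kappa_{\min}$ is also what the paper does.

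The one point you have wrong — and it is the crux, not a side issue — is the identification of the compactness threshold $\Lambda$. You state that $\Lambda$ is ``determined by the sharp constant $C(\mu)$ of Proposition \ref{HLS} and the Sobolev best constant,'' i.e.\ you attribute the restriction purely to critical bubbling, and you claim in the ``Main obstacle'' paragraph that a Brezis--Nirenberg-type test-function computation alone secures $m_0+\theta<\Lambda$. That is not how the paper's $(PS)$ machinery (Lemma~\ref{PSlemma:1}, Lemma~\ref{PS}, Corollary~\ref{PSlemma:N}) works: the Palais--Smale condition for $J_\vr|_{\mathcal N_\vr}$ holds precisely for $c<m_{\kappa_\infty,1,1}$, because the non-compact mode that must be excluded is \emph{escape to spatial infinity}, whose energy cost is $m_{\kappa_\infty,1,1}$, not the HLS bubbling level $\frac{5-\mu}{2(6-\mu)}S_{H,L}^{(6-\mu)/(5-\mu)}$. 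The Brezis--Nirenberg estimate (Lemma~\ref{EMP1}) is only a prerequisite, ensuring that all the relevant mountain-pass levels $m_{\kappa,\nu,\tau}$ sit strictly below the bubbling threshold so that the splitting dichotomy in Lemma~\ref{PSlemma:1} is available; by itself it does not bound the LS minimax levels below $m_{\kappa_\infty,1,1}$. What actually gives the room $m_{\kappa_{\min},1,1}+\theta<m_{\kappa_\infty,1,1}$ for small $\theta$ is the \emph{strict} monotonicity of Lemma~\ref{CO} applied with $\kappa_{\min}<\kappa_\infty$, which in turn is exactly why hypothesis $(V)$ demands $\kappa_{\min}<\kappa_\infty$. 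Your proposal never invokes this strict inequality, so as written it does not justify that the LS critical values you produce are in the compactness range. Correct the statement of $\Lambda$, invoke Lemma~\ref{CO} with $(V)$, and the argument closes. (A minor slip: the exponential decay of $w$ is Lemma~\ref{EDP}, not Theorem~\ref{Result2}(a).)
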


\vskip3pt
\noindent
{\bf Basic notations:} \\
\noindent $\bullet$ $C$, $C_i$ denote positive constants.\\
\noindent $\bullet$ $B_R$ denote the open ball centered at the origin with
radius $R>0$. \\
\noindent $\bullet$ $C_0^{\infty}(\R^3)$ denotes  the space of the functions
infinitely differentiable with compact support in $\R^3$. \\
\noindent $\bullet$ For a mensurable function $u$, we denote by $u^{+}$ and $u^{-}$ its positive and negative parts respectively, given by
$$
u^{+}(x)=\max\{u(x),0\} \quad \mbox{and} \quad u^{-}(x)=\min\{u(x),0\}.
$$

\noindent $\bullet$ $E:=H^{1}(\R^3)$ is the usual Sobolev space with norm
   $$
    \|u\|:=\left(\int_{\R^3}(|\nabla u|^2+ |u|^2)dx\right)^{1/2}.
  $$
\noindent $\bullet$ $L^s(\R^3)$, for $1 \leq s <\infty$,
denotes the Lebesgue space with the norms
$$
| u |_s:=\Big(\int_{\R^3}|u|^sdx\Big)^{1/s}.
$$
\noindent $\bullet$ From the assumption on $V$, it follows that
 $$
    \|u\|_{\varepsilon}:=\left(\int_{\R^3}(|\nabla u|^2+ V(\varepsilon x)|u|^2)dx\right)^{1/2}
 $$
is an equivalent norm on $E$. \\

\noindent $\bullet$ \, Let $X$ be a real Hilbert space and $I:X \to \R$ be a functional of class ${C}^1$.
 We say that $(u_n)\subset X$ is a  Palais-Smale ($(PS)$ for short) sequence at $c$ for $I$ if $(u_n)$ satisfies
$$
I(u_n)\to c \,\,\, \mbox{and} \,\,\,\, I'(u_n)\to0,  \,\,\, \mbox{as} \,\,\, n\to\infty.
$$
Moreover, $I$ satisfies the $(PS)$ condition at $c$, if any $(PS)$ sequence at $c$ possesses a convergent subsequence.

\section{Autonomous critical equation}
Since there are no existing results for the nonlocal Choquard equation with upper critical exponent in the whole space, then we are going to study firstly the existence and properties of the ground state solutions of the autonomous equation \eqref{A} which will play the role of limit problem for the equation \eqref{SCP1} and \eqref{SCP2}. Let
$
G(u)=\nu|u|^{6-\mu}+\tau F(u)
$
and
$
g(u)=\frac{dG(u)}{du}
$, then equation \eqref{A} can be rewritten as
\begin{equation*}
\displaystyle-\Delta u+\kappa u
=\frac{1}{6-\mu}\left(\int_{\mathbb{R}^3}\frac{G(u(y))}{|x-y|^{\mu}}dy\right)g(u)\hspace{4.14mm}\mbox{in}\hspace{1.14mm} \R^{3}.
\end{equation*}

\begin{Rem}\label{R1}\rm
Assumption $(f_2)$ with $\alpha>2$ implies the existence of constant $\theta>2$ such that  Ambrosetti-Rabinowitz condition for $g$ holds: for every $u\neq0$, $0<\theta G(u)\leq 2ug(u)$.
Moreover, we also assume that there is $\varsigma >0$ such that
$$
g'(s)s^2 -\left(1+\varsigma-\frac{\theta}{2}\right)g(s)s> 0  \,\,\,\ \forall s >0. \eqno{(f'_4)}
$$
\end{Rem}
\noindent
For all $u\in D^{1,2}(\mathbb{R}^3)$ we know
$$
\Big(\int_{\R^3}\int_{\R^3}\frac{|u(x)|^{6-\mu}|u(y)|^{6-\mu}}{|x-y|^{\mu}}dxdy\Big)^{\frac{1}{6-\mu}}\leq C(3,\mu)^{\frac{1}{6-\mu}}|u|_{6}^{2},
$$
where $C(3,\mu)$ is defined as in Proposition \ref{HLS}.
We use $S_{H,L}$ to denote the best constant defined by
\begin{equation}\label{S1}
S_{H,L}:=\displaystyle\inf\limits_{u\in D^{1,2}(\R^3)\backslash\{{0}\}}\ \ \frac{\displaystyle\int_{\R^3}|\nabla u|^{2}dx}{\left(\displaystyle\int_{\R^3}\int_{\R^3}\frac{|u(x)|^{6-\mu}|u(y)|^{6-\mu}}{|x-y|^{\mu}}dxdy\right)^{\frac{1}{6-\mu}}}.
\end{equation}

\noindent
From the comments above, we can easily draw the following conclusion.

\begin{lem}[Optimizers for $S_{H,L}$]\cite{GY}
	\label{ExFu}
The constant $S_{H,L}$ defined in \eqref{S1} is achieved if and only if
$$
u=C\left(\frac{b}{b^{2}+|x-a|^{2}}\right)^{\frac{1}{2}} ,
$$
where $C>0$ is a fixed constant, $a\in \mathbb{R}^{3}$ and $b>0$ are parameters. Moreover,
\begin{equation}
\label{legame}
S_{H,L}=\frac{S}{C(3,\mu)^{\frac{1}{6-\mu}}},
\end{equation}
where $S$ is the best Sobolev constant of the continuous embedding $D^{1,2}(\mathbb{R}^3) \hookrightarrow L^{2^{*}}(\mathbb{R}^3).$ In particular, let
$$
U(x)=\frac{C_0}{(1+|x|^{2})^{\frac{1}{2}}}
$$
be a minimizer for $S$ which satisfies $-\Delta U=U^5$, then
\begin{equation*}
\tilde{U}(x)=S^{\frac{(\mu-3)}{4(5-\mu)}}C(3,\mu)^{\frac{-1}{2(5-\mu)}}U(x),
\end{equation*}
is the unique  minimizer for $S_{H,L}$ that satisfies
$$
-\Delta u=\left(\int_{\R^3}\frac{|u(y)|^{6-\mu}}{|x-y|^{\mu}}dy\right)|u|^{4-\mu}u\ \ \   \hbox{in}\ \ \ \R^3.
$$
\end{lem}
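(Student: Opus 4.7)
The strategy is to sandwich $S_{H,L}$ between two sharp inequalities whose extremizers coincide: the Hardy-Littlewood-Sobolev inequality of Proposition \ref{HLS} and the Sobolev inequality $S|u|_6^2\le\int|\nabla u|^2$.

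For the lower bound $S_{H,L}\geq S/C(3,\mu)^{1/(6-\mu)}$, given $u\in D^{1,2}(\R^3)\setminus\{0\}$, Sobolev embedding places $|u|^{6-\mu}\in L^{6/(6-\mu)}(\R^3)$, so Proposition \ref{HLS} applied with $t=r=6/(6-\mu)$ and $f=h=|u|^{6-\mu}$ yields
$$
\int_{\R^3}\!\!\int_{\R^3}\frac{|u(x)|^{6-\mu}|u(y)|^{6-\mu}}{|x-y|^\mu}\,dx\,dy\,\le\, C(3,\mu)\,|u|_6^{2(6-\mu)}.
$$
Taking the $1/(6-\mu)$-th root and chaining with the Sobolev inequality in the Rayleigh quotient \eqref{S1} gives the lower bound. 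For the reverse inequality and for the full classification of optimizers, I would invoke the rigidity clause in Proposition \ref{HLS}: equality in HLS with this choice of $f,h$ forces $|u|^{6-\mu}=A(b^2+|x-a|^2)^{-(6-\mu)/2}$, i.e.\ $|u|$ proportional to the Aubin-Talenti bubble $(b/(b^2+|x-a|^2))^{1/2}$. These are also the Sobolev extremals, so both inequalities saturate simultaneously on the stated three-parameter family, which produces the matching upper bound together with the if-and-only-if characterization.

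To single out the normalized $\tilde U$ satisfying the Euler-Lagrange equation, I would write the Lagrange-multiplier form at any critical point of the Rayleigh quotient. Letting $D(u)^{6-\mu}$ denote the double integral, a direct differentiation gives
$$
-\Delta u\,=\,\frac{S_{H,L}}{D(u)^{5-\mu}}\Bigl(\int_{\R^3}\frac{|u(y)|^{6-\mu}}{|x-y|^\mu}\,dy\Bigr)|u|^{4-\mu}u,
$$
so the coefficient collapses to $1$ precisely when $D(u)^{5-\mu}=S_{H,L}$. Starting from $U$ with $-\Delta U=U^5$, the identity $\int|\nabla U|^2=|U|_6^6$ combined with the definition of $S$ gives $|U|_6^2=S^{1/2}$ and $\int|\nabla U|^2=S^{3/2}$; since $U$ is also a minimizer for $S_{H,L}$, this forces $D(U)=S^{3/2}/S_{H,L}$. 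Solving $(t^2 D(U))^{5-\mu}=S_{H,L}$ for the homothety factor $t$ and substituting $S_{H,L}=S/C(3,\mu)^{1/(6-\mu)}$ from \eqref{legame}, the exponents collapse, with common denominator $4(5-\mu)$, to $t=S^{(\mu-3)/(4(5-\mu))}C(3,\mu)^{-1/(2(5-\mu))}$, which matches the statement.

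The main obstacle is the exponent bookkeeping in this last step, which must consistently combine the bihomogeneity $D(tu)=t^2D(u)$, the degree $11-2\mu$ of the nonlocal nonlinearity, and the power $1/(6-\mu)$ inherited from HLS. Aside from this, the two preceding steps are formal consequences of the sharp inequalities; in particular, the "only if" direction of attainability is forced by the rigidity clause in Proposition \ref{HLS}, whose extremals automatically coincide with the Aubin-Talenti profiles.
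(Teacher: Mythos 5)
Your proposal follows the same strategy as the paper: the lower bound via HLS plus Sobolev, the upper bound and the classification of extremizers via the rigidity (equality) clause of Proposition \ref{HLS} combined with the coincidence of the HLS and Sobolev extremals, and then a scaling argument for the normalized $\tilde U$. The paper leaves the last normalization step to "a simple calculation"; your Euler--Lagrange and bihomogeneity bookkeeping, $D(tU)=t^{2}D(U)$, $D(\tilde U)^{5-\mu}=S_{H,L}$, with $|U|_6^2=S^{1/2}$, $\int|\nabla U|^2=S^{3/2}$, and \eqref{legame}, does produce exactly $t=S^{(\mu-3)/(4(5-\mu))}C(3,\mu)^{-1/(2(5-\mu))}$, so that step is also correct.
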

\begin{proof}
We sketch the proof for the completeness of the paper. By the Hardy-Littlewood-Sobolev inequality, we have
$$
S_{H,L}\geq\frac{1}{C(3,\mu)^{\frac{1}{6-\mu}}}\inf\limits_{u\in D^{1,2}(\R^3)\backslash\{{0}\}}\ \ \frac{\displaystyle\int_{\R^3}|\nabla u|^{2}dx}{|u|_{6}^{2}}
=\frac{S}{C(3,\mu)^{\frac{1}{6-\mu}}}.
$$
On the other hand, the equality in the Hardy-Littlewood-Sobolev inequality \eqref{HLS1} holds if and only if
$$
f(x)=h(x)=C\left(\frac{b}{b^{2}+|x-a|^{2}}\right)^{\frac{6-\mu}{2}},
 $$
where $C>0$ is a fixed constant, $a\in \mathbb{R}^{3}$ and $b\in(0,\infty)$ are parameters. Thus
$$
\Big(\int_{\R^3}\int_{\R^3}\frac{|u(x)|^{6-\mu}|u(y)|^{6-\mu}}{|x-y|^{\mu}}dxdy\Big)^{\frac{1}{6-\mu}}= C(3,\mu)^{\frac{1}{6-\mu}}|u|_{6}^{2},
$$
if and only if
$$
\bar u(x)=C\left(\frac{b}{b^{2}+|x-a|^{2}}\right)^{\frac{1}{2}}.
 $$
Then, by the definition of $S_{H,L}$, we know
$$
S_{H,L}\leq \frac{\displaystyle\int_{\R^3}|\nabla \bar u(x)|^{2}dx}{\left(\displaystyle\int_{\R^3}\int_{\R^3}\frac{|\bar u(x)|^{6-\mu}|\bar u(y)|^{6-\mu}}{|x-y|^{\mu}}dxdy\right)^{\frac{1}{6-\mu}}}
=\frac{1}{C(3,\mu)^{\frac{1}{6-\mu}}}\frac{\displaystyle\int_{\R^3}|\nabla \bar u|^{2}dx}{|\bar u|_{6}^{2}}.
$$
It is well-known that  $\bar u$ is a minimizer for $S$, thus we get
$$
S_{H,L}\leq \frac{S}{C(3,\mu)^{\frac{1}{6-\mu}}}.
$$
From the arguments above, we know that $S_{H,L}$ on $\bar u$ and \eqref{legame} holds.
By a simple calculation, we know
\begin{equation*}
\tilde{U}(x)=S^{\frac{(\mu-3)}{4(5-\mu)}}C(3,\mu)^{\frac{-1}{2(5-\mu)}}U(x)\\
\end{equation*}
is the unique  minimizer for $S_{H,L}$ that satisfies
$$
-\Delta u=\left(\int_{\R^3}\frac{|u(y)|^{6-\mu}}{|x-y|^{\mu}}dy\right)|u|^{4-\mu}u\ \ \ \hbox{in $\R^3$},
$$
and, moreover,
$$
\int_{\R^3}|\nabla \tilde{U}|^{2}dx=\int_{\R^3}\int_{\R^3}\frac{|\tilde{U}(x)|^{6-\mu}|\tilde{U}(y)|^{6-\mu}}{|x-y|^{\mu}}dxdy=S_{H,L}^{\frac{6-\mu}{5-\mu}},
$$
which concludes the proof.
\end{proof}

\noindent
Next, repeat the proof in \cite{GY}, we have one more important information about the best constant $S_{H,L}$.
\begin{lem}
 For every open subset $\Omega\subset \R^3$, we have
\begin{equation*}
S_{H,L}(\Omega):=\displaystyle\inf\limits_{u\in D_{0}^{1,2}(\Omega)\backslash\{{0}\}}\ \ \frac{\displaystyle\int_{\Omega}|\nabla u|^{2}dx}{\left(\displaystyle\int_{\Omega}\int_{\Omega}\frac{|u(x)|^{6-\mu}|u(y)|^{6-\mu}}{|x-y|^{\mu}}dxdy\right)^{\frac{1}{6-\mu}}}=S_{H,L},
\end{equation*}
and $S_{H,L}(\Omega)$ is never achieved except for $\Omega=\R^3$.
\end{lem}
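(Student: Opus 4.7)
The plan is to prove the two assertions separately, starting with the equality $S_{H,L}(\Omega)=S_{H,L}$ and then deriving the non-attainment claim from the explicit form of the extremals given in Lemma \ref{ExFu}.

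For the inequality $S_{H,L}(\Omega)\geq S_{H,L}$, I would simply extend any $u\in D_0^{1,2}(\Omega)$ by zero to a function in $D^{1,2}(\R^3)$; since both the Dirichlet integral and the double integral are unchanged by this extension, the Rayleigh quotient computed in $\Omega$ equals the one in $\R^3$, and taking infima gives the bound. For the reverse inequality $S_{H,L}(\Omega)\leq S_{H,L}$, the key observation is the scaling invariance of the Rayleigh quotient: if $u\in D^{1,2}(\R^3)$ and one defines $u_{a,\lambda}(x)=\lambda^{1/2}u(\lambda(x-a))$, a direct change of variables shows that both $\int|\nabla u_{a,\lambda}|^{2}dx$ and the nonlocal double integral are invariant under $(a,\lambda)$. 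I would fix $a\in\Omega$ together with $r>0$ so that $B_{2r}(a)\subset\Omega$, pick a cutoff $\varphi\in C_c^{\infty}(B_{2r}(a))$ with $\varphi\equiv 1$ on $B_r(a)$, and consider the test functions $v_{\lambda}(x):=\varphi(x)\lambda^{1/2}\tilde U(\lambda(x-a))\in D_0^{1,2}(\Omega)$ for $\lambda\to\infty$, where $\tilde U$ is the extremal from Lemma \ref{ExFu}. A standard localization/concentration computation (splitting $|\nabla v_\lambda|^{2}$ into the rescaled bulk term plus cutoff error terms of lower order in $\lambda$, and using Hardy-Littlewood-Sobolev to bound the error produced by $(1-\varphi)$ in the nonlocal integral) then shows
\[
\int_{\Omega}|\nabla v_{\lambda}|^{2}\,dx\to \int_{\R^3}|\nabla\tilde U|^{2}\,dx,\qquad \iint_{\Omega\times\Omega}\frac{|v_\lambda(x)|^{6-\mu}|v_\lambda(y)|^{6-\mu}}{|x-y|^{\mu}}\,dx\,dy\to \iint_{\R^3\times\R^3}\frac{|\tilde U(x)|^{6-\mu}|\tilde U(y)|^{6-\mu}}{|x-y|^{\mu}}\,dx\,dy,
\]
so the Rayleigh quotient of $v_\lambda$ converges to $S_{H,L}$ and the opposite inequality follows.

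For the non-attainment claim, I would argue by contradiction: suppose $\Omega\subsetneq\R^3$ and some $u_{0}\in D_{0}^{1,2}(\Omega)\setminus\{0\}$ achieves $S_{H,L}(\Omega)=S_{H,L}$. Extending $u_0$ by zero gives an extremal of $S_{H,L}$ on $\R^3$. Replacing $u_0$ by $|u_0|$ if necessary (which does not increase the Dirichlet integral and preserves the nonlocal term), Lemma \ref{ExFu} classifies all such extremals: they are precisely the bubbles $C\bigl(b/(b^{2}+|x-a|^{2})\bigr)^{1/2}$, which are strictly positive everywhere on $\R^3$. Since $\R^3\setminus\Omega$ is nonempty and the zero extension of $u_0$ must vanish there, we obtain a contradiction, proving non-attainment.

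The main obstacle in this plan is the concentration estimate in the second step, specifically controlling the nonlocal truncation error: the piece of the double integral where $\varphi$ differs from $1$ must be shown to vanish in the limit $\lambda\to\infty$. I would handle this by splitting the domain of integration according to whether each variable lies in $B_r(a)$ or in the annulus $B_{2r}(a)\setminus B_r(a)$, applying the Hardy-Littlewood-Sobolev inequality on each piece, and using the fact that on the annulus $\lambda^{1/2}\tilde U(\lambda(\cdot-a))$ has $L^{6}$-norm tending to $0$ by the explicit decay of $\tilde U$ — the concentration rescaling pushes almost all the mass into $B_r(a)$. Everything else (the computation for $|\nabla v_\lambda|^{2}$, and the classification argument in the third step) reduces to standard manipulations together with the already-established Lemma \ref{ExFu}.
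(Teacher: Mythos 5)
Your proposal is correct and establishes all three assertions, but the middle step follows a genuinely different route from the paper. For the upper bound $S_{H,L}(\Omega)\leq S_{H,L}$, you take the explicit optimizer $\tilde U$ from Lemma~\ref{ExFu}, cut it off near a point $a\in\Omega$, and run a concentration argument $\lambda\to\infty$, which forces you to estimate the cutoff error in both the Dirichlet integral and the nonlocal double integral. The paper instead starts from a minimizing sequence $(u_n)\subset C_0^\infty(\R^3)$ and observes that for each $n$ one can simply translate and dilate $u_n$ so that its (already compact) support lands inside $\Omega$; since both $\int|\nabla u|^2$ and $\iint |u(x)|^{6-\mu}|u(y)|^{6-\mu}|x-y|^{-\mu}\,dx\,dy$ are invariant under $u\mapsto \tau^{1/2}u(\tau\cdot+y)$, each rescaled function has exactly the same Rayleigh quotient and no remainder terms appear at all. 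Your bubble-concentration computation is valid and more explicit, but the paper's density-plus-exact-rescaling argument avoids every error estimate, so it is the cleaner of the two; conversely your version does not rely on density of $C_0^\infty$ in $D^{1,2}$. On non-attainment the two arguments are essentially the same idea, though you spell out two pieces that the paper treats as obvious: the reduction to $|u_0|$ (needed since the classification in Lemma~\ref{ExFu} is stated for the positive extremals) and the contradiction with the strict positivity of the bubbles on $\R^3\setminus\Omega$. Both of those refinements are correct and, if anything, make the argument more complete than the paper's one-line justification.
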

\begin{proof}
Clearly $S_{H,L}\leq S_{H,L}(\Omega)$ by $D_{0}^{1,2}(\Omega)\subset D^{1,2}(\R^3)$. Let $(u_{n})\subset C_{0}^{\infty}(\R^3)$ be a minimizing sequence for $S_{H,L}$. We make translations and dilations for $(u_{n})$ by choosing $y_{n}\in\R^3$ and $\tau_{n}>0$ such that
$$
u_{n}^{y_{n},\tau_{n}}(x):=\tau_n^{1/2}u_{n}(\tau_{n} x+y_{n})\in C_{0}^{\infty}(\Omega),
$$
which satisfies
$$
\int_{\R^3}|\nabla u_{n}^{y_{n},\tau_{n}}|^{2}dx=\int_{\R^3}|\nabla u_{n}|^{2}dx
$$
and
$$
\int_{\Omega}\int_{\Omega}\frac{|u_{n}^{y_{n},\tau_{n}}(x)|^{6-\mu}|u_{n}^{y_{n},\tau_{n}}(y)|^{6-\mu}}{|x-y|^{\mu}}dxdy
=\int_{\R^3}\int_{\R^3}\frac{|u_{n}(x)|^{6-\mu}|u_{n}(y)|^{6-\mu}}{|x-y|^{\mu}}dxdy.
$$
Hence $S_{H,L}(\Omega)\leq S_{H,L}$. Moreover $S_{H,L}(\Omega)$ is never achieved except for $\Omega=\R^3$,
since $\tilde{U}(x)$ is the only class of functions with equality in the  Hardy-Littlewood-Sobolev inequality.
\end{proof}


\noindent
The energy functional associated to equation \eqref{A} is defined by
$$
\aligned
\Phi_{\kappa,\nu,\tau}(u)=\frac{1}{2}\int_{\R^3}(|\nabla u|^{2}+\kappa|u|^{2})dx-\frac{1}{2(6-\mu)}\int_{\R^3}
\int_{\R^3}\frac{G(u(y))G(u(x))}
{|x-y|^{\mu}}dxdy.
\endaligned
$$
From the growth assumptions on $f$, the Hardy-Littlewood-Sobolev inequality
implies that $\Phi_{\kappa,\nu,\tau}$ is well defined on $E$ and belongs to $C^1$ with its derivative given by
$$
\langle\Phi'_{\kappa,\nu,\tau}(u), \varphi\rangle=\int_{\R^3}(\nabla u\nabla\varphi +\kappa u\varphi)dx -\frac{1}{6-\mu}\int_{\R^3}\int_{\R^3}\frac{G(u(y))g(u(x))\varphi(x)}
{|x-y|^{\mu}}dxdy,\quad\,\,\, \forall u, \varphi \in E.
$$
Therefore, the solutions of equation \eqref{A} correspond to critical points of the energy $\Phi_{\kappa,\nu,\tau}$.
Let us denote by ${\cal N}_{\kappa,\nu,\tau}$ the Nehari manifold associated to $\Phi_{\kappa,\nu,\tau}$ defined by
$\{u\in E:u\neq0,\langle\Phi'_{\kappa,\nu,\tau}(u), u\rangle=0\},$ namely
$$
{\cal N}_{\kappa,\nu,\tau}=\left\{u\in E\setminus\{0\}: \int_{\R^3} (|\nabla u|^{2}+\kappa |u|^{2})dx=\frac{1}{6-\mu}\int_{\R^3}\int_{\R^3}\frac{G(u(y))g(u(x))u(x)}{|x-y|^{\mu}}dxdy\right\},
$$
or equivalently
${\cal N}_{\kappa,\nu,\tau}=\left\{u\in E\setminus\{0\}: \Psi_{\kappa,\nu,\tau}(u)=0\right\},$
where we have set
$$
\Psi_{\kappa,\nu,\tau}(u):=\int_{\R^3} (|\nabla u|^{2}+\kappa |u|^{2})dx-\frac{1}{6-\mu}\int_{\R^3}\int_{\R^3}\frac{G(u(y))g(u(x))u(x)}{|x-y|^{\mu}}dxdy.
$$
From $(f_3)$, for each $u\in E\backslash\{0\}$, there is a unique $t=t(u)>0$ such that
 $$
 \Phi_{\kappa,\nu,\tau}(t(u)u)=\max_{s\geq0}\Phi_{\kappa,\nu,\tau}(su),\qquad
  t(u)u\in {\cal N}_{\kappa,\nu,\tau}.
 $$
Furthermore, there exists $\delta>0$ such that
\begin{equation} \label{alpha}
\|u\|\geq \delta,\qquad  \forall u\in {\cal N}_{\kappa,\nu,\tau}.
\end{equation}
By assumption  $(f'_4)$, there exists $\varrho>0$ such that
$$
\langle\Psi'_{\kappa,\nu,\tau}(u), u\rangle\leq-\varrho, \quad \forall u \in {\cal N}_{\kappa,\nu,\tau} .
$$
In fact, from $(f_2)$ and $(f'_4)$, a direct computation yields
$$
\aligned
\langle\Psi'_{\kappa,\nu,\tau}(u), u\rangle&=2\int_{\R^3} (|\nabla u|^{2}+\kappa |u|^{2})dx\\
&-\frac{1}{6-\mu}\int_{\R^3}\int_{\R^3}\frac{g(u(y))u(y)g(u(x))u(x)+G(u(y))g'(u(x))u^2(x)+G(u(y))g(u(x))u(x)}{|x-y|^{\mu}}dxdy \\
&\leq\int_{\R^3}\int_{\R^3}\frac{G(u(y))[(1-\frac{\theta}{2})g(u(x))u(x)-g'(u(x))u^2(x)]}{|x-y|^{\mu}}dxdy\\
&\leq-\varsigma\int_{\R^3}\int_{\R^3}\frac{G(u(y))g(u(x))u(x)}{|x-y|^{\mu}}dxdy\,.
\endaligned
$$
Therefore, if there exists a sequence $(u_n) \subset {\cal N}_{\kappa,\nu,\tau} $ such that $\langle\Psi'_{\kappa,\nu,\tau}(u_n), u_n\rangle\to0$, then
would we have
$$
\int_{\R^3}\int_{\R^3}\frac{G(u_n(y))g(u_n(x))u_n(x)}{|x-y|^{\mu}} dxdy\to 0,
$$
and consequently $\|u_n\|\to 0$, which contradicts \eqref{alpha}.
Therefore, ${\cal N}_{\kappa,\nu,\tau}$ defines a natural manifold and, as it can be readily checked, minimizing $\Phi_{\kappa,\nu,\tau}$ over ${\cal N}_{\kappa,\nu,\tau}$
generates critical point of $\Phi_{\kappa,\nu,\tau}$.
\vskip3pt
\noindent
To get existence of nontrivial solution by Mountain Pass theorem, we need to check that
$\Phi_{\kappa,\nu,\tau}$ satisfies the Mountain Pass Geometry. For simplicity,  we let
$\kappa=\nu=\tau=1$ in the sequel. The following lemma is a revised one of the corresponding version in \cite{AF}
and we sketch here for the convenience of the readers.

\begin{lem}\label{mountain:1}
The functional $\Phi_{1,1,1}$ satisfies the Mountain Pass Geometry, that is,
\begin{itemize}
  \item[$(1)$] There exist $\rho, \delta_0>0$ such that $\Phi_{1,1,1}|_{\partial B_{\rho}}\geq\delta_0$ for all $u\in \partial B_{\rho}=\{u\in E:\|u\|=\rho\}$;
  \item[$(2)$]  There are $r>0$ and $e$ with $\|e\|>r$ such that $\Phi_{1,1,1}(e)< 0$.
\end{itemize}
\end{lem}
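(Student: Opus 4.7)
The plan for part (1) is to bound the nonlocal quartic term by powers of $\|u\|$ strictly greater than $2$, so that the quadratic part $\tfrac{1}{2}\|u\|^{2}$ dominates near the origin. Integrating $(f_{1})$ gives $|F(s)|\leq C(|s|^{q}+|s|^{p})$, whence
$$
|G(u)|\leq |u|^{6-\mu}+C(|u|^{q}+|u|^{p}).
$$
Applying the Hardy-Littlewood-Sobolev inequality (Proposition \ref{HLS}) with the symmetric pair $t=r=6/(6-\mu)$ to each of the four summands obtained after expanding $G(u(x))G(u(y))$, the double integral is controlled by a sum of products of $L^{s}$-norms of $u$ with $s\in\{6,\ 6q/(6-\mu),\ 6p/(6-\mu)\}$. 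The hypothesis $(6-\mu)/3<q\leq p<6-\mu$ places each such $s$ in $[2,6]$, so the Sobolev embedding $H^{1}(\R^{3})\hookrightarrow L^{s}(\R^{3})$ delivers
$$
\Phi_{1,1,1}(u)\geq \tfrac{1}{2}\|u\|^{2}-C\big(\|u\|^{2(6-\mu)}+\|u\|^{2p}+\|u\|^{2q}\big).
$$
Since $q>(6-\mu)/3>1$, every exponent on the right strictly exceeds $2$, so $\Phi_{1,1,1}|_{\partial B_{\rho}}\geq \delta_{0}$ follows for suitably small $\rho,\delta_{0}>0$.

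For part (2), I would fix any nonnegative, nontrivial $u_{0}\in C_{0}^{\infty}(\R^{3})$ and study $t\mapsto \Phi_{1,1,1}(tu_{0})$ for $t>0$. Assumption $(f_{2})$ forces $F(s)>0$ for all $s>0$, hence $G(s)\geq s^{6-\mu}$ on $[0,\infty)$, and therefore
$$
\Phi_{1,1,1}(tu_{0})\leq \tfrac{t^{2}}{2}\|u_{0}\|^{2}-\tfrac{t^{2(6-\mu)}}{2(6-\mu)}\int_{\R^{3}}\!\int_{\R^{3}}\frac{u_{0}(x)^{6-\mu}u_{0}(y)^{6-\mu}}{|x-y|^{\mu}}\,dxdy.
$$
Because $2(6-\mu)>2$ (as $\mu<3$) and the double integral above is strictly positive, $\Phi_{1,1,1}(tu_{0})\to -\infty$ as $t\to +\infty$. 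Choosing $t$ large enough that $\|tu_{0}\|>\rho$ and $\Phi_{1,1,1}(tu_{0})<0$ yields the desired $e:=tu_{0}$.

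No serious obstacle arises; the only mildly fiddly step is the clean organization of the HLS estimate for the three non-diagonal cross-products in the expansion of $G(u(x))G(u(y))$. These can be handled either by applying HLS to each product separately and then invoking Young's inequality $2ab\leq a^{2}+b^{2}$ to reabsorb them into the diagonal bounds, or by first splitting the integrand pointwise with the same elementary inequality and then applying HLS only to the resulting symmetric pieces. In either case one reduces to the three diagonal terms already controlled above.
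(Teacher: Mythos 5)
Your proof of part (1) follows the same route as the paper: bound $G$ by $|u|^{6-\mu}+C(|u|^{q}+|u|^{p})$ via $(f_1)$, apply Hardy--Littlewood--Sobolev with the symmetric exponent $6/(6-\mu)$ and Sobolev embedding, and absorb the cross terms into the diagonal ones by Young's inequality, yielding $\Phi_{1,1,1}(u)\geq \tfrac12\|u\|^{2}-C(\|u\|^{2q}+\|u\|^{2p}+\|u\|^{2(6-\mu)})$; since $q>(6-\mu)/3>1$, all those exponents exceed $2$ and part (1) follows for small $\rho$. This is essentially identical to the paper (which only records the $q$ and $6-\mu$ terms but clearly has the same estimate in mind).

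Your proof of part (2) is correct but takes a genuinely different and somewhat more elementary route. The paper differentiates $\psi(t):=\Psi(tu_0/\|u_0\|)$, uses the Ambrosetti--Rabinowitz condition $(f_2)$ (via its consequence for $G$, cf.\ Remark 2.1) to obtain $\psi'(t)/\psi(t)\geq\alpha/t$, integrates to get $\Psi(su_0)\geq C\,s^{\alpha}$, and concludes from $\alpha>2$. You instead observe that $(f_2)$ already forces $F(s)>0$ for $s>0$, hence $G(s)\geq s^{6-\mu}$ on $[0,\infty)$, and pick a nonnegative $u_0$ so that the nonlocal term can be bounded below pointwise by the pure critical double integral, giving $\Phi_{1,1,1}(tu_0)\leq \tfrac{t^2}{2}\|u_0\|^2-C\,t^{2(6-\mu)}$ with $2(6-\mu)>2$. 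Your approach is cleaner in the present setting precisely because the critical term is built in and supplies superquadratic growth for free; the paper's monotonicity/integration argument via $(f_2)$ is the one that would survive in a purely subcritical version of $G$, so it is the more portable technique even though it is not strictly needed here. Both arguments are sound; the only point to flag is that your pointwise comparison between $G(tu_0(x))G(tu_0(y))$ and $t^{2(6-\mu)}u_0(x)^{6-\mu}u_0(y)^{6-\mu}$ relies on choosing $u_0\geq0$, which you do state.
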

\begin{proof}
(1). From the growth assumptions on $f$ and the Hardy-Littlewood-Sobolev inequality, we derive
$$
\aligned
\Phi_{1,1,1}(u) &\geq  \frac12\|u\|^2-C(\|u\|^{2q}+\|u\|^{2(6-\mu)}),
\endaligned
$$
then $(1)$ follows if  $\rho$ is small enough.
(2). Fixed $u_0 \in E\setminus\{0\}$, we set
$\psi(t):=\Psi\big(\frac{tu_0}{\|u_0\|}\big)>0,$ where
\begin{equation*}
\displaystyle \Psi(u)=\frac{1}{2(6-\mu)}\int_{\R^3}\int_{\R^3}\frac{G(u(x))G(u(y))}{|x-y|^{\mu}}dxdy.
\end{equation*}
By the Ambrosetti-Rabinowitz condition $(f_2)$,
$$
\frac{\psi'(t)}{\psi(t)}\geq \frac{\alpha}{t},\quad \,\,\, \mbox{for} \,\,\, t>0.
$$
Integrating over the interval $(1, s\|u_0\|)$ where $s>\frac{1}{\|u_0\|}$, we find
$$
\Psi(su_0)\geq \Psi\Big(\frac{u_0}{\|u_0\|}\Big)\|u_0\|^{\alpha} s^{\alpha}.
$$
Therefore, for $s$ large we get
$$
 \Phi_{1,1,1}(su_0)\leq C_1 s^2-C_2s^{\alpha}.
$$
Since $\alpha>2$, $(2)$ follows with $e=s u_0$ and $s$ large enough.
\end{proof}

\noindent
By the Mountain Pass theorem without (PS) condition, there is a (PS) sequence $(u_n) \subset E$ such that
$$
\Phi'_{1,1,1}(u_n)\rightarrow0,\quad \Phi_{1,1,1}(u_n)\rightarrow
{m_{1,1,1}},
$$
where the minimax value $m_{1,1,1}$ can be characterized by
\begin{equation} \label{m}
m_{1,1,1}:=\inf_{u\in E\backslash\{0\}} \max_{t\geq 0}
\Phi_{1,1,1}(tu)= \inf_{u\in {\cal N}_{1,1,1}}\Phi_{1,1,1}(u).
\end{equation}

\noindent
By using the Ambrosetti-Rabinowitz condition, it is easy to see that $(u_n)$ is bounded in $E$. The next lemma establishes an important information involving $(PS)$ sequence which will be crucial later on.
\begin{lem}[Nonvanishing energy range]
	\label{PS1}
Assume that $(u_{n})\subset E$ is a $(PS)_{c}$-sequence with
\begin{equation*}
0<c<\frac{5-\mu}{2(6-\mu)}S_{H,L}^{\frac{6-\mu}{5-\mu}}.
\end{equation*}
Then $(u_{n})$ cannot be vanishing, namely there exist $r, \delta>0$ and a sequence $(y_n) \subset \R^3$ such that
\begin{equation*}
\liminf_{n\to\infty}\int_{B_r(y_n)}|u_n|^2dx\geq\delta.
\end{equation*}
\end{lem}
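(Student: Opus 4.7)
The plan is to argue by contradiction. Suppose $(u_n)$ is vanishing, i.e.\
$$
\lim_{n\to\infty}\sup_{y\in\R^3}\int_{B_r(y)}|u_n|^2\,dx=0,\quad \text{for every }r>0.
$$
Since $(u_n)$ is already known to be bounded in $E$, Lions' lemma then yields $u_n\to 0$ strongly in $L^{s}(\R^3)$ for every $s\in(2,6)$.

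The first step is to show that every nonlocal double integral involving the subcritical pieces $F(u_n)$ or $f(u_n)u_n$ tends to zero. By $(f_1)$ and $(f_2)$ one has $|F(u)|+|f(u)u|\leq C(|u|^{q}+|u|^{p})$ with $q,p\in(\tfrac{6-\mu}{3},6-\mu)$. Applying Proposition~\ref{HLS} with $t=r=\tfrac{6}{6-\mu}$ controls the pure-subcritical integral by $C|F(u_n)|_{6/(6-\mu)}^{2}$ and each mixed term by $C|u_n|_6^{6-\mu}|F(u_n)|_{6/(6-\mu)}$. Since $|F(u_n)|_{6/(6-\mu)}\leq C\bigl(|u_n|_{6q/(6-\mu)}^{q}+|u_n|_{6p/(6-\mu)}^{p}\bigr)$ and both exponents $\tfrac{6q}{6-\mu},\tfrac{6p}{6-\mu}$ lie in $(2,6)$, Lions' lemma forces these factors to zero while $|u_n|_6$ stays bounded.

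Next, set $A_n:=\|u_n\|^{2}$ and $B_n:=\int_{\R^3}\int_{\R^3}\frac{|u_n(x)|^{6-\mu}|u_n(y)|^{6-\mu}}{|x-y|^{\mu}}dxdy$, and pass to a subsequence along which $A_n\to A$ and $B_n\to B$. Expanding $G(u)=|u|^{6-\mu}+F(u)$ and $g(u)u=(6-\mu)|u|^{6-\mu}+f(u)u$ in $\langle\Phi'_{1,1,1}(u_n),u_n\rangle\to 0$ and in $\Phi_{1,1,1}(u_n)\to c$, and discarding the three mixed or subcritical pieces that vanish by Step~1, one arrives at
$$
A=B,\qquad \frac{A}{2}-\frac{B}{2(6-\mu)}=c,
$$
so that $c=\tfrac{5-\mu}{2(6-\mu)}A$. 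In particular $A>0$, since $c>0$.

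Finally, the definition \eqref{S1} of $S_{H,L}$ gives $\int_{\R^3}|\nabla u_n|^{2}\,dx\geq S_{H,L}\,B_n^{1/(6-\mu)}$, and since $A_n\geq \int_{\R^3}|\nabla u_n|^2\,dx$, passing to the limit yields $A\geq S_{H,L}\,A^{1/(6-\mu)}$, whence $A\geq S_{H,L}^{(6-\mu)/(5-\mu)}$. Substituting back,
$$
c\geq \frac{5-\mu}{2(6-\mu)}\,S_{H,L}^{(6-\mu)/(5-\mu)},
$$
contradicting the assumed strict upper bound on $c$. The delicate point I expect is the bookkeeping of the expansions of $G(u(x))G(u(y))$ and $G(u(y))g(u(x))u(x)$ into one purely critical piece plus the three cross/subcritical pieces, and checking via HLS that each of those pieces really vanishes uniformly in both identities; once this is secured the contradiction is an immediate consequence of the Sobolev--HLS constant.
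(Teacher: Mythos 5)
Your proof is correct and follows essentially the same route as the paper's: assume vanishing, invoke Lions' lemma to kill all terms involving the subcritical pieces (you use HLS at the exact exponent $t=r=\tfrac{6}{6-\mu}$ and control $|F(u_n)|_{6/(6-\mu)}$, whereas the paper perturbs $t,s$ slightly away from $\tfrac{6}{6-\mu}$ so that all $L^s$-exponents land strictly inside $(2,6)$ — both are legitimate), reduce the Palais--Smale identities to the critical part, and then run the $S_{H,L}$ lower bound to force $c\ge \tfrac{5-\mu}{2(6-\mu)}S_{H,L}^{(6-\mu)/(5-\mu)}$.
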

\begin{proof}
By contradiction, if $(u_{n})\subset E$ is vanishing, then \cite[Lemma 1.21]{MW} yields
$$
u_{n}\rightarrow0 \ \ \mbox{in} \ \ L^{r}(\R^3),
$$
as $n\to \infty$, where $2<r<6$. Choose $t,s$ close to $\frac{6}{6-\mu}$ satisfying
$$
1/t+\mu/3+1/s=2.
$$
Applying the Hardy-Littlewood-Sobolev inequality, we know
$$
\Big|\int_{\R^3}
\int_{\R^3}\frac{|u_{n}(y)|^{6-\mu}f(u_{n}(x))u_{n}(x)}
{|x-y|^{\mu}}dxdy\Big|\leq C|u_n|^{6-\mu}_{t(6-\mu)}(|u_n|^{p}_{sp}+|u_n|^{q}_{sq}),
$$
from where it follows
\begin{equation*}
\int_{\R^3}
\int_{\R^3}\frac{|u_{n}(y)|^{6-\mu}f(u_{n}(x))u_{n}(x)}
{|x-y|^{\mu}}dxdy \rightarrow0,  \ \  \ \ \int_{\R^3}
\int_{\R^3}\frac{|u_{n}(y)|^{6-\mu}F(u_{n}(x))}
{|x-y|^{\mu}}dxdy\rightarrow0,
\end{equation*}
as $n\to \infty$. Similarly,
\begin{equation*}
\int_{\R^3}
\int_{\R^3}\frac{F(u_n(y))f(u_{n}(x))u_{n}(x)}
{|x-y|^{\mu}}dxdy \rightarrow0,  \ \  \ \ \int_{\R^3}
\int_{\R^3}\frac{F(u_n(y))F(u_{n}(x))}
{|x-y|^{\mu}}dxdy\rightarrow0,
\end{equation*}
as $n\to \infty$. Then, since $(u_{n})$ is a Palais-Smale sequence for $\Phi_{1,1,1}$ with  $\Phi_{1,1,1}(u_{n})\rightarrow c$, we get
\begin{align}\label{D12}
c&=\frac{1}{2}\|u_{n}\|^{2}-\frac{1}{2(6-\mu)}\int_{\R^3}
\int_{\R^3}\frac{|u_{n}(x)|^{6-\mu}|u_{n}(y)|^{6-\mu}}
{|x-y|^{\mu}}dxdy+o_{n}(1) \\
\label{D13}
\|u_{n}\|^{2}&=\int_{\R^3}
\int_{\R^3}\frac{|u_{n}(x)|^{6-\mu}|u_{n}(y)|^{6-\mu}}
{|x-y|^{\mu}}dxdy+o_{n}(1).
\end{align}
If $\|u_{n}\|\rightarrow0$, then it follows from  \eqref{D12} and \eqref{D13} that $c=0$, which is a contradiction. Then $\|u_{n}\|\not\rightarrow0$
and, by virtue of formula \eqref{D13}, we obtain
\begin{equation}\label{D14}
\aligned
\|u_{n}\|^{2}\leq S_{H,L}^{\mu-6}\|u_{n}\|^{2(6-\mu)}+o_{n}(1).
\endaligned
\end{equation}
So in light of \eqref{D14} we get
$$
\liminf_{n\to\infty}\|u_{n}\|^2\geq S_{H,L}^{(6-\mu)/(5-\mu)}.
$$
Then from \eqref{D12} and \eqref{D13} we easily conclude that $c\geq (5-\mu)/(2(6-\mu))S_{H,L}^{(6-\mu)/(5-\mu)}$, contradiction the assumption.
Hence, there exist $r, \delta>0$ and $(y_n) \subset \R^3$ with
$\liminf_{n\to\infty}\int_{B_r(y_n)}|u_n|^2dx\geq\delta$.
\end{proof}

\begin{lem}\label{EMP1} Suppose that $(f_1)$-$(f_3)$ hold. Then there exists $u_{0}\in H^{1}(\R^3)\setminus\{0\}$  such that
\begin{equation*}
\sup_{t\geq0}\Phi_{1,1,1}(tu_{0})<\frac{5-\mu}{2(6-\mu)}S_{H,L}^{\frac{6-\mu}{5-\mu}}.
\end{equation*}
\end{lem}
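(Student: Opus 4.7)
The plan is to construct a test function around the Aubin-Talenti type minimizer $\tilde U$ of $S_{H,L}$ provided by Lemma~\ref{ExFu} and to exploit the subcritical lower bound $F(s)\geq c_1|s|^\zeta$ from $(f_1)$ to create a negative correction of order $\vr^{6-\mu-\zeta}$ that strictly beats the $O(\vr)$ overhead coming from the standard truncation. The crucial arithmetic is that $\zeta>5-\mu$ forces $6-\mu-\zeta<1$, so this correction dominates as $\vr\to 0$.

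Fix a radial cutoff $\psi\in C_0^\infty(\R^3)$ with $\psi\equiv 1$ on $B_1$ and $\psi\equiv 0$ outside $B_2$, set $\tilde U_\vr(x):=\vr^{-1/2}\tilde U(x/\vr)$ and $u_\vr:=\psi\tilde U_\vr$. Standard Brezis-Nirenberg type scaling combined with Lemma~\ref{ExFu} (which gives $\int_{\R^3}|\nabla\tilde U|^2=\int_{\R^3}\int_{\R^3}|\tilde U|^{6-\mu}|\tilde U|^{6-\mu}/|x-y|^\mu=S_{H,L}^{(6-\mu)/(5-\mu)}$) yields, as $\vr\to 0^+$,
$$
A_\vr:=\int_{\R^3}(|\nabla u_\vr|^2+|u_\vr|^2)\,dx=S_{H,L}^{(6-\mu)/(5-\mu)}+O(\vr),
$$
$$
B_\vr:=\int_{\R^3}\int_{\R^3}\frac{|u_\vr(x)|^{6-\mu}|u_\vr(y)|^{6-\mu}}{|x-y|^\mu}dxdy=S_{H,L}^{(6-\mu)/(5-\mu)}+o(\vr).
$$
For the subcritical HLS integral, rescaling $x=\vr x'$ and applying dominated convergence (noting $\tilde U^\zeta\in L^{6/(6-\mu)}$, which follows from $\zeta>5-\mu>3-\mu/2$, so HLS applies to the limit) give
$$
D_\vr:=\int_{\R^3}\int_{\R^3}\frac{|u_\vr(x)|^\zeta|u_\vr(y)|^\zeta}{|x-y|^\mu}dxdy\geq C_\zeta\,\vr^{6-\mu-\zeta}
$$
for some $C_\zeta>0$ and all sufficiently small $\vr$.

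Expanding $G(s)^2=(|s|^{6-\mu}+F(s))^2$, discarding the non-negative mixed term $2|u|^{6-\mu}F$, and bounding $F(tu_\vr)\geq c_1 t^\zeta u_\vr^\zeta$ in the $F$-$F$ term, we arrive at the pointwise estimate
$$
\Phi_{1,1,1}(tu_\vr)\leq \frac{t^2}{2}A_\vr-\frac{t^{2(6-\mu)}}{2(6-\mu)}B_\vr-\frac{c_1^2 t^{2\zeta}}{2(6-\mu)}D_\vr.
$$
Since $2\zeta<2(6-\mu)$, the right-hand side tends to $-\infty$ as $t\to\infty$ and vanishes at $t=0$, so its maximum is attained at some $t_\vr^*>0$. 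A comparison argument (the value at $t=1$ stays close to the positive constant $\tfrac{5-\mu}{2(6-\mu)}S_{H,L}^{(6-\mu)/(5-\mu)}$, while near $t=0$ the quadratic term dominates) shows $t_\vr^*\geq a>0$ for some $a$ independent of $\vr$. The explicit supremum of the first two terms equals $\tfrac{5-\mu}{2(6-\mu)}A_\vr^{(6-\mu)/(5-\mu)}B_\vr^{-1/(5-\mu)}$, and using the identity $(6-\mu)/(5-\mu)-1/(5-\mu)=1$ together with the expansions above, this is $\tfrac{5-\mu}{2(6-\mu)}S_{H,L}^{(6-\mu)/(5-\mu)}+O(\vr)$. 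Therefore
$$
\sup_{t\geq 0}\Phi_{1,1,1}(tu_\vr)\leq \frac{5-\mu}{2(6-\mu)}S_{H,L}^{(6-\mu)/(5-\mu)}+C_3\vr-C_4\vr^{6-\mu-\zeta},
$$
and since $6-\mu-\zeta<1$ the negative term dominates for small $\vr$, whence setting $u_0:=u_\vr$ yields the desired strict inequality.

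The main technical obstacle is securing the sharp lower bound $D_\vr\gtrsim \vr^{6-\mu-\zeta}$ with precisely that rate: the cutoff $\psi$ must not spoil the scaling, which is handled by dominated convergence after the rescaling, and the finiteness of the limit HLS integral for $\tilde U^\zeta$ is exactly the place where the hypothesis $\zeta>5-\mu$ is used in a sharp way.
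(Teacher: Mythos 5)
Your proposal is correct and follows essentially the same route as the paper's own proof: both truncate the Aubin--Talenti type bubble, drop the nonnegative cross term in the expansion of $G^2$, use $F(s)\ge c_1|s|^\zeta$ to produce a negative correction of order $\vr^{6-\mu-\zeta}$, and close the argument by noting $6-\mu-\zeta<1$ forces that term to beat the $O(\vr)$ overhead from the cutoff. The paper obtains the lower bound for the subcritical Coulomb energy through the explicit computation in Lemma~\ref{ECT} rather than a rescaling plus dominated convergence argument, but that is only a cosmetic difference.
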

\begin{proof}
For every $\varepsilon>0$, consider
$$
U_{\varepsilon}(x):=\sqrt{\varepsilon}U\left(\frac{x}{\varepsilon}\right),\quad
u_{\varepsilon}(x):=\psi(x)U_{\varepsilon}(x),\quad x\in\R^3,
$$
be the functions in formula \eqref{B17} in the Appendix, where
$\psi\in C_{0}^{\infty}(\R^3)$ is such that $\psi=1$ on $B(0,\delta)$ and $\psi=0$ on $\R^3\setminus B(0,2\delta)$ for some $\delta>0$.
From Lemma \ref{ECT} and  \cite[Lemma 1.46]{MW}, we know that
\begin{equation*}
\int_{\R^3}|u_{\varepsilon}|^{2^{\ast}} dx=C(3,\mu)^{\frac{3}{2(6-\mu)}}S_{H,L}^{\frac{3}{2}}+\O(\varepsilon^{3}),
\end{equation*}
\begin{align}
\label{B19}
\int_{\R^3}|\nabla u_{\varepsilon}|^{2}dx
&=C(3,\mu)^{\frac{3}{2(6-\mu)}}S_{H,L}^{\frac{3}{2}}+\O(\varepsilon), \\
\label{B191}
\int_{\R^3}|u_{\varepsilon}|^{2}dx&=
\displaystyle \O(\varepsilon), \\
\label{B201}
\int_{\R^3}\int_{\R^3}\frac{|u_{\varepsilon}(x)|^{6-\mu}|u_{\varepsilon}(y)|^{6-\mu}}
{|x-y|^{\mu}}dxdy
&\geq C(3,\mu)^{\frac{3}{2}}S_{H,L}^{\frac{6-\mu}{2}}-\O\left(\varepsilon^{\frac{6-\mu}{2}}\right), \\
\label{B20}
\int_{\R^3}\int_{\R^3}\frac{|u_{\varepsilon}(x)|^{\zeta}
|u_{\varepsilon}(y)|^{\zeta}}
{|x-y|^{\mu}}dxdy
&\geq \O(\varepsilon^{6-\mu-\zeta})-\O\left(\varepsilon^{\frac{6-\mu}{2}}\right),\quad  \hbox{if $\frac{6-\mu}{2}<\zeta<6-\mu$}.
\end{align}
Then the estimates \eqref{B19}-\eqref{B20} imply
$$\aligned
\Phi_{1,1,1}(tu_{\varepsilon})&\leq\frac{t^{2}}{2}\int_{\R^3}(|\nabla u_{\varepsilon}|^{2}+|u_{\varepsilon}|^{2})dx-\frac{t^{2(6-\mu)}}{2(6-\mu)}
\int_{\R^3}
\int_{\R^3}\frac{|u_{\varepsilon}(x)|^{6-\mu}|u_{\varepsilon}(y)|^{6-\mu}}
{|x-y|^{\mu}}dxdy\\
&\hspace{7mm}-\frac{t^{2\zeta}}{2(6-\mu)}c_1^2
\int_{\R^3}
\int_{\R^3}\frac{|u_{\varepsilon}(x)|^{\zeta}|u_{\varepsilon}(y)|^{\zeta}}
{|x-y|^{\mu}}dxdy\\
&\leq\frac{t^{2}}{2}\big(C(3,\mu)^{\frac{1}{6-\mu}\cdot\frac{3}{2}}S_{H,L}^{\frac{3}{2}}+\O(\varepsilon)\big)
-\frac{t^{2(6-\mu)}}{2(6-\mu)}\big(C(3,\mu)^{\frac{3}{2}}S_{H,L}^{\frac{6-\mu}{2}}-\O(\varepsilon^{\frac{6-\mu}{2}})\big)\\
&\hspace{7mm}-\frac{t^{2\zeta}}{2(6-\mu)}(\O(\varepsilon^{6-\mu-\zeta})-\O(\varepsilon^{\frac{6-\mu}{2}})):=h(t).
\endaligned
$$
Then $h(t)\rightarrow -\infty$ as $t\rightarrow+\infty$, $h(0)=0$ and $h(t)>0$ as $t\rightarrow0^{+}$. In turn,
there exists $t_{\varepsilon}>0$ such that $\sup_{\R^+}h$ is attained at $t_{\varepsilon}$. Differentiating $h$, we obtain
$$
\big(C(3,\mu)^{\frac{3}{2(6-\mu)}}S_{H,L}^{\frac{3}{2}}+\O(\varepsilon)\big)
-t_{\varepsilon}^{2(6-\mu)-2}
\big(C(3,\mu)^{\frac{3}{2}}S_{H,L}^{\frac{6-\mu}{2}}-\O(\varepsilon^{\frac{6-\mu}{2}})\big)
= t_\vr^{2\zeta-2}\big(\O(\varepsilon^{6-\mu-\zeta})-\O(\varepsilon^{\frac{6-\mu}{2}})\big).
$$
Since $0<\mu<3$ and $5-\mu<\zeta<6-\mu$ then
$6-\mu-\zeta<(6-\mu)/2.$
Hence, as $\varepsilon\rightarrow0^{+}$ we have
$$
t_{\varepsilon}<S_{H,L}(\varepsilon):=\left(\frac{C(3,\mu)^{\frac{3}{2(6-\mu)}}S_{H,L}^{\frac{3}{2}}+\O(\varepsilon)}
{C(3,\mu)^{\frac{3}{2}}S_{H,L}^{\frac{6-\mu}{2}}-\O(\varepsilon^{\frac{6-\mu}{2}})}\right)^{\frac{1}{2(6-\mu)-2}}
$$
and there exists $t_{0}>0$ such that, for $\varepsilon>0$ small enough,
$t_{\varepsilon}\geq t_{0}.$
Notice that the function
$$
t\mapsto \frac{t^{2}}{2}\big(C(3,\mu)^{\frac{3}{2(6-\mu)}}S_{H,L}^{\frac{3}{2}}+\O(\varepsilon)\big)
-\frac{t^{2(6-\mu)}}{2(6-\mu)}
\big(C(3,\mu)^{\frac{3}{2}}S_{H,L}^{\frac{6-\mu}{2}}-\O(\varepsilon^{\frac{6-\mu}{2}})\big)
$$
is increasing on $[0,S_{H,L}(\varepsilon)]$, thanks to $t_{0}<t_{\varepsilon}<S_{H,L}(\varepsilon)$, we have
$$\aligned
\max_{t\geq0}\Phi_{1,1,1}(tu_{\varepsilon})
&\leq\frac{5-\mu}{2(6-\mu)}\left(\frac{C(3,\mu)^{\frac{3}{2(6-\mu)}}S_{H,L}^{\frac{3}{2}}+\O(\varepsilon)}
{\Big(C(3,\mu)^{\frac{3}{2}}S_{H,L}^{\frac{6-\mu}{2}}-\O(\varepsilon^{\frac{6-\mu}{2}})\Big)^{\frac{1}{6-\mu}}}
\right)^{\frac{6-\mu}{5-\mu}}\!\!\!- \O(\varepsilon^{6-\mu-\zeta})+\O(\varepsilon^{\frac{6-\mu}{2}}) \\
&\leq\frac{5-\mu}{2(6-\mu)}S_{H,L}^{\frac{6-\mu}{5-\mu}}+\O(\varepsilon)
- \O(\varepsilon^{6-\mu-\zeta})+\O(\varepsilon^{\frac{6-\mu}{2}}).
\endaligned
$$
Since $0<\mu<3$ and $5-\mu<\zeta<6-\mu$, we know that $6-\mu-\zeta<1,$ and
therefore
$$
\max_{t\geq0}\Phi_{1,1,1}(tu_{\varepsilon})
<\frac{5-\mu}{2(6-\mu)}S_{H,L}^{\frac{6-\mu}{5-\mu}},
$$
if $\vr$ is small enough. The proof is completed.
\end{proof}

\noindent
\section{Proof of Theorem \ref{AQ}.}
By Lemma \ref{mountain:1} and Lemma~\ref{EMP1} and the Mountain Pass Theorem without $(PS)$ condition (cf. \cite{MW}), there
exists a $(PS)_{m_{1,1,1}}$-sequence $(u_{n})\subset E$ of $\Phi_{1,1,1}$ with
$$
m_{1,1,1}<\frac{5-\mu}{2(6-\mu)}S_{H,L}^{\frac{6-\mu}{5-\mu}}.
$$
Furthermore, by Lemma \ref{PS1}, there exist $r, \delta>0$ and a sequence $(y_n) \subset \R^3$ such that
$$
\liminf_{n\to\infty}\int_{B_r(y_n)}|u_n|^2 dx\geq\delta.
$$
Since $\Phi_{1,1,1}$ and $\Phi'_{1,1,1}$ are both invariant by translation, without lost of generality we let $y_n=0$ and
\begin{equation} \label{E1}
\int_{B_r(0)}|u_n|^2dx\geq\frac{\delta}{2}.
\end{equation}
Since $(u_n)$ is also bounded, we may assume
$u_n\rightharpoonup u$ in $E$, $u_n(x)\to u(x)$ a.e. in $\R^3$, $u_n\to u$ in
$L^p_{{\rm loc}}(\R^3)$, $p<6$ and $u\not\equiv 0$ by \eqref{E1}. We first check that if $u_{n}\rightharpoonup u$ in $E$, then
\begin{equation}
\label{Conv-1}
\int_{\R^3}\int_{\R^3}\frac{|u_n(y)|^{6-\mu}|u_n(x)|^{4-\mu}u_{n}(x)\varphi(x)}
{|x-y|^{\mu}}dxdy\rightarrow
\int_{\R^3}\int_{\R^3}\frac{|u(y)|^{6-2\mu}|u(x)|^{4-\mu}u(x)\varphi(x)}{|x-y|^{\mu}}dxdy,
\end{equation}
for any $\varphi\in E$, as $n\rightarrow+\infty$.
By the Hardy-Littlewood-Sobolev inequality, we have
$$
|f*|x-y|^{-\mu}|_{{\frac{6}{\mu}}}\leq C |f|_{{\frac{6}{6-\mu}}},\quad \text{for all $f\in {{\frac{6}{6-\mu}}}$}.
$$
Choosing $f_n(y):=|u_n(y)|^{6-\mu}\in L^{\frac{6}{6-\mu}}(\R^3),$ we get
$$
||u_n(y)|^{6-\mu}*|x-y|^{-\mu}|_{{\frac{6}{\mu}}}\leq C |u_n|_{{6}}\leq C.
$$
Therefore, by H\"older inequality with exponents $\frac{5}{5-\mu}$ and $\frac{5}{\mu}$, the sequence
$$
\Big(|u_n(y)|^{6-\mu}*|x-y|^{-\mu}\Big) |u_n(x)|^{4-\mu}u_{n}(x)
$$
is bounded in $L^{6/5}(\R^3)$.
Then, as $n\to+\infty$, by duality we have
$$
\int_{\R^3}\frac{|u_n(y)|^{6-\mu}}
{|x-y|^{\mu}}|u_{n}(x)|^{4-\mu}u_{n}(x)dy\rightharpoonup \int_{\R^3}\frac{|u(y)|^{6-\mu}}
{|x-y|^{\mu}}|u(x)|^{4-\mu}u(x) dy, \quad \mbox{in $L^{\frac{6}{5}}(\R^3)$}
$$
as $n\rightarrow+\infty$. Then \eqref{Conv-1} follows for every $\varphi\in E\subset L^6(\R^3)$.
 For $\varphi \in C^{\infty}_{0}(\mathbb{R}^{3})$,
notice that
$$\aligned
\frac{1}{6-\mu}\int_{\R^3}\int_{\R^3}\frac{G(u(y))g(u(x))\varphi(x)}
{|x-y|^{\mu}}dxdy=\int_{\R^3}\left(\int_{\R^3}\frac{|u(y)|^{6-\mu}+F(u(y))}{|x-y|^{\mu}}dy\right)\Big(|u(x)|^{4-\mu}u(x)+ \frac{1}{6-\mu}f(u(x))\Big)\varphi(x) dx,
\endaligned$$
since $f$ is subcritical in the sense of the Hardy-Littlewood-Sobolev inequality, it is then easy to prove
$$
\int_{\R^3}\int_{\R^3}\frac{G(u_n(y))g(u_n(x))\varphi(x)}
{|x-y|^{\mu}}dxdy\to \int_{\R^3}\int_{\R^3}\frac{G(u(y))g(u(x))\varphi(x)}
{|x-y|^{\mu}}dxdy.
$$
Then $u $ is a nontrivial critical point for $\Phi_{1,1,1}$. By Fatou's Lemma, since $g(s)s-G(s)\geq 0$ for all $s$, we get
$$
\aligned
m_{1,1,1}&\leq\Phi_{1,1,1}(u)-\frac12\langle\Phi'_{1,1,1}(u), u\rangle\\
&\leq \frac{1}{2(6-\mu)}\int_{\R^3}\int_{\R^3}\frac{\big(g(u(x))u(x)-G(u(x))\big)G(u(y))}{|x-y|^{\mu}}dxdy\\
&\leq \lim_{n\to \infty} \frac{1}{2(6-\mu)}\int_{\R^3}\int_{\R^3}\frac{\big(g(u_n(x))u_n(x)-G(u_n(x))\big)G(u_n(y))}{|x-y|^{\mu}}dxdy\\
&=\Phi_{1,1,1}(u_n)-\frac12\langle\Phi'_{1,1,1}(u_n), u_n\rangle
\to m_{1,1,1},
\endaligned
$$
we know
$\Phi_{1,1,1}(u)= m_{1,1,1}$, which means that $u$ is a ground state solution for  $\Phi_{1,1,1}$.
Rewriting the equation \eqref{A} in the form of
$$
-\Delta u+ u
=\left(\int_{\mathbb{R}^3}\frac{H(u(y))u(y)}{|x-y|^{\mu}}dy\right)K(u)\hspace{4.14mm}\mbox{in}\hspace{1.14mm} \R^{3}
$$
where
$$
H(u):=\frac{|u|^{6-\mu}+F(u)}{u},\quad
K(u):=|u|^{4-\mu}u+ \frac{1}{6-\mu}f(u)\in L^{\frac{6}{3-\mu}}(\R^3)+L^{\frac{6}{5-\mu}}(\R^3).
$$
By \cite[Proposition 3.1]{MS2}, we know
$u\in L^p(\R^3)$ for all $p\in [2, 18/(3-\mu))$. Using the growth assumption of $f$ and the
higher integrability of $u$, the Hardy-Littlewood-Sobolev inequality yields, for some $C>0$,
$$
\left|\int_{\R^3}  \frac{G(u(y))}{|x-y|^\mu}dy \right|_{\infty}
\leq C||u|^{6-\mu}+|u|^q+|u|^p|_{\frac{3}{3-\mu}}\leq C\Big(|u|_{\frac{3(6-\mu)}{3-\mu}}^{6-\mu}+
|u|_{\frac{3q}{3-\mu}}^{q}+|u|_{\frac{3p}{3-\mu}}^{p}\Big),
$$
which is finite since the various exponents live within the range $[2, 18/(3-\mu))$. Thus,
$$
-\Delta u+ u
\leq C\Big(|u|^{4-\mu}u+ \frac{1}{6-\mu}f(u)\Big)\hspace{4.14mm}\mbox{in}\hspace{1.14mm} \mathbb{R}^{3}.
$$
By the Moser iteration, the solution $u$ of \eqref{AQ} is classical,
bounded and it decays to zero at infinity.
\qed

\begin{lem}\label{EDP}
There are $C$, $\beta>0$ such that the ground state solution satisfies
$|u(x)|\leq C e^{-\beta|x|}$ for $x\in \mathbb{R}^{3}$.
\end{lem}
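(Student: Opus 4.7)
The plan is to reduce equation \eqref{A} to a linear elliptic inequality of the form $-\Delta u+\tfrac12 u\leq 0$ outside a sufficiently large ball, and then conclude by comparing $u$ with an explicitly constructed exponentially decaying supersolution.

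\textbf{Step 1 (Decay at infinity of $u$).} From the proof of Theorem \ref{AQ} we already know that the ground state $u$ is a classical bounded positive solution of \eqref{A} and belongs to $L^p(\R^3)$ for every $p\in[2,18/(3-\mu))$. Standard interior $C^{1,\alpha}$ elliptic estimates on unit balls centered at arbitrary $x$, combined with this $L^p$ integrability and the uniform $L^\infty$ bound, imply $u(x)\to 0$ as $|x|\to\infty$.

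\textbf{Step 2 (Decay of the Riesz potential term).} Next, I would show that
$$
I(x):=\int_{\R^3}\frac{|u(y)|^{6-\mu}+F(u(y))}{|x-y|^\mu}\,dy\longrightarrow 0,\qquad |x|\to\infty.
$$
Since $|u|^{6-\mu}+F(u)\in L^1(\R^3)$ by the higher integrability of $u$ together with $(f_1)$, I split the integral into the three regions $\{|x-y|<1\}$, $\{|x-y|\geq 1,\ |y|\leq |x|/2\}$ and $\{|x-y|\geq 1,\ |y|>|x|/2\}$. On the first region the integrand is uniformly small by Step 1 while $|z|^{-\mu}$ is locally integrable; on the second I use $|x-y|^{-\mu}\leq (|x|/2)^{-\mu}$; and on the third I use the absolute continuity of the finite measure $(|u|^{6-\mu}+F(u))\,dy$.

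\textbf{Step 3 (Linear inequality outside a ball).} Assumption $(f_1)$ with $q>(6-\mu)/3>1$ gives $f(s)/s=o(1)$ as $s\to 0^+$, and $|u(x)|^{4-\mu}\to 0$ as $|x|\to\infty$ by Step 1. Combining with Step 2, for every $\eta>0$ there exists $R_0>0$ such that for all $|x|\geq R_0$,
$$
I(x)\Big(|u(x)|^{4-\mu}+\frac{f(u(x))}{(6-\mu)u(x)}\Big)\leq \eta.
$$
Choosing $\eta=1/2$ and using equation \eqref{A} with $\kappa=\nu=\tau=1$, I obtain $-\Delta u+\tfrac12 u\leq 0$ on $\R^3\setminus \overline{B_{R_0}}$.

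\textbf{Step 4 (Comparison with an exponential barrier).} Fix any $\beta\in(0,1/\sqrt{2})$. A direct computation in radial coordinates gives $-\Delta(e^{-\beta|x|})=(2\beta/|x|-\beta^2)e^{-\beta|x|}$ for $x\neq 0$, and therefore $w(x):=Me^{-\beta|x|}$ satisfies $-\Delta w+\tfrac12 w\geq (\tfrac12-\beta^2)w\geq 0$ on $\R^3\setminus\{0\}$. Picking $M$ large enough that $w\geq u$ on $\partial B_{R_0}$ (which is possible because $u\in L^\infty$), the weak maximum principle applied to $u-w$ on the exterior domain $\R^3\setminus B_{R_0}$, together with $u,w\to 0$ at infinity, yields $u(x)\leq Me^{-\beta|x|}$ for all $|x|\geq R_0$. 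Absorbing the compact set $\overline{B_{R_0}}$ into the constant $C$ gives the stated bound.

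The main technical obstacle is Step 2: it requires a careful splitting of the nonlocal integral and simultaneously exploits the vanishing of $u$ at infinity, its integrability in appropriate Lebesgue spaces, and the local integrability of the Riesz kernel. Once Step 2 is in place, the remaining argument is a standard elliptic comparison.
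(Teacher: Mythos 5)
Your overall route is essentially the paper's: show $u$ vanishes at infinity, reduce \eqref{A} to the linear differential inequality $-\Delta u+\tfrac{1}{2} u\le0$ on the complement of a large ball, and compare with the exponential supersolution $M e^{-\beta|x|}$; your Step 4 correctly makes explicit what the paper dispatches with ``it is then well known.'' Step 2, however, is superfluous: the uniform $L^\infty$ bound on the Riesz potential already obtained at the end of the proof of Theorem~\ref{AQ} suffices, since in the product of Step 3 the second factor is the one that supplies the smallness. (As a side note, the $L^1$ claim in Step 2 is itself shaky when $q<2$, because $u\in L^r$ is only known for $r\ge2$; but since Step 2 is not needed, this does not hurt the argument.)

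The one genuine flaw is the opening sentence of Step 3. From $(f_1)$ one gets $f(s)=O(s^{q-1})$ with $q>(6-\mu)/3$, hence $f(s)\to0$ as $s\to0^{+}$, but one only obtains $f(s)/s=O(s^{q-2})$. Since $(6-\mu)/3\in(1,2)$ for every $0<\mu<3$, the hypothesis allows $q\in(1,2)$, in which case $f(s)/s$ need not tend to $0$ and the linear inequality $-\Delta u+\tfrac{1}{2}u\le0$ at infinity does not follow from the stated computation. (The paper's own terse assertion that the right-hand side of $-\Delta u+\tfrac{1}{2} u\le C\bigl(|u|^{4-\mu}u+f(u)\bigr)-\tfrac{1}{2} u$ becomes negative for $|x|$ large hides exactly this point, so you are in the same boat as the published argument; nevertheless your proposal makes an explicitly false deduction where the paper is merely terse, and you should either assume $q\ge2$, or supply an independent argument for the required smallness of $f(u)/u$ near $u=0$.)
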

\begin{proof}
By the previous discussion, we have
$$
-\Delta u+ \frac{1}{2}u
\leq C\big(|u|^{4-\mu}u+f(u)\big)-\frac{1}{2}u.
$$
Since $u(x)\to 0$  uniformly
as $|x|\rightarrow +\infty,$ we find $\rho_{0} >0$ such that for $ |x|\geq \rho_{0}$ the right hand side is negative.
It is then well known that $-\Delta u+u/2\leq 0$ yields an exponential decay on $\R^3$.
\end{proof}

The following is a comparison result for the mountain pass values with different parameters $\kappa,\nu, \tau>0$, useful in proving the existence result for
\eqref{SCP} when $\vr$ is small enough.

\begin{lem}[Monotonicity of energy levels]
	\label{CO}
 Let $\kappa_i, \nu_i,\tau_i>0$, $i=1,2$, with $\min\{\kappa_2-\kappa_1, \nu_1-\nu_2,\tau_1-\tau_2\}\geq0$. Then
 $$
 m_{\kappa_1,\nu_1,\tau_1}\leq m_{\kappa_2,\nu_2, \tau_2}.
$$
 If additionally, $\max\{\kappa_2-\kappa_1, \nu_1-\nu_2, \tau_1-\tau_2\}>0$, then the inequality is strict.
\end{lem}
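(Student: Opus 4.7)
The plan is to exploit the variational characterization
$m_{\kappa,\nu,\tau}=\inf_{u\in E\setminus\{0\}}\max_{t\geq 0}\Phi_{\kappa,\nu,\tau}(tu)$
from formula \eqref{m}, together with the key pointwise comparison: under the hypotheses on the parameters, for every nonnegative $u\in E$ and every $t\geq 0$,
$$
\Phi_{\kappa_1,\nu_1,\tau_1}(tu)\leq \Phi_{\kappa_2,\nu_2,\tau_2}(tu),
$$
which follows because $\kappa_2\geq \kappa_1$ makes the quadratic part at most as large on the left, while $\nu_1\geq \nu_2$, $\tau_1\geq \tau_2$ and the positivity $F\geq 0$ (a consequence of $(f_2)$) make $G_1(u)\geq G_2(u)$, so the subtracted double-integral term is at least as large on the left.

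For the non-strict inequality, my first step is to invoke Theorem \ref{AQ} to pick a ground state $u_2\in\cn_{\kappa_2,\nu_2,\tau_2}$ with $\Phi_{\kappa_2,\nu_2,\tau_2}(u_2)=m_{\kappa_2,\nu_2,\tau_2}$; up to replacing $u_2$ by $|u_2|$ I may assume $u_2\geq 0$. By the Nehari characterization and assumption $(f_3)$, the map $t\mapsto \Phi_{\kappa_2,\nu_2,\tau_2}(tu_2)$ attains its maximum on $[0,\infty)$ exactly at $t=1$, so $\max_{t\geq 0}\Phi_{\kappa_2,\nu_2,\tau_2}(tu_2)=m_{\kappa_2,\nu_2,\tau_2}$. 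Combining with the pointwise comparison and the variational characterization,
$$
m_{\kappa_1,\nu_1,\tau_1}\leq \max_{t\geq 0}\Phi_{\kappa_1,\nu_1,\tau_1}(tu_2)\leq \max_{t\geq 0}\Phi_{\kappa_2,\nu_2,\tau_2}(tu_2)=m_{\kappa_2,\nu_2,\tau_2}.
$$

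For the strict inequality, the point to be careful about is that the upper bound must be strict at a positive value of $t$, not at $t=0$ (where both functionals vanish trivially). The fibering map $t\mapsto \Phi_{\kappa_1,\nu_1,\tau_1}(tu_2)$ is continuous on $[0,\infty)$, positive for small $t>0$ by Lemma \ref{mountain:1}(1), and tends to $-\infty$ as $t\to\infty$ by the Ambrosetti--Rabinowitz argument in Lemma \ref{mountain:1}(2); hence its supremum is attained at some $t_1>0$. If $\kappa_2>\kappa_1$ then $\tfrac{t_1^2}{2}\int(\kappa_2-\kappa_1)|u_2|^2 dx>0$ since $u_2\not\equiv 0$; if instead $\nu_1>\nu_2$ or $\tau_1>\tau_2$ then $G_1(u_2(x))>G_2(u_2(x))$ on $\{u_2>0\}$ (a set of positive measure), so
$$
\int_{\R^3}\int_{\R^3}\frac{G_1(u_2(x))G_1(u_2(y))-G_2(u_2(x))G_2(u_2(y))}{|x-y|^\mu}\,dx\,dy>0.
$$
Either way we obtain the strict inequality $\Phi_{\kappa_1,\nu_1,\tau_1}(t_1u_2)<\Phi_{\kappa_2,\nu_2,\tau_2}(t_1u_2)$, and consequently
$$
m_{\kappa_1,\nu_1,\tau_1}\leq \Phi_{\kappa_1,\nu_1,\tau_1}(t_1u_2)<\Phi_{\kappa_2,\nu_2,\tau_2}(t_1u_2)\leq \max_{t\geq 0}\Phi_{\kappa_2,\nu_2,\tau_2}(tu_2)=m_{\kappa_2,\nu_2,\tau_2},
$$
which completes the argument. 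The only mild subtlety is ensuring the maximizer $t_1$ is strictly positive so that the strict gap kicks in; this is handled by the mountain-pass geometry already established for $\Phi_{\kappa_1,\nu_1,\tau_1}$ via the same Lemma \ref{mountain:1} applied with these parameters (the proof of that lemma goes through verbatim for any positive $\kappa_1,\nu_1,\tau_1$).
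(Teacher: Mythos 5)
Your proof is correct and follows essentially the same route as the paper's: take a ground state $u_2$ for the $(\kappa_2,\nu_2,\tau_2)$-problem, use the Nehari characterization to write $m_{\kappa_2,\nu_2,\tau_2}=\max_{t\geq 0}\Phi_{\kappa_2,\nu_2,\tau_2}(tu_2)$, maximize the other fibering map at some $t_0>0$, and compare the two functionals pointwise using $\kappa_1\leq\kappa_2$ and $G_1\geq G_2$. The paper disposes of the strict case with ``the proof of the strict inequality is similar,'' so your explicit observation that the maximizer $t_1$ is strictly positive (by the mountain-pass geometry), which is what makes the strict gap effective, is a welcome piece of bookkeeping rather than a departure in method.
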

\begin{proof}
From Theorem \ref{AQ}, let $u$ be a weak solution of problem \eqref{A} with coefficients $\kappa_2,\nu_2, \tau_2$ at the energy level $\Phi_{\kappa_2,\nu_2,\tau_2}(u)=m_{\kappa_2,\nu_2,\tau_2}$.
By $(f_3)$, we know there is a unique $t=t(u)>0$ such that
 $$
 \Phi_{\kappa_2,\nu_2,\tau_2}(t(u)u)=\max_{s\geq0}\Phi_{\kappa_2,\nu_2,\tau_2}(su),\qquad
  t(u)u\in {\cal N}_{\kappa_2,\nu_2,\tau_2}.
 $$
Since $u\in {\cal N}_{\kappa_2,\nu_2,\tau_2}$, we know $t(u)=1$ and so
$$
\Phi_{\kappa_2,\nu_2,\tau_2}(u)=\max_{t\geq0}\Phi_{\kappa_2,\nu_2,\tau_2}(tu).
$$
Similarly, there exists $t_0>0$ such that $
\Phi_{\kappa_1,\nu_1,\tau_1}(t_0u)=\max_{t\geq 0}
\Phi_{\kappa_1,\nu_1,\tau_1}(tu)$. Then
$$\aligned
m_{\kappa_1,\nu_1,\tau_1}&=\inf_{w\in E\backslash\{0\}} \max_{t\geq 0}
\Phi_{\kappa_1,\nu_1,\tau_1}(tw)
\leq \max_{t\geq 0}
\Phi_{\kappa_1,\nu_1,\tau_1}(tu)\\
&=\Phi_{\kappa_1,\nu_1, \tau_1}(t_0u)
\leq \Phi_{\kappa_2,\nu_2,\tau_2}(t_0u)
 \leq \Phi_{\kappa_2,\nu_2,\tau_2}(u)=m_{\kappa_2,\nu_2,\tau_2},
\endaligned
$$
which concludes the proof. The proof of the strict inequality is similar.
\end{proof}

\section{Critical equation with nonlinear potential}
In this section we will consider the existence and concentration of the solutions of equation \eqref{SCP1}. Consider
\begin{equation}
\label{SCC1}
-\Delta u + u  =\frac{1}{6-\mu}\Big(\int_{\R^3}  \frac{Q(\vr y)G( u(y))}{|x-y|^\mu}dy\Big)Q(\vr x)g(u) \,\,\,\, \mbox{in $\R^{3}$},
\tag{SCC1}
\end{equation}
where we still use the notions $G(u)=|u|^{6-\mu}+F(u)$.
By changing variable, it is possible to see that the above equation is equivalent to equation \eqref{SCP1}. The energy functional associated to $(SCC1)$ is
$$
I_{\vr}(u):=\frac12\int_{\mathbb{R}^3} (|\nabla u|^2+ |u|^2)dx-\tilde{\Psi}(u),
\qquad
\tilde{\Psi}(u):=\frac{1}{2(6-\mu)}\int_{\mathbb{R}^3}\int_{\mathbb{R}^3}   \frac{Q(\vr y)G(u(y))Q(\vr x)G(u(x))}{|x-y|^\mu}dxdy.
$$
The Nehari manifold associated to $I_{\vr}$ will be denote by ${\cal{N}_\vr}$, that is,
${\cal{N}_\vr}=\big\{u\in E:u\neq0, \langle I'_{\vr}(u), u\rangle=0\big\}$
and  there exists $\alpha>0$, independent of $\vr$, such that
\begin{equation*}
\|u\|\geq \alpha, \,\,\,\,\,\,\, \forall u\in \cal{N}_\vr .
\end{equation*}
Similar to Lemma \ref{mountain:1}, we know $I_{\vr}$ also satisfies the Mountain Pass Geometry and assumption $(f_3)$ implies that the least energy can be characterized by
\begin{equation} \label{R0}
c_{\vr}=\inf_{u\in\cal{N}_\vr}I_{\vr}(u)=\inf_{u\in E\backslash\{0\}} \max_{t\geq 0}
I_{\vr}(tu),
\end{equation}
and there exists $c>0$, which is independent of $\vr$,  such that $c_{\vr}>c$.

\subsection{Truncating techniques}

For $ d\in [\nu_{\min}, \nu_{\max}]$, we set
$$
Q^d(\vr x):=\min\{d, Q(\vr x)\}
$$
and introduce the first auxiliary problem for equation $(SCC1)$ by considering
$$
-\Delta u + u  =\frac{1}{6-\mu}\Big(\int_{\R^3}  \frac{Q^d(\vr y)G( u(y))}{|x-y|^\mu}dy\Big)Q^d(\vr x)g(u).
$$
The associated energy functional is defined by
$$
\begin{array}{ll}
\displaystyle I^{d}_{\vr}(u)=\frac12\int_{\R^3}\big(|\nabla u|^2+ |u|^2\big)dx-\frac{1}{2(6-\mu)}\int_{\mathbb{R}^3}\int_{\mathbb{R}^3}   \frac{Q^d(\vr y)G(u(y))Q^d(\vr  x)G(u(x))}{|x-y|^\mu}dxdy.
\end{array}
$$
The associated Nehari manifold is ${\cal{N}}^{d}_{\vr}=\big\{u\in E:u\neq0,\langle {(I^{d}_{\vr}})'(u),u\rangle=0\big\}$
and the least energy is $c^{d}_{\vr}$.

\begin{lem}\label{AFE}
Suppose that $f$ satisfies $(f_1)$-$(f_3)$. Then
$$
\limsup_{\vr\to0}c^{d}_{\vr}\leq m_{1, Q^d(0), Q^d(0)}.
$$
\end{lem}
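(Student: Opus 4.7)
The plan is to bound $c^d_\vr$ from above by testing the mountain-pass characterization \eqref{R0} (applied to $I^d_\vr$) along the ray spanned by a ground state of the limit autonomous problem. Using Theorem~\ref{AQ}, I fix $w\in E\setminus\{0\}$ with $\Phi_{1,Q^d(0),Q^d(0)}(w)=m_{1,Q^d(0),Q^d(0)}$ and $w\in{\cal N}_{1,Q^d(0),Q^d(0)}$. The monotonicity condition $(f_3)$ (and the analogous monotonicity of $g$ via Remark~\ref{R1}) provides, for every $\vr>0$, a unique $t_\vr>0$ with $t_\vr w\in{\cal N}^d_\vr$ and $I^d_\vr(t_\vr w)=\max_{t\geq0}I^d_\vr(tw)$, whence
$$
c^d_\vr\;\leq\;I^d_\vr(t_\vr w).
$$
It thus suffices to prove $\limsup_{\vr\to0}I^d_\vr(t_\vr w)\leq m_{1,Q^d(0),Q^d(0)}$.

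The main step is dominated convergence for the double integral. Since $Q$ is continuous with $\inf Q>0$ and $Q^d=\min\{d,Q\}$, one has $Q^d(\vr x)\to Q^d(0)$ pointwise in $x$, together with the uniform bound $0<Q^d(\vr\cdot)\leq d$. For each fixed $t\geq0$ the integrand
$$
\frac{Q^d(\vr y)\,G(tw(y))\,Q^d(\vr x)\,G(tw(x))}{|x-y|^\mu}
$$
is dominated by $d^2\,|x-y|^{-\mu}G(tw(y))G(tw(x))$, which lies in $L^1(\R^3\times\R^3)$ by $(f_1)$ and the Hardy-Littlewood-Sobolev inequality. Therefore $I^d_\vr(tw)\to\Phi_{1,Q^d(0),Q^d(0)}(tw)$ for every $t\geq0$, and the same Lebesgue argument applied to $G(tw)\leq G(Tw)$ on $[0,T]$ promotes this to uniform convergence on compact intervals.

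It remains to localize $t_\vr$. The Nehari-type lower bound behind \eqref{alpha}, applied to $I^d_\vr$ uniformly in $\vr$, gives $t_\vr\geq\alpha_0>0$; on the other hand, $(f_2)$ and $(f_1)$ yield the Ambrosetti-Rabinowitz lower bound $G(tw(x))\geq C t^\alpha F(w(x))$ for $t$ large (with $\alpha>2$), so the nonlocal term in $I^d_\vr(tw)$ grows at least like $t^{2\alpha}$ while the quadratic part grows only like $t^2$ — uniformly in small $\vr$ — forcing $\limsup_{\vr\to0}t_\vr<\infty$. Along a subsequence $t_{\vr_n}\to t_\ast\in(0,\infty)$, and using the uniform convergence on $[0,2\sup_n t_{\vr_n}]$ we obtain
$$
\Phi_{1,Q^d(0),Q^d(0)}(t_\ast w)=\max_{t\geq 0}\Phi_{1,Q^d(0),Q^d(0)}(tw),
$$
so $t_\ast w\in{\cal N}_{1,Q^d(0),Q^d(0)}$ and uniqueness of the Nehari projection forces $t_\ast=1$. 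Consequently
$$
\limsup_{\vr\to0}c^d_\vr\;\leq\;\lim_{\vr\to0}I^d_\vr(t_\vr w)\;=\;\Phi_{1,Q^d(0),Q^d(0)}(w)\;=\;m_{1,Q^d(0),Q^d(0)}.
$$
The only delicate ingredient is the uniform (in $\vr$) confinement of $t_\vr$ inside a compact subset of $(0,\infty)$, so that the pointwise/dominated convergence of the nonlocal integrals can be combined with the convergence of the maximizers; the rest is routine.
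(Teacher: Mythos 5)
Your argument is correct and follows essentially the same route as the paper: project the ground state $w$ of the limit problem onto $\mathcal{N}^d_\vr$ at ratio $t_\vr$, bound $t_\vr$ uniformly, pass to a convergent subsequence, and use dominated convergence on the nonlocal double integral. The only deviation is that you go on to identify $t_\ast=1$ via uniqueness of the Nehari projection, but this (and the lower bound $t_\vr\geq\alpha_0$) is extraneous: once $t_{\vr_n}\to t_\ast$, the paper simply observes $\Phi_{1,Q^d(0),Q^d(0)}(t_\ast w)\leq\max_{t\geq 0}\Phi_{1,Q^d(0),Q^d(0)}(tw)=\Phi_{1,Q^d(0),Q^d(0)}(w)=m_{1,Q^d(0),Q^d(0)}$, which closes the argument without needing $t_\ast=1$. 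Also, your Ambrosetti--Rabinowitz estimate should read $G(tw)\geq Ct^{\theta/2}G(w)$ for $t\geq1$ (with $\theta>2$ as in Remark~\ref{R1}), not $G(tw)\geq Ct^\alpha F(w)$, but the conclusion (superquadratic growth of the nonlocal term, forcing $t_\vr$ bounded) is unchanged.
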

\begin{proof}
 Let $u$ be a ground state solution of ~\eqref{A} with coefficients $(1,Q^d(0), Q^d(0))$, that is
 $$
 \Phi_{1,Q^d(0), Q^d(0)}(u)=m_{1, Q^d(0), Q^d(0)}.
 $$
Then there exists a unique $t_\vr=t_\vr(u)>0$ such that $t_\vr u\in {\cal{N}}^{d}_{\vr}$ and $c^{d}_{\vr}\leq I^{d}_{\vr}(t_\vr u).$
From the boundedness of $Q$, by the arguments in Lemma \ref{mountain:1}, there exists
$T>0$ independent of $\vr$ with $I^{d}_{\vr}(su)<0$ for all $s\geq T$. Consequently,
$t_\vr<T$ and we may assume that $t_\vr\to t_0$.
Observe that
$$
\begin{array}{ll}
\displaystyle I^{d}_{\vr}(t_\vr u)
=\displaystyle\Phi_{1, Q^d(0), Q^d(0)}(t_\vr u)-\frac{1}{2(6-\mu)}\int_{\mathbb{R}^3}\int_{\mathbb{R}^3}   \frac{[Q^d(\vr y)Q^d(\vr x)-Q^d(0)Q^d(0)]G(u(y))G(u(x))}{|x-y|^\mu}dxdy.
\end{array}
$$
Once that $Q$ is bounded and $t_\vr \to t_0$, applying the Lebesgue's Dominated Convergence theorem, we know
\begin{align*}
\limsup\limits_{\vr\to 0} c^{d}_{\vr} &\leq \limsup_{\vr\to 0} I^{d}_{\vr}(t_\vr u)  \\
&=\limsup\limits_{\vr\to 0}  \big(\Phi_{1,Q^d(0), Q^d(0)}(t_\vr u)+o_\vr(1)\big)  \\
& =\Phi_{1,Q^d(0), Q^d(0)}(t_0 u)
 \leq \Phi_{1,Q^d(0), Q^d(0)}(u)
 =m_{1,Q^d(0),Q^d(0)},
\end{align*}
which concludes the proof.
\end{proof}

\noindent
Next, we prove an upper bound for the Mountain Pass level $c_{\vr}$ in \eqref{R0}.
\begin{lem}\label{UB1}
There holds
 $$
 \limsup_{\vr\to0}c_{\vr}\leq  m_{1,\nu_{\max},\nu_{\max}}.
 $$
\end{lem}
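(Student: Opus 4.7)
The plan is to adapt the argument of Lemma~\ref{AFE}, exploiting the fact that the coefficient $Q$ attains its global maximum $\nu_{\max}$ somewhere, and to use a translated ground state of the limit problem concentrated exactly at such a maximum point as a test function. Fix $x_0\in\mathcal{Q}$, so that $Q(x_0)=\nu_{\max}$, and let $u$ be a ground state for the autonomous equation \eqref{A} with coefficients $(\kappa,\nu,\tau)=(1,\nu_{\max},\nu_{\max})$ provided by Theorem~\ref{AQ}; in particular $u\in\mathcal{N}_{1,\nu_{\max},\nu_{\max}}$ and $\Phi_{1,\nu_{\max},\nu_{\max}}(u)=m_{1,\nu_{\max},\nu_{\max}}$. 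I would then set
\begin{equation*}
v_\vr(x):=u\Big(x-\tfrac{x_0}{\vr}\Big),\qquad x\in\R^3,
\end{equation*}
and pick $t_\vr>0$ with $t_\vr v_\vr\in\mathcal{N}_\vr$, so that $c_\vr\leq I_\vr(t_\vr v_\vr)$.

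The second step is an algebraic rewriting. Performing the substitution $z=x-x_0/\vr$ and $w=y-x_0/\vr$ inside the nonlocal term of $I_\vr$, one obtains for every $t\geq0$ the identity
\begin{equation*}
I_\vr(tv_\vr)=\Phi_{1,\nu_{\max},\nu_{\max}}(tu)+R_\vr(t),
\end{equation*}
where the remainder reads
\begin{equation*}
R_\vr(t)=-\frac{1}{2(6-\mu)}\iint_{\R^3\times\R^3}\frac{\bigl[Q(\vr z+x_0)Q(\vr w+x_0)-\nu_{\max}^{2}\bigr]G(tu(z))G(tu(w))}{|z-w|^{\mu}}\,dz\,dw.
\end{equation*}
The third step is to show $R_\vr(t)\to 0$ as $\vr\to 0$ for each fixed $t\geq 0$. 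The bracketed factor is uniformly bounded by $2|Q|_\infty^{2}$ and converges pointwise to $0$ by continuity of $Q$ at $x_0$, while the Hardy--Littlewood--Sobolev inequality provides a fixed $L^{1}(\R^3\times\R^3)$ majorant for the integrand (since $G(tu)\in L^{6/(6-\mu)}(\R^3)$). Lebesgue's dominated convergence theorem then yields the claim.

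It remains to control $t_\vr$. Using $(f_2)$ and $\inf_{\R^3}Q>0$, a mountain-pass--type estimate gives $I_\vr(sv_\vr)\leq C_1 s^{2}-C_2 s^{\alpha}$ with constants independent of $\vr$, so $t_\vr\leq T$ for some $T>0$ uniform in $\vr$; the uniform bound \eqref{alpha} transposed to $\mathcal{N}_\vr$ then forces $t_\vr\geq c_0>0$. Passing to a subsequence $t_\vr\to t_0\in(0,T]$, and using that $u$ maximises $t\mapsto\Phi_{1,\nu_{\max},\nu_{\max}}(tu)$ at $t=1$, I conclude
\begin{equation*}
\limsup_{\vr\to 0}c_\vr\leq\lim_{\vr\to 0}I_\vr(t_\vr v_\vr)=\Phi_{1,\nu_{\max},\nu_{\max}}(t_0 u)\leq\Phi_{1,\nu_{\max},\nu_{\max}}(u)=m_{1,\nu_{\max},\nu_{\max}}.
\end{equation*}
The only delicate point is the dominated convergence step for the double integral, but since $G(u)$ already lies in the correct Lebesgue space for the Hardy--Littlewood--Sobolev inequality to produce a $\vr$-independent envelope, this reduces to a routine passage to the limit and presents no structural obstacle.
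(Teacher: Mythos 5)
Your proof is correct and follows essentially the same strategy as the paper's, which obtains the bound by specializing Lemma~\ref{AFE} to $d=\nu_{\max}$ (so that $Q^{d}=Q$ and hence $c_{\vr}^{d}=c_{\vr}$); your argument simply unfolds the dominated-convergence comparison of that lemma directly. One refinement worth noting: you translate the ground state to an actual maximum point $x_{0}\in\mathcal{Q}$ before testing, whereas the paper's appeal to Lemma~\ref{AFE} yields only $\limsup_{\vr\to0}c_{\vr}\leq m_{1,Q(0),Q(0)}$ and therefore implicitly requires $0\in\mathcal{Q}$ (after a translation of coordinates that the paper does not make explicit).
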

\begin{proof}
If $d=\nu_{\max}$, then $Q^{d}(\vr x)=Q(\vr x)$. Consequently, $c^{d}_{\vr}=c_{\vr}.$
Then, by Lemma \ref{AFE},
$$
\limsup_{\vr\to0}c_{\vr}\leq   m_{1,\nu_{\max}, \nu_{\max}}.
$$
This completes the proof.
\end{proof}


\noindent
To consider the existence of solutions concentrating at the nonlinear potential, we will partially truncate the nonlinear potential $Q$ in front of the subcritical term and introduce the second auxiliary problem for equation $(SCC1)$. For $ e\in [\nu_{\min}, \nu_{\max})$, we set $Q^e(\vr x):=\min\{e, Q(\vr x)\} $
and consider
$$
-\Delta u + u  =\Big(\int_{\R^3}  \frac{Q(\vr y)|u(y)|^{6-\mu}+Q^e(\vr y)F(u(y))}{|x-y|^\mu}dy\Big)[Q(\vr x)|u|^{4-\mu}u+\frac{Q^e(\vr x)}{6-\mu}f(u)] \quad \mbox{in} \quad \mathbb{R}^3.
$$
The associated energy functional is defined by
$$
\begin{array}{ll}
\displaystyle \tilde{I}^{e}_{\vr}(u)=\frac12\int_{\R^3}\big(|\nabla u|^2+ |u|^2\big)dx-\frac{1}{2(6-\mu)}\int_{\mathbb{R}^3}\int_{\mathbb{R}^3}   \frac{[Q(\vr y)|u(y)|^{6-\mu}+Q^e(\vr y)F(u(y))][Q(\vr x)|u(x)|^{6-\mu}+Q^e(\vr x)F(u(x))]}{|x-y|^\mu}dxdy,
\end{array}
$$
the corresponding Nehari manifold and least energy are ${\tilde{\cal{N}}}^{e}_{\vr}$
and $\tilde{c}^{e}_{\vr}$. Related to the above functional, we have an important lower bound for the level $\tilde{c}^{e}_{\vr}$.
\begin{lem}\label{PCO}
$\tilde{c}^{e}_{\vr}\geq {m}_{1, \nu_{\max}, e}.$
\end{lem}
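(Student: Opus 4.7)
The plan is to establish the inequality by comparing the two functionals pointwise and then transferring the comparison to their Nehari / Mountain Pass characterizations.

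First I would exploit the pointwise bounds $0<Q(\vr x)\leq \nu_{\max}$ and $0<Q^{e}(\vr x)\leq e$, valid on all of $\R^3$. Together with $|u|^{6-\mu}\geq 0$ and with $F(u)\geq c_{1}|u|^{\zeta}\geq 0$ supplied by $(f_1)$, this gives, for every $y\in\R^3$ and every $u\in E$,
$$
Q(\vr y)|u(y)|^{6-\mu}+Q^{e}(\vr y)F(u(y)) \;\leq\; \nu_{\max}|u(y)|^{6-\mu}+e\,F(u(y)).
$$
Both sides are non-negative and $|x-y|^{-\mu}>0$, so multiplying two such inequalities (one in $y$ and one in $x$) and integrating bounds the double integral defining $\tilde{I}^{e}_{\vr}(u)$ from below by the one defining $\Phi_{1,\nu_{\max},e}(u)$. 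Since the quadratic parts of the two functionals coincide, I obtain
$$
\tilde{I}^{e}_{\vr}(u) \;\geq\; \Phi_{1,\nu_{\max},e}(u), \qquad \forall u\in E,
$$
and this inequality is preserved by the rescaling $u\mapsto tu$ for every $t\geq 0$.

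Next I would invoke the Mountain Pass/Nehari characterizations. For $\Phi_{1,\nu_{\max},e}$, which is the autonomous functional of Section 2 with coefficients $(\kappa,\nu,\tau)=(1,\nu_{\max},e)$, formula \eqref{m} (applied to general positive coefficients) gives
$$
m_{1,\nu_{\max},e}=\inf_{u\in E\setminus\{0\}}\max_{t\geq 0}\Phi_{1,\nu_{\max},e}(tu).
$$
The same fibering argument, which uses $(f_3)$ and the strict positivity of the coefficients $Q(\vr x)$ and $Q^{e}(\vr x)$, shows that for each $u\neq 0$ the map $t\mapsto \tilde{I}^{e}_{\vr}(tu)$ admits a unique strictly positive maximum, yielding
$$
\tilde{c}^{e}_{\vr}=\inf_{u\in E\setminus\{0\}}\max_{t\geq 0}\tilde{I}^{e}_{\vr}(tu),
$$
exactly in the spirit of \eqref{R0}.

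Combining the two ingredients, for every $u\in E\setminus\{0\}$ and every $t\geq 0$ the pointwise bound gives $\Phi_{1,\nu_{\max},e}(tu)\leq \tilde{I}^{e}_{\vr}(tu)$, hence
$$
\max_{t\geq 0}\Phi_{1,\nu_{\max},e}(tu) \;\leq\; \max_{t\geq 0}\tilde{I}^{e}_{\vr}(tu),
$$
and taking the infimum over $u\neq 0$ on both sides delivers $m_{1,\nu_{\max},e}\leq \tilde{c}^{e}_{\vr}$. There is no substantive obstacle: the argument is just a monotonicity of minimax levels in the coefficients, made possible by the non-negativity of $F$ from $(f_1)$ (which is what makes the bracket estimate go in the right direction) and by the fact that both functionals admit the same Mountain Pass-type minimax description.
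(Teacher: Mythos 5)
Your proof is correct and follows exactly the paper's line: use the pointwise bounds $Q(\vr x)\le\nu_{\max}$ and $Q^{e}(\vr x)\le e$ together with $F\ge 0$ to obtain $\tilde I^{e}_{\vr}(u)\ge\Phi_{1,\nu_{\max},e}(u)$ for all $u$, then pass through the common Mountain Pass/Nehari minimax characterization of the two levels. (One small wording slip: the nonlocal double integral in $\tilde I^{e}_{\vr}$ is bounded \emph{above}, not below, by the one in $\Phi_{1,\nu_{\max},e}$; since that term enters the energy with a minus sign, your displayed conclusion $\tilde I^{e}_{\vr}(u)\ge\Phi_{1,\nu_{\max},e}(u)$ is nevertheless the correct direction.)
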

\begin{proof}
 Since
$Q^e(\vr x)\leq e$ and $Q(\vr x)\leq \nu_{\max},$
from the characterization of the value ${m}_{1, \nu_{\max}, e}$,  we know
$$
\inf_{u\in E}\max_{t\geq0}\tilde{I}^{e}_{\vr}(tu)\geq \inf_{u\in E}\max_{t\geq0}\Phi_{1, \nu_{\max}, e}(tu),
$$
namely the assertion.
\end{proof}

\subsection{Existence for Theorem \ref{Result1}}
In this subsection, we  will prove Theorem \ref{Result1}.

\begin{lem}\label{Existence}
Suppose that the potential function $Q$ satisfies $(Q)$ and the nonlinearity $f$ satisfies $(f_1)$-$(f_3)$. Then the minimax value $c_{\vr}$ is achieved if $\vr$ is small enough. Hence, problem \eqref{SCC1} admits a least energy solution if  $\vr$ is small enough.
\end{lem}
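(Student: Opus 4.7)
My plan is to obtain a critical point of $I_\vr$ at level $c_\vr$ by producing a bounded Palais-Smale sequence, extracting a nonzero weak limit after a translation, and using the strict gap
$$
m_{1,\nu_{\max},\nu_{\max}} < m_{1,\nu_{\infty},\nu_{\infty}}
$$
to rule out that mass escapes to infinity. Both the upper bound $c_\vr\le m_{1,\nu_{\max},\nu_{\max}}+o_\vr(1)$ (Lemma \ref{UB1}) and the strict monotonicity of ground-state levels (Lemma \ref{CO}, applicable because $(Q)$ guarantees $\nu_{\max}>\nu_{\infty}$) are already in hand.

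Since $Q$ is continuous and bounded between two positive constants, the argument of Lemma \ref{mountain:1} applies verbatim and the Mountain Pass Theorem without $(PS)$ produces a sequence $(u_n)\subset E$ with $I_\vr(u_n)\to c_\vr$ and $I'_\vr(u_n)\to 0$; boundedness in $E$ follows from $(f_2)$ and the corresponding property for $g$. I would then repeat the computation of Lemma \ref{PS1} for $I_\vr$: bounding $Q(\vr y)Q(\vr x)\le\nu_{\max}^{2}$ in the critical term, vanishing of $(u_n)$ in the sense of Lions would force
$$
c_\vr \ge \frac{5-\mu}{2(6-\mu)}\,\nu_{\max}^{-\frac{2}{5-\mu}}\,S_{H,L}^{\frac{6-\mu}{5-\mu}}.
$$
A rescaled version of Lemma \ref{EMP1} applied to $\Phi_{1,\nu_{\max},\nu_{\max}}$ gives the strict inequality
$$
m_{1,\nu_{\max},\nu_{\max}} < \frac{5-\mu}{2(6-\mu)}\,\nu_{\max}^{-\frac{2}{5-\mu}}\,S_{H,L}^{\frac{6-\mu}{5-\mu}},
$$
so Lemma \ref{UB1} places $c_\vr$ strictly below the threshold for $\vr$ small. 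Consequently $(u_n)$ is non-vanishing: there exist $r,\delta>0$ and $(y_n)\subset\R^3$ with $\liminf_n\int_{B_r(y_n)}|u_n|^2\,dx\ge\delta$.

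The main obstacle is to rule out $|y_n|\to\infty$. If that occurred, set $\tilde u_n(x):=u_n(x+y_n)$; then $\tilde u_n\rightharpoonup\tilde u\ne 0$ along a subsequence. Since $\vr>0$ is fixed, $|\vr y_n|\to\infty$, and by extracting further one may arrange $Q(\vr(\cdot+y_n))\to Q^*$ locally uniformly with $Q^*\in[\inf Q,\nu_{\infty}]$. Passing to the limit in the translated equation, $\tilde u$ is a nontrivial weak solution of the autonomous problem with coefficients $(1,Q^*,Q^*)$, so $\Phi_{1,Q^*,Q^*}(\tilde u)\ge m_{1,Q^*,Q^*}$. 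A Brezis-Lieb splitting of the position-dependent Choquard term, together with the Fatou-type nonnegativity $g(s)s-G(s)\ge 0$ granted by $(f_2)$, then yields
$$
c_\vr \;=\; \lim_n I_\vr(u_n) \;\ge\; \Phi_{1,Q^*,Q^*}(\tilde u)\;\ge\; m_{1,Q^*,Q^*}\;\ge\; m_{1,\nu_{\infty},\nu_{\infty}},
$$
the last step by Lemma \ref{CO}. Combined with $c_\vr\le m_{1,\nu_{\max},\nu_{\max}}+o_\vr(1)$ and the strict gap $m_{1,\nu_{\max},\nu_{\max}}<m_{1,\nu_{\infty},\nu_{\infty}}$, this is a contradiction for $\vr$ small. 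The delicate point is justifying the Brezis-Lieb decomposition for the nonlocal critical term twisted by the position-dependent coefficient $Q(\vr y)Q(\vr x)$; this requires both the locally uniform convergence of $Q(\vr(\cdot+y_n))$ and the nonlocal refinement of the Brezis-Lieb identity available in the Choquard setting.

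Hence $(y_n)$ is bounded and, after a translation and a further extraction, $u_n\rightharpoonup u\ne 0$ weakly in $E$. Arguing exactly as in the proof of Theorem \ref{AQ} (HLS inequality together with $L^{6/5}$-$L^{6}$ duality to pass to the limit in the Choquard nonlinearity), $u$ is a critical point of $I_\vr$, so $u\in\cal{N}_\vr$ and $I_\vr(u)\ge c_\vr$. The reverse inequality follows from Fatou applied to the rearranged energy
$$
I_\vr(u_n)-\tfrac{1}{2}\langle I'_\vr(u_n),u_n\rangle=\frac{1}{2(6-\mu)}\int_{\R^3}\!\int_{\R^3}\!\frac{Q(\vr y)Q(\vr x)\big(g(u_n(x))u_n(x)-G(u_n(x))\big)G(u_n(y))}{|x-y|^{\mu}}\,dxdy,
$$
whose integrand is nonnegative by $(f_2)$. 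Consequently $I_\vr(u)=c_\vr$, and $u$ is a least energy solution of \eqref{SCC1}.
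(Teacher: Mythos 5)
Your proposal takes a genuinely different route from the paper. You run the classical vanishing/escape-to-infinity dichotomy directly on a $(PS)_{c_\vr}$ sequence at fixed small $\vr$: vanishing is excluded by the rescaled critical threshold $\tfrac{5-\mu}{2(6-\mu)}\nu_{\max}^{-2/(5-\mu)}S_{H,L}^{(6-\mu)/(5-\mu)}$, and escape to infinity is excluded by comparing levels through the limit profile $\tilde u$. The paper instead never follows the non-vanishing centers to infinity: after Ekeland it extracts a bounded $(PS)$ minimizing sequence in $\mathcal{N}_\vr$, assumes the weak limit is zero, projects onto the Nehari manifold of the partially truncated functional $\tilde I^e_\vr$ built from $Q^e=\min\{e,Q\}$ (with $e$ chosen between $\nu_\infty$ and $\nu_{\max}$, so that the superlevel set $\{Q\geq e\}$ is bounded), and closes the argument by chaining $m_{1,\nu_{\max},e}\leq\tilde c^e_{\vr_j}\leq c_{\vr_j}$ (Lemma \ref{PCO}) against $\limsup_j c_{\vr_j}\leq m_{1,\nu_{\max},\nu_{\max}}$ (Lemma \ref{UB1}) and the strict monotonicity $m_{1,\nu_{\max},e}>m_{1,\nu_{\max},\nu_{\max}}$ (Lemma \ref{CO}). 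The truncation buys robustness: it needs only boundedness of $\{Q\geq e\}$, not any convergence of $Q$ along translated sequences.

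Your escape-to-infinity step has a real gap. You assert that, after extraction, $Q(\vr(\cdot+y_n))\to Q^*$ \emph{locally uniformly} to a \emph{constant} $Q^*\in[\inf Q,\nu_\infty]$. Under assumption $(Q)$, $Q$ is merely bounded and continuous, so the family $\{Q(\vr(\cdot+y_n))\}_n$ is not equicontinuous in general and need not possess any locally uniformly (or even a.e.\ pointwise) convergent subsequence; and even granting uniform continuity, the Arzel\`a--Ascoli limit is a continuous function $Q^*(x)\leq\nu_\infty$, not a constant, since $(Q)$ only imposes $\limsup_{|x|\to\infty}Q(x)=\nu_\infty$, not a genuine limit at infinity. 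The subsequent step — passing to the limit in the translated equation so that $\tilde u$ solves an autonomous problem with coefficients $(1,Q^*,Q^*)$, and running Brezis--Lieb/Fatou on the position-dependent Choquard term to obtain $c_\vr\geq\Phi_{1,Q^*,Q^*}(\tilde u)$ — depends on precisely this convergence and is the step you yourself flag as ``delicate'' without supplying a proof. The chain can be salvaged (one only needs $Q^*\leq\nu_\infty$, and then $\Phi_{Q^*}(\tilde u)=\max_t\Phi_{Q^*}(t\tilde u)\geq\max_t\Phi_{1,\nu_\infty,\nu_\infty}(t\tilde u)\geq m_{1,\nu_\infty,\nu_\infty}$), but only after strengthening $Q$ to uniformly continuous and carefully justifying the Fatou argument for the translated nonlocal functional; as written the argument is incomplete at exactly the point where the paper's truncation device is designed to make it unnecessary.
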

\begin{proof}
From Lemma \ref{UB1}, there holds
\begin{equation*}
 \limsup_{\vr\to0}c_{\vr}\leq  m_{1,\nu_{\max}, \nu_{\max}}.
\end{equation*}
Furthermore, we know
\begin{equation*}
 m_{1,\nu_{\max}, \nu_{\max}}<\frac{5-\mu}{2(6-\mu)}\nu_{\max}^{-\frac{2}{5-\mu}}S_{H,L}^{\frac{6-\mu}{5-\mu}}.
\end{equation*}
Since the least energy $c_{\vr}$ can be characterized by
$$
c_{\vr}=\inf_{u\in{\cal{N}}_{\vr}}I_{\vr}(u),
$$
we can choose a minimizing sequence $(u_{n}) \subset {\cal{N}}_{\vr}$ of $I_{\vr}$ such that $I_{\vr}(u_{n})\to c_{\vr}$. By Ekeland's variational principle \cite{MW}, we may also assume it is a bounded $(PS)$ sequence at $c_{\vr}$. Without loss of generality, we assume that $u_n\rightharpoonup u_\vr$ in $E$ with $I'_{\vr}(u_\vr)=0$. To complete the proof, we need to show that $u_{\vr}\neq0$ if $\vr$ is small enough. On the contrary  we assume that there exists a sequence $\vr_j\to0$ with $u_{{\vr}_j}=0$. For each fixed $j$,
let $(u_{n}) \subset {\cal{N}}_{\vr_j}$ be a $(PS)$ sequence of $I_{\vr}$ at $c_{{\vr}_j}$
such that $u_n\rightharpoonup u_{{\vr}_j}=0$ in  $E$. Select $\nu_{\min}\leq e<\nu_{\max}$ and consider the functional $\tilde{I}^{e}_{\vr_j}$.
Note that for each $u_{n}$ there is a unique $t_{n}$ such that $t_{n}u_{n}\in{\tilde{\cal{N}}}^{e}_{\vr_j}$,  we claim that the sequence $(t_{n})$ is bounded. Indeed, suppose by contradiction that  $t_{n}\to \infty$ as $n\to \infty$. Since $(u_{n})$ is bounded and $\|u_{n}\|^2\geq\alpha$, we know that there exist $(y_{n}) \subset \R^{3}$ and $r,\delta >0$ such that
$$
\int_{B_{r}(y_{n})}|u_{n}|^{2}dx \geq \delta, \quad n \in \mathbb{N}.
$$
Otherwise, $u_{n}\to 0$ in
 $L^s(\R^3)$,  $2<s< 6$,
and we can get
$$
\begin{array}{ll}
\displaystyle I_{ \vr_j}(u_{n})-\frac{1}{2}\langle {I'_{\vr_j}}(u_{n}), u_{n} \rangle=\frac{5-\mu}{2(6-\mu)}\int_{\mathbb{R}^3}\int_{\mathbb{R}^3}   \frac{Q(\vr_j y)|u_n(y)|^{6-\mu}Q(\vr_j x)|u_n(x)|^{6-\mu}}{|x-y|^\mu}dxdy+o_n(1).
\end{array}
$$
Notice that $(u_{n}) \subset {\cal{N}}_{\vr_j}$  is bounded minimizing sequence at $c_{\vr_j}$, we have
\begin{equation}\label{E12}
\begin{array}{ll}
\displaystyle\frac12\|u_{n}\|^2-\frac{1}{2(6-\mu)}\int_{\mathbb{R}^3}\int_{\mathbb{R}^3}   \frac{Q(\vr_j y)|u_n(y)|^{6-\mu}Q(\vr_j x)|u_n(x)|^{6-\mu}}{|x-y|^\mu}dxdy\to c_{\vr_j}.
\end{array}
\end{equation}
\begin{equation}\label{E13}
\|u_{n}\|^{2}=\int_{\mathbb{R}^3}\int_{\mathbb{R}^3}   \frac{Q(\vr_j y)|u_n(y)|^{6-\mu}Q(\vr_j x)|u_n(x)|^{6-\mu}}{|x-y|^\mu}dxdy.
\end{equation}
And so we get
\begin{equation}\label{E14}
\|u_{n}\|^{2}\leq \nu^2_{\max}\int_{\mathbb{R}^3}
\int_{\mathbb{R}^3}\frac{|u_{n}(x)|^{6-\mu}|u_{n}(y)|^{6-\mu}}
{|x-y|^{\mu}}dxdy
\leq \nu^2_{\max} S_{H,L}^{\mu-6}\|u_{n}\|^{2(6-\mu)}.
\end{equation}
If $\|u_{n}\|\rightarrow0$, then $c_{\vr_j}=0$, a contradiction. Consequently, $\|u_{n}\|\nrightarrow0$. So by \eqref{E14} we get $\|u_{n}\|\geq \nu_{\max}^{-\frac{1}{5-\mu}} S_{H,L}^{\frac{6-\mu}{2(5-\mu)}}$. Then from \eqref{E12}, \eqref{E13} and \eqref{E14} we easily conclude that $$c_{\vr_j}\geq \frac{5-\mu}{2(6-\mu)}\nu_{\max}^{-\frac{2}{5-\mu}}S_{H,L}^{\frac{6-\mu}{5-\mu}},$$ which contradicts with the assumption. Hence $(u_{n})$ is non-vanishing. Thus, $v_{n}(x)=u_{n}(x+y_n)$ is bounded in $E$ and its weak limit $v \in  E$ is not zero, namely $v \not=0$. Hence, there is $\Omega \subset \R^3$ with $|\Omega|>0$ such that $v(x) >0$ for all $x \in \Omega.$
Since $(u_{n})$ and $V$ are bounded and $\displaystyle \inf_{x \in \R^3}Q(\vr x)>0$,
$$
t_{n}^2\|v_{n}\|^2\geq {t_{n}^{2\zeta}}C
\int_{\R^3}
\int_{\R^3}\frac{|v_{n}(x)|^{\zeta}|v_{n}(y)|^{\zeta}}
{|x-y|^{\mu}}dxdy,
$$
which implies that $(t_{n})$ is bounded. In what follows
we assume that $t_{n}\to t_{0}>0$ as $n\to \infty$. Hence,
\begin{align*}
\tilde{c}^{e}_{\vr_j} &\leq \tilde{I}^{e}_{\vr_j}(t_{n}u_{n})=I_{\vr_j}(t_{n}u_{n})+\frac{1}{(6-\mu)}\int_{\mathbb{R}^3}\int_{\mathbb{R}^3}   \frac{Q(\vr_j  y)|u_n(y)|^{6-\mu}[Q(\vr_j  x)-Q^e(\vr_j x)]F(u_n(x))}{|x-y|^\mu}dxdy \\
&+
\frac{1}{2(6-\mu)}\int_{\mathbb{R}^3}\int_{\mathbb{R}^3}   \frac{[Q(\vr_j  y)Q(\vr_j  x)-Q^e(\vr_j  y)Q^e(\vr_j  x)]F(u_n(x))F(u_n(y))}{|x-y|^\mu}dxdy.
 \end{align*}
Notice that $u_{n}\rightharpoonup u_{\vr_j }=0$ in $E$ and $u_{n} \to 0$ in $L^q_{{\rm loc}}(\R^3)$ for $q$ subcritical.
Choose $t$ and $s$ close to $\frac{6}{6-\mu}$ with
$$
1/t+\mu/3+1/s=2
$$
and apply the Hardy-Littlewood-Sobolev inequality, we know
$$
\Big|\int_{\mathbb{R}^3}
\int_{\mathbb{R}^3}\frac{Q(\vr_j  y)|u_n(y)|^{6-\mu}[Q(\vr_j  x)-Q^e(\vr_j  x)]F(u_n(x))}
{|x-y|^{\mu}}dxdy\Big|\leq C|u_n|^{6-\mu}_{t(6-\mu)}|[Q(\vr_j  x)-Q^e(\vr_j  x)]F(u_n)|_{s}.
$$
Observe that
$$
\int_{\R^3}|[Q(\vr_j x)-Q^e(\vr_j  x)]F(u_n)|^sdx=\int_{\{x: Q(\vr_j x)\geq e\}}|[Q(\vr_j  x)-e]F(u_n)|^sdx=o_n(1),
$$
since ${\{x: Q(\vr_j  x)\geq e\}}$ is bounded and $f(s)$ is of subcritical growth, we know
$$
\int_{\mathbb{R}^3}
\int_{\mathbb{R}^3}\frac{Q(\vr_j  y)|u_n(y)|^{6-\mu}[Q(\vr_j  x)-Q^e(\vr_j  x)]F(u_n(x))}
{|x-y|^{\mu}}dxdy=o_n(1).
$$
Similarly,
$$
 \int_{\mathbb{R}^3}\int_{\mathbb{R}^3}   \frac{[Q(\vr_j  y)Q(\vr_j  x)-Q^e(\vr_j  y)Q^e(\vr_j  x)]F(u_n(x))F(u_n(y))}{|x-y|^\mu}dxdy=o_n(1).
$$
From the above arguments and the fact that $I_{\vr_j}(t_{n}u_{n})\leq I_{\vr_j}(u_{n})$, since $(u_{n}) \subset {\cal{N}}_{\vr_j}$, we know
 $$
\begin{array}{ll}
\displaystyle\tilde{c}^{e}_{\vr_j}\leq I_{\vr_j}(t_{n}u_{n})+o_n(1)\leq I_{\vr_j}(u_{n})+o_n(1).
\end{array}
$$
Hence $c^{e}_{\vr_j}\leq c_{\vr_j} $ as $n\to\infty$. From Lemma \ref{PCO}, since there holds
$\tilde{c}^{e}_{\vr}\geq {m}_{1, \nu_{\max}, e}$,
we know ${m}_{1, \nu_{\max}, e}\leq c_{\vr_j}.$
Taking the limit $j \to +\infty$ and using Lemma \ref{UB1}, we get
${m}_{1, \nu_{\max}, e}\leq m_{1,\nu_{\max}, \nu_{\max}},$
applying Lemma \ref{CO} with the fact that $e<\nu_{\max}$, this yields a contradiction that is $u_\vr\neq 0$. Then, repeat the arguments in section 3, we know $I_\vr(u_\vr)=c_\vr$ which finishes the proof.
\end{proof}

\noindent
The following Brezis-Lieb type lemma, here specialized for $N=3$, for the nonlocal term is proved in \cite{GY}.

\begin{lem} \label{BLN}
	Let $0<\mu<3$. If $(u_{n})$ is a bounded sequence in $L^{6}(\mathbb{R}^3)$ such that $u_{n}\rightarrow u$ almost everywhere in $\mathbb{R}^3$,
	then the following hold,
$$
\int_{\mathbb{R}^3}(|x|^{-\mu}\ast |u_{n}|^{6-\mu})|u_{n}|^{6-\mu}dx-\int_{\mathbb{R}^3}(|x|^{-\mu}\ast |u_{n}-u|^{6-\mu})|u_{n}-u|^{6-\mu}dx\rightarrow\int_{\mathbb{R}^3}(|x|^{-\mu}\ast |u|^{6-\mu})|u|^{6-\mu}dx,
$$
as $n\rightarrow\infty$.
\end{lem}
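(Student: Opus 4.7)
The plan is to reduce this nonlocal Brezis--Lieb identity to the classical $L^p$ Brezis--Lieb lemma applied to the sequence $\bigl(|u_n|^{6-\mu}\bigr)$ in $L^{6/(6-\mu)}(\R^3)$, combined with the continuity of the Riesz convolution furnished by the Hardy--Littlewood--Sobolev inequality (Proposition~\ref{HLS}). Throughout, I write $w_n := u_n - u$, so that $w_n \to 0$ a.e.\ and $(w_n)$ is bounded in $L^6(\R^3)$.

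First I would apply the standard Brezis--Lieb lemma to the function $t \mapsto |t|^{6-\mu}$. Since $(u_n)$ is bounded in $L^6$ and $u_n \to u$ a.e., the sequence $\bigl(|u_n|^{6-\mu}\bigr)$ is bounded in $L^{6/(6-\mu)}$, and one obtains
$$
\bigl\||u_n|^{6-\mu} - |w_n|^{6-\mu} - |u|^{6-\mu}\bigr\|_{L^{6/(6-\mu)}(\R^3)} \longrightarrow 0
$$
as $n \to \infty$. Equivalently, setting $\varphi_n := |u_n|^{6-\mu}$ and $\psi_n := |w_n|^{6-\mu}$, we have $\varphi_n = \psi_n + |u|^{6-\mu} + r_n$ with $r_n \to 0$ strongly in $L^{6/(6-\mu)}$.

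Next I would introduce the bilinear form
$$
B(f,g) := \int_{\R^3}\int_{\R^3} \frac{f(x) g(y)}{|x-y|^{\mu}}\,dx\,dy.
$$
By Proposition~\ref{HLS} with $t = r = 6/(6-\mu)$, $B$ is a continuous symmetric bilinear form on $L^{6/(6-\mu)}(\R^3) \times L^{6/(6-\mu)}(\R^3)$. Using the decomposition $\varphi_n = \psi_n + |u|^{6-\mu} + r_n$ and the boundedness of $(\varphi_n),(\psi_n)$ in $L^{6/(6-\mu)}$, expanding $B(\varphi_n,\varphi_n)$ via the bilinearity yields
$$
B(\varphi_n,\varphi_n) - B(\psi_n,\psi_n) - B(|u|^{6-\mu},|u|^{6-\mu}) = 2 B(\psi_n,|u|^{6-\mu}) + o(1),
$$
where the $o(1)$ terms collect all contributions involving $r_n$, each of which vanishes by continuity of $B$.

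The remaining and main step is to show $B(\psi_n,|u|^{6-\mu}) \to 0$. Since $(\psi_n)$ is bounded in $L^{6/(6-\mu)}$ and converges to $0$ a.e., it converges weakly to $0$ in $L^{6/(6-\mu)}$. By a second application of HLS, the Riesz convolution $h := |x|^{-\mu} \ast |u|^{6-\mu}$ belongs to $L^{6/\mu}(\R^3)$, which is the topological dual of $L^{6/(6-\mu)}$; hence duality gives $\int \psi_n h\,dx \to 0$, i.e.\ $B(\psi_n,|u|^{6-\mu}) \to 0$. Combining the three steps proves the claimed identity. The most delicate point is packaging the Brezis--Lieb decomposition of $|u_n|^{6-\mu}$ with the bilinear HLS bound so that all error terms genuinely go to zero in the right norm, but both ingredients are standard once set up this way.
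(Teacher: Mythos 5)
Your proof is correct and is essentially the argument that the authors defer to: the paper gives no in-line proof of Lemma~\ref{BLN} but cites \cite{GY}, where this nonlocal Brezis--Lieb identity is established by exactly the route you describe, namely a Brezis--Lieb decomposition of $|u_n|^{6-\mu}$ in $L^{6/(6-\mu)}$, bilinear continuity of $B$ via HLS, and weak convergence of $|u_n-u|^{6-\mu}\rightharpoonup 0$ in $L^{6/(6-\mu)}$ paired against $|x|^{-\mu}*|u|^{6-\mu}\in L^{6/\mu}$. One small precision is warranted: the classical Brezis--Lieb lemma for $j(t)=|t|^{6-\mu}$ yields only $L^1$ convergence of $|u_n|^{6-\mu}-|w_n|^{6-\mu}-|u|^{6-\mu}$; the stronger $L^{6/(6-\mu)}$ convergence you invoke is a (well-known) $L^q$ variant, which one proves directly from the pointwise bound $\big||a+b|^{p}-|a|^{p}-|b|^{p}\big|\le C\big(|a|^{p-1}|b|+|a||b|^{p-1}\big)$ with $p=6-\mu>3$, followed by the same weak-convergence argument you already use for the cross term, noting $(p-1)q+q=6$. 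With that clarification the proposal is essentially the same proof as the cited one.
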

\begin{lem}\label{CP}
Let $(u_{n})$ be the sequence of solutions obtained in Lemma \ref{Existence} with parameter $\vr_{n}\to0$. Then, there is $y_{n}\in\R^3$ such that
$$
\lim_{n\to\infty}\dist ({\vr_n}y_{n},  \mathcal{Q})=0,
$$
such that the sequence $v_{n}(x):=u_{n}(x+y_{n})$ converges strongly in $E$ to a ground state solution $v$ of
    $$
-\Delta v +v  =\nu^2_{\max}\left(\int_{\mathbb{R}^3}\frac{|v(y)|^{6-\mu}+F(v(y))}{|x-y|^{\mu}}dy\right)\Big(|v|^{4-\mu}v+ \frac{1}{6-\mu}f(v)\Big) \quad \mbox{in $\mathbb{R}^{3}$}.
$$
\end{lem}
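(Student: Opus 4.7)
\textbf{Proof plan for Lemma \ref{CP}.} The first step is to produce the translations $y_n$. From Lemma \ref{Existence}, $(u_n)$ is a bounded sequence of critical points of $I_{\vr_n}$ at level $c_{\vr_n}$, and from Lemma \ref{UB1} we have $\limsup_n c_{\vr_n} \leq m_{1,\nu_{\max},\nu_{\max}} < \frac{5-\mu}{2(6-\mu)}\nu_{\max}^{-2/(5-\mu)}S_{H,L}^{(6-\mu)/(5-\mu)}$. Arguing exactly as in the nonvanishing Lemma \ref{PS1}, but now with the potential $Q(\vr_n \cdot)$ in front of the critical term, the bound $c_{\vr_n}$ below the critical threshold prevents vanishing, so there exist $r,\delta>0$ and $(y_n)\subset\R^3$ with $\int_{B_r(y_n)}|u_n|^2\,dx\geq\delta$. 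Setting $v_n(x):=u_n(x+y_n)$, the sequence $v_n$ is bounded in $E$ and, up to a subsequence, $v_n\rightharpoonup v$ in $E$, $v_n\to v$ a.e., with $v\neq 0$ thanks to the concentration condition.

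The second step is to locate $\vr_n y_n$ asymptotically. Up to extraction, one of two scenarios occurs: $\vr_n y_n\to y_0\in\R^3$, or $|\vr_n y_n|\to\infty$. In either case, $Q(\vr_n(x+y_n))$ converges locally uniformly to a constant $Q_\ast$, with $Q_\ast=Q(y_0)$ in the first case and $Q_\ast\leq\nu_\infty$ in the second. Passing to the limit in the equation satisfied by $v_n$ (using the convergence of the nonlocal convolution against a fixed test function $\varphi\in C_0^\infty(\R^3)$, in the spirit of the argument following Lemma \ref{PS1}) shows that $v$ is a nontrivial weak solution of
\begin{equation*}
-\Delta v+v=\frac{Q_\ast^2}{6-\mu}\Big(\int_{\R^3}\frac{G(v(y))}{|x-y|^\mu}dy\Big)g(v)\quad\mbox{in }\R^3,
\end{equation*}
hence $\Phi_{1,Q_\ast,Q_\ast}(v)\geq m_{1,Q_\ast,Q_\ast}$. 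On the other hand, by Fatou's lemma applied to the nonnegative integrand
$\frac{1}{2(6-\mu)}[g(u)u-G(u)]\cdot G(u)$ that represents $I_{\vr_n}(u_n)-\tfrac{1}{2}\langle I'_{\vr_n}(u_n),u_n\rangle$, and by translation invariance of integrals, we obtain $m_{1,Q_\ast,Q_\ast}\leq \Phi_{1,Q_\ast,Q_\ast}(v)\leq \liminf_n c_{\vr_n}$. Combined with $\limsup_n c_{\vr_n}\leq m_{1,\nu_{\max},\nu_{\max}}$ and the strict monotonicity of Lemma \ref{CO}, the inequality $Q_\ast\leq\nu_{\max}$ is forced to be an equality; in particular the case $|\vr_n y_n|\to\infty$ (which would give $Q_\ast\leq\nu_\infty<\nu_{\max}$) is excluded, and up to a subsequence $\vr_n y_n\to y_0$ with $Q(y_0)=\nu_{\max}$, i.e.\ $\dist(\vr_n y_n,\mathcal{Q})\to 0$.

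The third step is the upgrade to strong convergence in $E$. The chain of inequalities in step two is forced to collapse, giving
\begin{equation*}
\lim_n I_{\vr_n}(u_n)=\Phi_{1,\nu_{\max},\nu_{\max}}(v)=m_{1,\nu_{\max},\nu_{\max}}.
\end{equation*}
Writing $w_n:=v_n-v$, so that $w_n\rightharpoonup 0$ in $E$, the Brezis--Lieb type splitting of Lemma \ref{BLN} for the critical convolution term, together with the analogous (simpler) splitting for the subcritical terms involving $F$ and $f$ (where local compactness gives vanishing cross terms), yields
\begin{equation*}
\|v_n\|^2=\|v\|^2+\|w_n\|^2+o(1),
\end{equation*}
and decomposes the nonlocal energy and its derivative along $v$ and $w_n$. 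Plugging these expansions into $I_{\vr_n}(u_n)\to m_{1,\nu_{\max},\nu_{\max}}$ and into the Nehari identity $\langle I'_{\vr_n}(u_n),u_n\rangle=0$, and using that $v$ is itself a critical point at level $m_{1,\nu_{\max},\nu_{\max}}$, we arrive at
\begin{equation*}
\tfrac{1}{2}\|w_n\|^2-\tfrac{\nu_{\max}^2}{2(6-\mu)}\iint\frac{|w_n(x)|^{6-\mu}|w_n(y)|^{6-\mu}}{|x-y|^\mu}dxdy=o(1),\qquad \|w_n\|^2-\nu_{\max}^2\iint\frac{|w_n(x)|^{6-\mu}|w_n(y)|^{6-\mu}}{|x-y|^\mu}dxdy=o(1).
\end{equation*}
Combining these identities gives that the common value of the critical nonlocal integral is $o(1)$, and via $S_{H,L}$ we conclude $\|w_n\|^2\to 0$ unless $\|w_n\|^2\geq\nu_{\max}^{-2/(5-\mu)}S_{H,L}^{(6-\mu)/(5-\mu)}$, a lower bound which, if attained, would force $\lim c_{\vr_n}\geq \Phi_{1,\nu_{\max},\nu_{\max}}(v)+\frac{5-\mu}{2(6-\mu)}\nu_{\max}^{-2/(5-\mu)}S_{H,L}^{(6-\mu)/(5-\mu)}$, contradicting the upper bound from Lemma \ref{UB1}. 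Hence $w_n\to 0$ in $E$, i.e.\ $v_n\to v$ strongly, and $v$ is a ground state of the stated limit equation.

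The main obstacle is the third step: because the nonlinearity is critical in the sense of Hardy--Littlewood--Sobolev, the Brezis--Lieb decomposition must be handled carefully for the convolution term and the possible ``bubble'' has to be excluded by the sharp level estimate, which is exactly why the upper bound $c_{\vr_n}<\frac{5-\mu}{2(6-\mu)}\nu_{\max}^{-2/(5-\mu)}S_{H,L}^{(6-\mu)/(5-\mu)}$ provided by Lemmas \ref{UB1} and \ref{EMP1} is indispensable.
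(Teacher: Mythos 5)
Your proposal is correct and its overall scaffolding (nonvanishing from the level bound, localization of $\vr_n y_n$ via Fatou + the monotonicity Lemma~\ref{CO}, strong convergence via a Brezis--Lieb splitting) matches the paper's. The real divergence is in the final step. The paper establishes $\tilde{I}_{\vr_n}(v_n-v)\to 0$ and $\tilde{I}'_{\vr_n}(v_n-v)\to 0$ and then invokes the coercivity estimate
$$
\|w\|^2\leq C\Big(\tilde{I}_{\vr_n}(w)-\tfrac{1}{\theta(6-\mu)}\langle\tilde{I}'_{\vr_n}(w),w\rangle\Big),
$$
which follows from the Ambrosetti--Rabinowitz condition (Remark~\ref{R1}); this forces $\|v_n-v\|\to 0$ with no further appeal to the critical level. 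You instead write the energy and Nehari identities for $w_n=v_n-v$ and argue by a Brezis--Nirenberg-type bubble exclusion. Both work. In fact your two displayed identities already give the conclusion by elimination: the Nehari identity yields $\|w_n\|^2=\nu_{\max}^2 D(w_n)+o(1)$, which substituted into the energy identity gives $\frac{5-\mu}{2(6-\mu)}\nu_{\max}^2 D(w_n)=o(1)$, hence $D(w_n)\to 0$ and $\|w_n\|\to 0$ directly; the dichotomy ``$\|w_n\|\to 0$ or $\|w_n\|^2\geq \nu_{\max}^{-2/(5-\mu)}S_{H,L}^{(6-\mu)/(5-\mu)}$'' and the contradiction with Lemma~\ref{UB1} are therefore not needed and somewhat obscure the logic. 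A related correction: you describe the third step as the place where the sharp threshold $\frac{5-\mu}{2(6-\mu)}\nu_{\max}^{-2/(5-\mu)}S_{H,L}^{(6-\mu)/(5-\mu)}$ is ``indispensable,'' but in the paper's approach (and in the streamlined version of yours) that bound is actually consumed in Step~1 (nonvanishing, as in Lemma~\ref{PS1}) and, via Lemma~\ref{EMP1}, in establishing existence of the limiting ground state; once $\Phi_{1,\nu_{\max},\nu_{\max}}(v)=m_{1,\nu_{\max},\nu_{\max}}=\lim c_{\vr_n}$ is known, the strong convergence in Step~3 follows without invoking the threshold again. Finally, when you replace $Q(\vr_n(x+y_n))$ by the constant $\nu_{\max}$ in the critical term for $w_n$ you should state (as the paper does) that this uses the uniform continuity of $Q$ and $\vr_n y_n\to y_0$; this is a small but real approximation step in the Brezis--Lieb splitting.
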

\begin{proof}
Let $(u_{n})$ be the  sequence of solutions  obtained in Lemma \ref{Existence} with parameter $\vr_{n}\to0$. It is easy to see that $(u_{n})$ is bounded in $E$. Moreover, repeat the arguments in Lemma \ref{Existence}, we know that there exist $r, \delta>0$ and a sequence $(y_{n}) \subset \R^3$ such that
\begin{equation}\label{J0}
\liminf_{n\to\infty}\int_{B_r(y_{n})}|u_{n}|^2dx\geq\delta.
\end{equation}
Setting $v_{n}(x):=u_{n}(x+y_{n})$ and $\widetilde{Q}_{\vr_n}(x):=Q(\vr_n (x+y_{n}))$, we see that $v_{n}$ solves problems
\begin{equation*}
-\Delta v+v =\frac{1}{6-\mu}\Big(\int_{\R^3}  \frac{\widetilde{Q}_{\vr_n}(y)G( v(y))}{|x-y|^\mu}dy\Big)\widetilde{Q}_{\vr_n}(x)g(v(x)) \quad \mbox{in $\R^{3}$}.
\end{equation*}
We shall use $\tilde{I}_{\vr_n}$ to denote the corresponding energy functional. Since $v_{n}(x):=u_{n}(x+y_{n})$ is also bounded, from  \eqref{J0}, we may assume that $v_{n}\rightharpoonup v$ in $E$ with $v\neq0$ and $v \geq 0$.
The sequence $(\vr_ny_{n})$ must be bounded and up to sequence $\vr_ny_{n}\to y_{0}\in \mathcal{Q}$. Argue by contradiction, we assume that $\vr_ny_{n}\to\infty$, as $n\to\infty$, we may suppose that $Q(\vr_n y_{n})\to Q_0< \nu_{\max}$. Since $\langle\tilde{I}'_{\vr_n}(v_{n}),\varphi\rangle=0$
for any $\varphi\in C^{\infty}_0(\R^3)$, equivalently, we have
\begin{equation}\label{Lim1}
\begin{array}{ll}
\displaystyle\int_{\R^3} (\nabla v_{n}\nabla \varphi+ v_{n}\varphi)dx-\frac{1}{6-\mu}\int_{\R^3}\Big(\int_{\R^3}  \frac{\widetilde{Q}_{\vr_n}(y)G( v_n(y))}{|x-y|^\mu}dy\Big)\widetilde{Q}_{\vr_n}(x)g(v_n(x))\varphi(x) dx=0.
\end{array}
\end{equation}
From the regularity arguments in the section 3, we know $$
\left|\int_{\R^3}  \frac{G(v_n(y))}{|x-y|^\mu}dy \right|_{\infty}
\leq C\Big(|v_n|_{\frac{3(6-\mu)}{3-\mu}}^{6-\mu}+
|v_n|_{\frac{3q}{3-\mu}}^{q}+|v_n|_{\frac{3p}{3-\mu}}^{p}\Big),
$$
thus
$$
C=\sup_{n\in\N}\sup_{x\in\R^3}\Big|\int_{\R^3}  \frac{\widetilde{Q}_{\vr_n}(y)G( v_n(y))}{|x-y|^\mu}dy\Big|<\infty.
$$
Whence, we have
$$
\begin{array}{ll}
\displaystyle\Big|\int_{\R^3}\Big(\int_{\R^3}  \frac{\widetilde{Q}_{\vr_n}(y)G(v_n(y))}{|x-y|^\mu}dy\Big)\Big(\widetilde{Q}_{\vr_n}(x)g(v_n(x))-Q_0g(v(x))\Big)\varphi(x) dx\Big|
\displaystyle\leq C\Big|\int_{\R^3}\Big(\widetilde{Q}_{\vr_n}(x)g(v_n(x))-Q_0g(v(x))\Big)\varphi(x) dx\Big|.
\end{array}
 $$
Then, since
$$
\int_{\R^3}\Big(\widetilde{Q}_{\vr_n}(x)g(v_n(x))-Q_0g(v(x))\Big)\varphi(x)dx\to 0,\quad  \forall \varphi \in C^{\infty}_0(\R^3),
$$
we have
\begin{equation*}
\Big|\int_{\R^3}\Big(\int_{\R^3}  \frac{\widetilde{Q}_{\vr_n}(y)G(v_n(y))}{|x-y|^\mu}dy\Big)\Big(\widetilde{Q}_{\vr_n}(x)g(v_n(x))-Q_0g(v(x))\Big)\varphi(x) dx\Big|\to 0, \quad  \forall \varphi \in C^{\infty}_0(\R^3).
\end{equation*}
Repeating the arguments in the proof of Theorem \ref{AQ}, we obtain
$$
\begin{array}{ll}
\displaystyle\Big|\int_{\R^3}\Big(\int_{\R^3}  \frac{\widetilde{Q}_{\vr_n}(y)G( v_n(y))-Q_0G(v(y))}{|x-y|^\mu}dy\Big)g(v(x))\varphi(x) dx\Big|\to 0,\quad   \forall \varphi \in C^{\infty}_0(\R^3).
\end{array}
 $$
Taking the  limit in equation \eqref{Lim1}, we get
 that $v$ is nothing but a solution of the equation
 $$
-\Delta v +v  =Q_0^2\left(\int_{\mathbb{R}^3}\frac{|v(y)|^{6-\mu}+F(v(y))}{|x-y|^{\mu}}dy\right)\Big(|v|^{4-\mu}v+ \frac{1}{6-\mu}f(v)\Big) \quad \mbox{in} \quad \mathbb{R}^{3}.
$$
Observe that
$I_{\vr_n}(u_{n})=\tilde{I}_{\vr_n}(v_{n}),$
and by  Fatou's Lemma and Lemma \ref{CO}, we can get
$$
\aligned
& m_{1, \nu_{\max},\nu_{\max} }<m_{1, Q_0, Q_0}
\leq\Phi_{1, Q_0, Q_0}(v)\\
&= \Phi_{1, Q_0, Q_0}(v)-\frac{1}{\theta(6-\mu)}\langle \Phi'_{1, Q_0, Q_0}(v), v\rangle\\
&=\Big(\frac12-\frac{1}{\theta(6-\mu)}\Big)\|v\|^2+\frac{Q^2_0}{2\theta(6-\mu)} \int_{\mathbb{R}^3}\int_{\mathbb{R}^3}\frac{G(v(y))[2g(v(x))v(x)-\theta G(v(x))]}{|x-y|^{\mu}} dxdy\\
&\leq\liminf_{n\to\infty}\Big\{\Big(\frac12-\frac{1}{\theta(6-\mu)}\Big)\|v_n\|^2+\frac{1}{2\theta(6-\mu)} \int_{\mathbb{R}^3}\int_{\mathbb{R}^3}\frac{{Q}_{\vr_n}(y)G(v_n(y)){Q}_{\vr_n}(x)[2g(v_n(x))v_n(x)-\theta G(v_n(x))]}{|x-y|^{\mu}}dxdy\Big\}\\
&=\liminf_{n\to\infty}\Big\{\tilde{I}_{\vr_n}(v_{n})-\frac{1}{\theta(6-\mu)}\langle \tilde{I}'_{\vr_n}(v_n), v_n\rangle\Big\}
=\liminf_{n\to\infty}c_{\vr_n}.
 \endaligned
 $$
This contradicts to Lemma \ref{UB1} which says
 $$
 \limsup_{n\to\infty}c_{\vr_n}\leq  m_{1, \nu_{\max},\nu_{\max} }.
 $$
Thus $(\vr_ny_{n})$ is bounded and we may assume that $\vr_ny_{n}\to y_{0}$.
Next we are going to prove $
y_0\in \mathcal{Q}.
$ If $
y_0\notin \mathcal{Q}$, by the definitions of $\mathcal{Q}$,  then it is easy to see $$ m_{1, \nu_{\max},\nu_{\max} }<m_{1, Q(y_0), Q(y_0)}.$$
Let $v$ be the weak limit of the sequence $v_{n}(x):=u_{n}(x+y_{n})$ then
 $v$ satisfies
\begin{equation}\label{Lim3}
-\Delta v +v  =Q(y_0)^2\left(\int_{\mathbb{R}^3}\frac{|v(y)|^{6-\mu}+F(v(y))}{|x-y|^{\mu}}dy\right)\Big(|v|^{4-\mu}v+ \frac{1}{6-\mu}f(v)\Big)
\end{equation}
and
$$
\begin{array}{ll}
\displaystyle  m_{1, \nu_{\max},\nu_{\max} }<m_{1, Q(y_0), Q(y_0)}\leq\liminf_{n\to\infty}c_{\vr_n}.
 \end{array}
 $$
which contradicts Lemma \ref{UB1}, since
 $$
 \limsup_{n\to\infty}c_{\vr_n}\leq m_{1, \nu_{\max},\nu_{\max} }.
 $$
Therefore $y_0\in \mathcal{Q}$, which means $\dist (\vr_n y_n, \mathcal{Q})\to 0$.
By repeating the arguments in Lemma \ref{AFE}, we get
$$
\lim_{n\to\infty}\tilde{I}_{\vr_n}(v_{n})\leq m_{1, Q(y_0), Q(y_0)}=m_{1, \nu_{\max}, \nu_{\max}},
$$
consequently,
$$
\Phi_{1,\nu_{\max}, \nu_{\max}}(v)=\Phi_{1,Q(y_0),Q(y_0)}(v)=m_{1, \nu_{\max}, \nu_{\max}},
$$
 and so
  $v$ in fact is  a ground state solution of the equation \eqref{Lim3} with $Q(y_0)=\nu_{\max}$.
Finally we show that $(v_{n})$ converges strongly to $v$ in $E$. Since $Q$ is uniformly continuous, using Lemma \ref{BLN},
$$
\tilde{I}_{\vr_n}(v_{n}-v)=\tilde{I}_{\vr_n}(v_{n})-\Phi_{1,\nu_{\max}, \nu_{\max}}(v)+o_n(1).
$$
Since
$$
\lim_{n\to\infty}\tilde{I}_{\vr_n}(v_{n})=\Phi_{1,\nu_{\max}, \nu_{\max}}(v),
$$
it follows that $\tilde{I}_{\vr_n}(v_{n}-v)\to 0.$
Similarly, $\tilde{I}'_{\vr_n}(v_{n}-v)\to0,$
which implies
$$
\lim_{n\to\infty}\langle\tilde{I}'_{\vr_n}(v_{n}-v), v_{n}-v\rangle=0.
$$
Hence,
$$
\|v_{n}-v\|^2\leq C\lim_{n\to\infty}\Big(\tilde{I}_{\vr_n}(v_{n}-v)-\frac{1}{\theta(6-\mu)}\langle\tilde{I}'_{\vr_n}(v_{n}-v),v_{n}-v\rangle\Big)=0,
$$
showing that $v_{n}\to v$ in $E$. This ends the proof.
\end{proof}
\begin{lem}\label{0AI}
Let  $(v_{n})$ be the sequence obtained in Lemma \ref{CP}. Then, there exists $C>0$ independent of $n$ such that $|v_{n}|_{{\infty}}\leq C$ and
$v_{n}(x)\to 0$ as $|x|\to\infty$, uniformly in $n\in \mathbb{N}.$
Furthermore there are $C$, $\beta>0$ with
$$
|v_{n}(x)|\leq C \exp(-\beta|x|),\quad \forall x\in \R^3.
$$
\end{lem}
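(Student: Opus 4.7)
The plan is to exploit the fact, already used in the proof of Lemma~\ref{CP}, that the convolution factor entering the equation for $v_n$ is bounded in $L^\infty(\R^3)$ uniformly in $n$. Writing
$$
W_n(x):=\frac{1}{6-\mu}\int_{\R^3}\frac{\widetilde{Q}_{\vr_n}(y)G(v_n(y))}{|x-y|^{\mu}}\,dy,
$$
each $v_n$ solves $-\Delta v_n+v_n=W_n(x)\widetilde{Q}_{\vr_n}(x)g(v_n)$ with $\sup_n\|W_n\widetilde{Q}_{\vr_n}\|_\infty<\infty$, so the problem is effectively a local semilinear equation with a uniformly bounded coefficient and a nonlinearity satisfying $|g(s)|\leq C(|s|^{5-\mu}+|s|^{p-1}+|s|^{q-1})$. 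Each of these exponents is strictly below the Sobolev value $2^{\ast}-1=5$, thanks to $\mu>0$ and $p<6-\mu<6$.

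First I would derive the uniform $L^\infty$-bound by a Moser iteration carried out locally on balls of fixed radius, uniformly in $n$ and in the centre $x\in\R^3$: because all exponents in the source term are Sobolev-subcritical, the standard scheme yields
$$
\|v_n\|_{L^\infty(B_{1/2}(x))}\leq C\,\|v_n\|_{H^1(B_1(x))},
$$
with $C$ depending only on $\sup_n\|v_n\|_{H^1(\R^3)}$, which is finite by Lemma~\ref{CP}. For the uniform tail decay I combine the same local sup-estimate with the strong $H^1$-convergence $v_n\to v$ of Lemma~\ref{CP}: given $\eta>0$, first pick $N$ so that $\|v_n-v\|_{H^1}<\eta$ for $n\geq N$, then use the integrable tail of $v$ together with the individual decay at infinity of the finitely many functions $v_1,\ldots,v_{N-1}$ to find $R_0>0$ with $\sup_n\|v_n\|_{H^1(B_1(x))}\leq C\eta$ whenever $|x|\geq R_0$. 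The local sup-estimate then upgrades this to $\sup_n\|v_n\|_{L^\infty(B_{1/2}(x))}\to 0$ as $|x|\to\infty$, which is the claimed uniform decay.

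Once uniform decay at infinity is in hand, the exponential bound follows by a standard comparison argument mirroring Lemma~\ref{EDP}. Since $(f_1)$ forces $q>1$ (as $(6-\mu)/3>1$ for $\mu<3$) and $5-\mu>1$, one has $g(s)/s\to 0$ as $s\to 0^{+}$; combined with $\sup_n\|W_n\widetilde{Q}_{\vr_n}\|_\infty<\infty$ this lets me fix $\eta>0$ small so that
$$
W_n(x)\widetilde{Q}_{\vr_n}(x)g(s)\leq \tfrac{1}{2}s,\qquad \forall\, n,\ \forall\, 0\leq s\leq\eta.
$$
By the uniform tail decay there is $R>0$ with $v_n(x)\leq\eta$ for every $n$ and every $|x|\geq R$, hence $-\Delta v_n+\tfrac{1}{2}v_n\leq 0$ on $\R^3\setminus\overline{B_R}$. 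Choosing $\beta\in(0,1/\sqrt{2})$ makes $\Phi(x):=Me^{-\beta(|x|-R)}$ a positive classical supersolution of $-\Delta\Phi+\Phi/2=0$ on the exterior of $B_R$; taking $M\geq\sup_n\|v_n\|_\infty$ ensures $\Phi\geq v_n$ on $\partial B_R$, and since both $v_n$ and $\Phi$ vanish at infinity the maximum principle on $\R^3\setminus\overline{B_R}$ gives $v_n\leq\Phi$ there. Absorbing the behaviour on $B_R$ into the constant yields $|v_n(x)|\leq Ce^{-\beta|x|}$ on all of $\R^3$, uniformly in $n$.

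The main technical obstacle is not any single step but rather arranging uniformity in $n$ throughout: the Moser constants must be chosen independent of $n$ (for which only the uniform $H^1$-bound and the uniform $L^\infty$-bound of the nonlocal coefficient $W_n\widetilde{Q}_{\vr_n}$ are needed), and the passage from individual to uniform decay in Step~2 genuinely uses the strong $H^1$-convergence from Lemma~\ref{CP} rather than mere boundedness.
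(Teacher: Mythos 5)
Your argument follows the paper's own route: the key inputs in both are (i) the uniform $L^\infty$ bound on the nonlocal coefficient $W_n\widetilde{Q}_{\vr_n}$, (ii) elliptic regularity to get the uniform $L^\infty$ bound and decay of $v_n$ (you run a local Moser iteration on unit balls, while the paper invokes the global $L^q$ bootstrap from Section~3 and \cite{MS2}; these are interchangeable here), (iii) the strong $H^1$-convergence $v_n\to v$ from Lemma~\ref{CP} to upgrade pointwise to uniform decay at infinity, and (iv) a barrier/comparison argument for the exponential bound, exactly as in Lemma~\ref{EDP}.

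One caveat. Your assertion that ``$(f_1)$ forces $q>1$ \dots\ hence $g(s)/s\to0$ as $s\to0^+$'' is not a valid implication. From $(f_1)$ one has $|f(s)|/s\le c_0\bigl(|s|^{q-2}+|s|^{p-2}\bigr)$ near $0$, and since $(6-\mu)/3\in(1,2)$ for $0<\mu<3$, the hypothesis only guarantees $q>(6-\mu)/3$, which allows $q<2$; in that case $|s|^{q-2}\to\infty$ and the inequality $W_n\widetilde{Q}_{\vr_n}g(s)\le\tfrac12 s$ cannot be achieved by taking $s$ small. The comparison step really requires $q\ge2$ (equivalently, that $f$ be superlinear at the origin). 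This is not a defect peculiar to your write-up: the same implicit hypothesis underlies the paper's own Lemma~\ref{EDP} (``for $|x|\ge\rho_0$ the right hand side is negative''), and it holds in the model case $F(u)=|u|^\zeta$ with $\zeta>5-\mu>2$. You should simply state it as an assumption rather than claim it follows from $q>1$.
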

\begin{proof}
From Lemma \ref{CP} we know that ${\vr_n} y_{n}\to y_0\in \mathcal{Q}$
as $n\to\infty$ and the sequence $v_{n}(x):=u_{n}(x+y_{n})$ converges strongly to
a solution $v$ of the equation
$$
-\Delta v +v  =Q(y_0)^2\left(\int_{\mathbb{R}^3}\frac{|v(y)|^{6-\mu}+F(v(y))}{|x-y|^{\mu}}dy\right)\Big(|v|^{4-\mu}v+ \frac{1}{6-\mu}f(v)\Big) \quad \mbox{in} \quad \mathbb{R}^{3}.
$$
From the regularity arguments in section 2,
$$
\sup_{n\in\N}\sup_{x\in\R^3}\Big|\int_{\R^3}  \frac{\widetilde{Q}_{\vr_n}(y)G( v_n(y))}{|x-y|^\mu}dy\Big|<\infty,
$$
and $v_{n}\in L^{q}(\R^3)$ for all $2\leq q<\infty$. Furthermore, the elliptic regularity theory implies that $v_{n}\in C^{2}(\R^3)$ and
\begin{equation*}
 -\Delta v_{n} \leq h(v_{n})\quad \mbox{in} \quad \mathbb{R}^3,
\end{equation*}
where $h(v_{n})\in  L^{t}(\R^3)$, $t>\frac32$. Then, we learn that $|v_{n}|_{\infty}\leq C$ and
$$
\lim\limits_{\mid x\mid\rightarrow\infty}{v_{n}}(x)=0 \quad \mbox{uniformly in $n \in \mathbb{N}$}.
$$
Recall that by (\ref{J0}),
$$
 \delta \leq \int_{B_{r}(y_n)}|u_n|^{2}dx,
$$
then we obtain
$$
\delta\leq\int_{B_{r}(0)}|v_n|^{2}dx \leq |B_r||v_n|^2_{\infty},
$$
from where it follows
$|v_n|_{\infty} \geq \delta'$.
That means there exists $\delta' >0$ such that $|v_{n}|_{\infty}\geq \delta'$ for all $n\in \mathbb{N}$.
The exponential decay property follows from a standard comparison arguments.
\end{proof}

\subsection{Concentration behavior} If $u_{\vr_{n}}$ is a solution of problem $(SCC1)$, then
${v_n}(x)=u_{\vr_{n}}(x+{y}_n)$ solves
\[
-\Delta v_n+{v_n}=\frac{1}{6-\mu}\Big(\int_{\R^3}  \frac{\widetilde{Q}_{\vr_n}(y)G(v_n(y))}{|x-y|^\mu}dy\Big)\widetilde{Q}_{\vr_n}(x)g({v_n})\quad  \hbox{in $\R^{3}$},
\]
with $\widetilde{Q}_{\vr_n}(x)=Q(\vr_n x+\vr_ny_n)$ and $({y}_n) \subset \R^{3}$ given in Lemma \ref{CP}. Moreover, up to a subsequence,
$$
v_n\rightarrow v \quad \mbox{in $E$} ,\quad   \tilde{y}_n\rightarrow y_0 \in \mathcal{Q},
$$
where $\tilde{y}_n=\vr_{n}{y}_n$. If $b_n$ denotes a maximum point of $v_n$, from Lemma \ref{0AI}, we know it is a bounded sequence in $\R^3$. Thus, there is $R>0$ such that
$b_n\in B_{R}(0)$. Thereby, the global maximum of $u_{{\vr}_{n}}$ is $z_{\vr_n}=b_n+{y}_n$ and
$$
\vr_{n}z_{\vr_{n}}=\vr_{n}b_{n}+\vr_{n}{y}_n=\vr_{n}b_{n}+\tilde{y}_{n}.
$$
From boundedness of $(b_n)$,  we get the limit
$
\displaystyle \lim_{n \to \infty}\vr_{n}z_{\vr_{n}}=y_0
$,
therefore
$
 \lim\limits_{ n \rightarrow\infty}Q(\vr_{n}z_{{\vr}_{n}})=Q(y_0)
$.
 We also point out that for any $\vr>0$  the sequence $(\vr z_\vr)$ is bounded, where $z_\vr$ is the maximum point of the solution $u_\vr$  obtained in Lemma \ref{Existence}. In fact, if there exists $\vr_j\to0 $ and $z_{\vr_j}$ of $u_{\vr_j}$ such that $\vr_jz_{\vr_j}\to \infty$. However, from the above arguments,
 $$
 \vr_jz_{\vr_j}=\vr_jb_{\vr_j}+\vr_j{y}_{\vr_j},
 $$
where ${y}_{\vr_j}$ is obtained in \eqref{J0} by non-vanishing argument with $(\vr_j{y}_{\vr_j})$ bounded, and $b_{\vr_j}$ is the maximum point of
of $v_{\vr_j}= u_{\vr_j}(x+{y}_{\vr_j})$. Consequently, $
 \vr_jz_{\vr_j}-\vr_j{y}_{\vr_j}=\vr_jb_{\vr_j}\to \infty.
 $
which contradicts with the fact $b_{\vr_j}$ lies in a ball $B_{R}(0)$.
 From Lemma \ref{Existence}, there is a positive solution for $(SCC1)$ for $\vr>0$ small enough. Therefore, the function ${w_\vr}(x)={u_\vr}(\frac{x}{\vr})$
is a positive solution of \eqref{SCP1}. Thus, the maximum points ${x_\vr}$ and ${z_\vr}$ of ${w_\vr}$ and ${u_\vr}$ respectively,
satisfy the equality ${x_\vr}=\vr{z_\vr}$. Setting $v_\vr(x):=w_\vr(\vr x+x_\vr)$, for any sequence $x_\vr\to x_0$, $\vr\to0$, it follows from Lemma \ref{CP} that,
$
\lim_{\vr\to0}\dist(x_\vr, \mathcal{Q})=0
$
 and $v_\vr$ converges in $E$ to a ground state solution $v$ of
    $$
-\Delta u +u  =\frac{\nu^2_{\max}}{6-\mu}\Big(\int_{\R^3} \frac{ G(u(y))}{|x-y|^\mu}dy\Big)g(u).
$$
From Lemma \ref{0AI}, for some $c,C>0$, $|w_\vr(x)|\leq C\exp\Big(-\frac{c}{\vr}|x-x_\vr|\Big).$

\section{Critical equation with linear potential}
Finally, to study the existence of solutions for the following equation
\begin{equation}
\label{SCC2}
-\Delta u+V(\vr x)u =\left(\int_{\mathbb{R}^3}\frac{|u(y)|^{6-\mu}+F(u(y))}{|x-y|^{\mu}}dy\right)\Big(|u|^{4-\mu}u+ \frac{1}{6-\mu}f(u)\Big)  \quad \mbox{in} \quad \R^3,
\tag{SCC2}
\end{equation}
we introduce the energy functional associated  to \eqref{SCP2} be $J_{\vr}$. The Nehari manifold associated to $J_{\vr}$ will be still denoted by ${\cal{N}_\vr}$, that is,
$$
{\cal{N}_\vr}=\Big\{u\in E:u\neq0,\langle J'_{\vr}(u), u\rangle=0\Big\}.
$$
Similar to Lemma \ref{mountain:1}, $J_{\vr}$ also satisfies the Mountain Pass Geometry and assumption $(f_3)$ implies that the least energy can be characterized by
\begin{equation*}
c_{\vr}=\inf_{u\in\cal{N}_\vr}J_{\vr}(u)=\inf_{u\in E\backslash\{0\}} \max_{t\geq 0},
J_{\vr}(tu),
\end{equation*}
moreover, there exists $\alpha>0$ which is a constant independent of $\vr$ such that $c_{\vr}>\alpha$.

\subsection{Compactness criteria}
Let $(u_n)$ be any $(PS)$ sequence of $J_{\vr}$ at $c$. Then, it is easy to see that $(u_n)$ is bounded and $c\geq0$.
Hence, without loss of generality, we may assume $u_n\rightharpoonup u$ in $E$ and
$u_n\to u$ in
$L^s_{{\rm loc}}(\mathbb{R}^3)$ for $1\leq s< 6$ and $u_n(x)\to u(x)$ a.e. for
$x\in\R^3$. Furthermore, arguing as in the proof of Theorem \ref{AQ}. we have the following lemma
\begin{lem}\label{splitting}
One has along a subsequence:
\begin{itemize}
\item[$(1).$] $J_{\vr}(u_n-u)\to c-J_{\vr}(u)$;
\item[$(2).$] $J'_{\vr}(u_n-u)\to 0$.
\end{itemize}
\end{lem}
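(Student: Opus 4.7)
The plan is to establish both claims by performing Brezis--Lieb type decompositions on each piece of the functional $J_{\vr}$ and its derivative, using the classical Brezis--Lieb lemma for the quadratic part, Lemma \ref{BLN} for the upper critical Choquard term, and compactness of the embedding $E \hookrightarrow L^s_{\rm loc}(\R^3)$ ($2\leq s<6$) for the subcritical and mixed pieces. Throughout I would use that $u_n \rightharpoonup u$ in $E$, $u_n \to u$ a.e. and in $L^s_{\rm loc}(\R^3)$ for $2\leq s<6$, together with the bounds on $V$ that make $\|\cdot\|_{\vr}$ equivalent to the standard $H^1$-norm.

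First I would expand $G(u)=|u|^{6-\mu}+F(u)$ so that the Choquard double integral in $J_\vr$ splits into four pieces: the purely critical term $\iint |u(x)|^{6-\mu}|u(y)|^{6-\mu}/|x-y|^\mu\,dxdy$, two mixed critical/subcritical terms $\iint |u(x)|^{6-\mu}F(u(y))/|x-y|^\mu\,dxdy$, and the purely subcritical term $\iint F(u(x))F(u(y))/|x-y|^\mu\,dxdy$. For the quadratic part, the classical Brezis--Lieb lemma applied to $|\nabla u_n|^2$ and, using that $V$ is bounded, to $V(\vr x)|u_n|^2$ immediately gives
$$
\|u_n\|_{\vr}^{2} \;=\; \|u_n-u\|_{\vr}^{2} + \|u\|_{\vr}^{2} + o_n(1).
$$
For the purely critical Choquard term I would invoke Lemma \ref{BLN} directly.

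The main work is on the mixed and purely subcritical Choquard integrals. Here I would use that, by $(f_1)$, $|F(s)|\leq C(|s|^q+|s|^p)$ with $(6-\mu)/3<q\leq p<6-\mu$, so $F(u_n)$ is bounded in $L^{6/(6-\mu)}(\R^3)$ and, thanks to local $L^s$ strong convergence and Vitali's theorem, one obtains the scalar Brezis--Lieb formula $|F(u_n)-F(u_n-u)-F(u)|_{6/(6-\mu)}\to 0$ (and similarly for $|u_n|^{6-\mu}$). Combined with the Hardy--Littlewood--Sobolev inequality $|h\ast|x|^{-\mu}|_{6/\mu}\leq C|h|_{6/(6-\mu)}$ applied to the appropriate factor, this yields the Brezis--Lieb decomposition for each of the three remaining double integrals; summing the four pieces and combining with the quadratic decomposition gives claim $(1)$.

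For claim $(2)$, I would test $J'_\vr(u_n)-J'_\vr(u_n-u)-J'_\vr(u)$ against an arbitrary $\varphi\in E$ with $\|\varphi\|_\vr\leq 1$ and repeat the same splitting at the level of the derivative. The linear (quadratic-derived) part vanishes by weak convergence uniformly in $\varphi$. For the nonlocal terms, writing $g(u)u = (6-\mu)|u|^{6-\mu} + f(u)u$ and expanding, one gets products of the form $(G(u_n)\ast|x|^{-\mu})g(u_n)\varphi$ that can be handled by the same Hardy--Littlewood--Sobolev estimates combined with the $L^{6/(6-\mu)}$-Brezis--Lieb splittings of $|u_n|^{4-\mu}u_n\varphi$ and $f(u_n)\varphi$ obtained through local compactness. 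Taking the supremum over $\varphi$ gives $\|J'_\vr(u_n-u)-J'_\vr(u_n)+J'_\vr(u)\|_{E^*}\to 0$; since $u$ is a critical point (verified by passing to the limit as in the proof of Theorem \ref{AQ}, thanks to the subcritical convergence of the mixed terms and the nonlocal weak continuity argument based on \eqref{Conv-1}), one has $J'_\vr(u)=0$ and claim $(2)$ follows.

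The hard part will be the mixed critical-subcritical integrals, where one factor is critical and so only Lemma \ref{BLN}-type compactness is available while the other is subcritical with only local strong convergence. The key trick is to tune the Hardy--Littlewood--Sobolev exponents so that the subcritical factor (after convolution with $|x|^{-\mu}$) is tested against a factor lying in a space where local strong convergence plus the $L^{6/(6-\mu)}$ Brezis--Lieb for $|u|^{6-\mu}$ can be exploited simultaneously; once this single computation is in place, both $(1)$ and $(2)$ reduce to assembling the decompositions of the individual pieces.
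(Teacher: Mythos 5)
Your proposal is correct and follows exactly the route the paper indicates: the paper gives no explicit proof of Lemma \ref{splitting}, merely stating it ``arguing as in the proof of Theorem \ref{AQ}'', and that argument is precisely the Brezis--Lieb decomposition you describe — classical Brezis--Lieb for $|\nabla u_n|^2$ and $V(\vr x)|u_n|^2$, Lemma \ref{BLN} for the pure critical Choquard piece, scalar Brezis--Lieb in $L^{6/(6-\mu)}$ combined with the Hardy--Littlewood--Sobolev inequality for the mixed and purely subcritical pieces, and the \eqref{Conv-1}-type weak continuity to pass to the limit in the derivative. Your exponent bookkeeping (noting $(6-\mu)/3<q\leq p<6-\mu$ places $F(u_n)$ in $L^{6/(6-\mu)}$) and your flag that the mixed critical–subcritical double integrals are the delicate point are both accurate.
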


\begin{lem}\label{PSlemma:1}
Suppose that $(f_1)$-$(f_3)$ and $(V)$ hold. Consider
a $(PS)_c$ sequence $(u_n)$ for $J_{\vr}$ with
$$
c<\frac{5-\mu}{2(6-\mu)}S_{H,L}^{\frac{6-\mu}{5-\mu}}.
$$
Suppose that $u_n\rightharpoonup
u$ in $E$. Then either $u_n\to u$ in $E$ along a subsequence or
$$
c-J_{\vr}(u)\geq m_{\kappa_{\infty},1,1},
$$
where $m_{\kappa_{\infty},1,1}$ is the minimax level of $\Phi_{{\kappa_{\infty},1,1}}$ given in (\ref{m}) with $\kappa=\kappa_{\infty},\mu=\tau=1$.
\end{lem}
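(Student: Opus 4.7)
The strategy is to decompose the $(PS)$-sequence via Lemma \ref{splitting}, rule out strong vanishing by means of the assumed critical level bound, and realize a suitable spatial translate as a nontrivial solution of a limit problem at infinity whose ground--state energy dominates $m_{\kappa_\infty,1,1}$.

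Set $v_n:=u_n-u$. Lemma \ref{splitting} gives $v_n\rightharpoonup 0$ in $E$, $J_\vr(v_n)\to d:=c-J_\vr(u)$ and $J'_\vr(v_n)\to 0$. The arguments in the proof of Theorem \ref{AQ} (notably the duality convergence for the critical nonlocal term) show that $u$ is itself a critical point of $J_\vr$, and the Ambrosetti--Rabinowitz-type identity
\[
J_\vr(u)-\tfrac12\langle J'_\vr(u),u\rangle=\frac{1}{2(6-\mu)}\int_{\R^3}\!\int_{\R^3}\frac{[g(u(x))u(x)-G(u(x))]G(u(y))}{|x-y|^\mu}\,dx\,dy\ge 0
\]
then forces $J_\vr(u)\ge 0$; hence $d\le c<\frac{5-\mu}{2(6-\mu)}S_{H,L}^{(6-\mu)/(5-\mu)}$.

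If $v_n\to 0$ strongly in $E$, we are done. Otherwise $\|v_n\|\not\to 0$, and mimicking Lemma \ref{PS1} in the weighted norm $\|\cdot\|_\vr$ (using $V\ge\kappa_{\min}>0$ and the HLS inequality), vanishing in the sense of Lions would force $d\ge\frac{5-\mu}{2(6-\mu)}S_{H,L}^{(6-\mu)/(5-\mu)}$, contradicting the previous bound. Hence there exist $r,\delta>0$ and $(y_n)\subset\R^3$ with $\liminf_n\int_{B_r(y_n)}|v_n|^2\,dx\ge\delta$. Put $w_n(x):=v_n(x+y_n)$; along a subsequence, $w_n\rightharpoonup w\neq 0$ in $E$. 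If $(y_n)$ were bounded, a bounded translate of the weak limit $0$ of $v_n$ would again be $0$, contradicting $w\neq 0$; therefore $|y_n|\to\infty$. Since $V$ is bounded continuous and $\liminf_{|z|\to\infty}V(z)=\kappa_\infty$, a diagonal extraction produces $V(\vr(x+y_n))\to V_\infty(x)$ a.e., with $V_\infty(x)\ge\kappa_\infty$. Passing to the limit in $\langle J'_\vr(v_n),\phi(\cdot-y_n)\rangle\to 0$ for every $\phi\in C_0^\infty(\R^3)$ --- the critical and subcritical nonlocal terms being handled exactly as in the proof of Theorem \ref{AQ} via HLS, dual convergence and local compactness --- shows that $w$ is a nontrivial critical point of the limit functional $\Phi_{V_\infty,1,1}$.

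For the energy comparison, Lemma \ref{BLN} applied to the critical nonlocal piece, the standard Brezis--Lieb splitting applied to the subcritical pieces, and Fatou applied to $\int V(\vr(x+y_n))|w_n|^2\,dx$ together yield $d=\lim_n J_\vr(v_n)\ge \Phi_{V_\infty,1,1}(w)$. Since $w$ is a critical point of $\Phi_{V_\infty,1,1}$ and $V_\infty\ge\kappa_\infty$ pointwise,
\[
\Phi_{V_\infty,1,1}(w)=\max_{t\ge 0}\Phi_{V_\infty,1,1}(tw)\ge\max_{t\ge 0}\Phi_{\kappa_\infty,1,1}(tw)\ge m_{\kappa_\infty,1,1}
\]
by the mountain-pass characterization \eqref{m}. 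Combining, $c-J_\vr(u)\ge m_{\kappa_\infty,1,1}$, as required. The delicate step is the identification of $w$ as a solution of a well-posed limit equation: one has only $\liminf V=\kappa_\infty$, so $V(\vr(\cdot+y_n))$ need not converge uniformly, which forces the use of the pointwise (diagonal) limit $V_\infty\ge\kappa_\infty$ and the comparison $\Phi_{V_\infty,1,1}\ge\Phi_{\kappa_\infty,1,1}$ rather than a direct identification with the problem at infinity. The critical Hardy--Littlewood--Sobolev nonlinearity, which is not weakly continuous, is tamed by Lemma \ref{BLN}.
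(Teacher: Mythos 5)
Your strategy is genuinely different from the paper's. The authors do not extract a nontrivial translated weak limit $w$ and identify a limit equation at infinity; instead they project $v_n:=u_n-u$ onto the Nehari manifold $\cal N_{\kappa_\infty,1,1}$ via a parameter $t_n>0$ with $t_nv_n\in\cal N_{\kappa_\infty,1,1}$, and then split the analysis into the three cases $\limsup_n t_n>1$, $=1$, $<1$. The first case is excluded by combining both Nehari identities with a Lions-type non-vanishing argument and the strict monotonicity coming from $(f_3)$; the other two cases give $c-J_\vr(u)\geq m_{\kappa_\infty,1,1}$ by direct energy comparison. That avoids any reference to a limiting potential $V_\infty$ or to a limiting PDE solved by a translated profile, and is more economical. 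Your route, in contrast, is a profile-decomposition argument: you translate by $y_n$, show $|y_n|\to\infty$, and realize $w$ as a critical point of a limit functional $\Phi_{V_\infty,1,1}$ with $V_\infty\geq\kappa_\infty$, from which the energy bound follows via the fibering-map chain $\Phi_{V_\infty,1,1}(w)=\max_t\Phi_{V_\infty,1,1}(tw)\geq\max_t\Phi_{\kappa_\infty,1,1}(tw)\geq m_{\kappa_\infty,1,1}$. Both strategies are standard in concentration--compactness arguments, and your chain of inequalities is correct once $w$ is known to be a critical point of a well-posed limit problem with potential bounded below by $\kappa_\infty$.

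There is, however, a genuine gap in the step where you assert that ``a diagonal extraction produces $V(\vr(x+y_n))\to V_\infty(x)$ a.e.'' This does not follow from the hypotheses. Condition $(V)$ gives only that $V$ is bounded and continuous with $\liminf_{|z|\to\infty}V(z)=\kappa_\infty$; there is no uniform continuity (so no equicontinuity of $V_n(x):=V(\vr(x+y_n))$, hence no Arzel\`a--Ascoli), no oscillation control, and no convergence of $V$ at infinity. A bounded sequence of continuous functions need not have an a.e.\ convergent subsequence (take $V(z)=\sin|z|$ and $y_n=n\,e_1$, for example), and a diagonal argument produces convergence only on a countable set of points, which cannot be upgraded to a.e.\ convergence without additional regularity. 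The correct way to repair this is to take $V_\infty$ to be a weak-$*$ limit in $L^\infty(\R^3)$ of the bounded family $V_n$: then $V_\infty\geq\kappa_\infty$ a.e.\ (since for any $\xi>0$ and $R>0$, $V_n\geq\kappa_\infty-\xi$ on $B_R$ for $n$ large), and one passes to the limit in the linear term $\int V_n w_n\varphi\,dx$ using $w_n\to w$ in $L^2_{\rm loc}$ together with $V_n\rightharpoonup^* V_\infty$ and $w\varphi\in L^1$ with compact support. The rest of your argument, including $d=c-J_\vr(u)\geq\Phi_{V_\infty,1,1}(w)$ (which is cleanest to obtain from $J_\vr(v_n)-\tfrac12\langle J'_\vr(v_n),v_n\rangle\to d$, translation invariance of the nonlocal term, and Fatou applied to the nonnegative double integrand), then goes through. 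So the proposal is salvageable, but as written the pointwise-convergence claim is false and needs to be replaced by the weak-$*$ argument just described.
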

\begin{proof}
Define $v_n=u_n-u$, from Lemma \ref{splitting} we know that $(v_n)$ is a $(PS)$ sequence at  $c-J_{\vr}(u)$ with $J_{\vr}(u)\geq0$. Now we suppose that $v_n\nrightarrow 0$ in $E$. From condition $(f_3)$, for each $u_n$ there is unique $t_n\in (0,\infty)$ such that $(t_nv_n)\subset {\cal N}_{{\kappa_\infty,1,1}}$. We divide the proof into three steps.

\vspace{0.2 cm}

\noindent
$\bullet$ {\bf Step 1.} The sequence $(t_n)$ satisfies
$$
\limsup_{n\to\infty}t_n\leq1.
$$
In fact, suppose by contradiction that the above claim does not hold. Then, there exist $\delta>0$ and a subsequence of $(t_n)$, still denoted by itself, such that
$$
t_n\geq 1+\delta\ \hbox{ for all}\ \ n\in \N.
$$
Since $\langle J'_{\vr}(v_n), v_n\rangle=o_n(1)$ and $(t_nv_n)\subset {\cal N}_{{\kappa_\infty,1,1}}$, we have
$$
 \|v_n\|^2_{\varepsilon}=\int_{\R^3}\int_{\R^3}\frac{G(v_n(y))g(v_n(x))v_n(x)}{|x-y|^{\mu}}dxdy+o_n(1)
$$
and
$$
t^2_n\int_{\mathbb{R}^3} (|\nabla v_n|^{2}+ \kappa_\infty |v_n|^{2})dx=\int_{\R^3}\int_{\R^3}\frac{G(t_nv_n(y))g(t_nv_n(x))t_nv_n(x)}{|x-y|^{\mu}}dxdy.
$$
Consequently,
$$
\int_{\mathbb{R}^3} (\kappa_\infty-V(\vr x))|v_n|^{2}dx+o_n(1)=\int_{\R^3}\int_{\R^3}\Big(\frac{G(t_nv_n(y))g(t_nv_n(x))t_nv_n(x)}{t^2_n|x-y|^{\mu}}-\frac{G(v_n(y))g(v_n(x))v_n(x)}{|x-y|^{\mu}}\Big)dxdy.
$$
Given $\xi>0$, from assumption $(V)$, there exists $R=R(\xi)>0$ such that
\begin{equation*}
V(\varepsilon x)\geq \kappa_\infty-\xi,\  \hbox{for any}\ |x|\geq R.
\end{equation*}
Using the fact that $v_n\to 0$ in $L^p(B_R(0))$, we conclude that
$$
\int_{\R^3}\int_{\R^3}\Big(\frac{G(t_nv_n(y))g(t_nv_n(x))t_nv_n(x)}{t^2_n|x-y|^{\mu}}-\frac{G(v_n(y))g(v_n(x))v_n(x)}{|x-y|^{\mu}}\Big)dxdy\leq\xi C+o_n(1),
$$
where $C=\sup_{n \in \mathbb{N}}|v_n|^2_2$. Notice that $(v_n)$  is $(PS)$ sequence at $c-J_{\vr}(u)$. We claim that there exist $(y_{n}) \subset \R^{3}$ and $r,\delta >0$ such that
$$
\int_{B_{r}(y_{n})}|v_{n}|^{2} dx\geq \delta, \quad n \in \mathbb{N}.
$$
Otherwise,
 \begin{center}
 $v_{n}\to 0$ in
 $L^s(\R^3)$, \ \ $2<s< 6$,\ \ $\hbox{as}\ \  n\to\infty$.
 \end{center}
 By repeating the arguments in Lemma \ref{Existence}, we have
\begin{equation}\label{DD12}
\begin{array}{ll}
\displaystyle\frac12 \|v_n\|^2_{\varepsilon}-\frac{1}{2(6-\mu)}\int_{\mathbb{R}^3}\int_{\mathbb{R}^3}   \frac{|v_n(x)|^{6-\mu}|v_n(y)|^{6-\mu}}{|x-y|^\mu}dxdy\to c-J_{\vr}(u)
\end{array}
\end{equation}
and
\begin{equation}\label{DD13}
 \|v_n\|^2_{\varepsilon}=\int_{\mathbb{R}^3}\int_{\mathbb{R}^3}   \frac{|v_n(x)|^{6-\mu}|v_n(y)|^{6-\mu}}{|x-y|^\mu}dxdy+o_{n}(1).
\end{equation}
Hence,
\begin{equation}\label{DD14}
 \|v_n\|^2_{\varepsilon}\leq \int_{\mathbb{R}^3}
\int_{\mathbb{R}^3}\frac{|v_{n}(x)|^{6-\mu}|v_{n}(y)|^{6-\mu}}
{|x-y|^{\mu}}dxdy+o_{n}(1)
\leq S_{H,L}^{\mu-6}\|v_{n}\|^{2(6-\mu)}+o_{n}(1).
\end{equation}
Since $ \|v_n\|_{\varepsilon}\nrightarrow0$, by \eqref{DD14} we get $ \|v_n\|_{\varepsilon}\geq  S_{H,L}^{\frac{6-\mu}{2(5-\mu)}}$. Then from \eqref{DD12}, \eqref{DD13} and \eqref{DD14} we easily conclude that
$$
c-J_{\vr}(u)\geq \frac{5-\mu}{2(6-\mu)}S_{H,L}^{\frac{6-\mu}{5-\mu}},
$$
which contradicts with our assumption that
$$
c<\frac{5-\mu}{2(6-\mu)}S_{H,L}^{\frac{6-\mu}{5-\mu}}.
$$
Thus there exists $(y_n)\subset \R^3$ and $r,\beta>0$ such that
$$
\int_{B_{r}(y_n)}|v_n|^{2}dx\geq \beta.
$$
If we define $\tilde{v}_n=v_n(x+y_n)$, we may suppose that, up to a subsequence, $\tilde{v}_n\rightharpoonup\tilde{v}$ in $E$. Moreover, using the fact that $v_n \geq 0$ for all $n \in \mathbb{N}$, there exists a subset $\Omega\subset \R^3$ with positive measure such that $\tilde{v}(x) >0 $ for all $x \in \Omega$. Consequently, from $(f_3)$, we get
$$
\aligned
&\int_{\Omega}\int_{\Omega}\frac{|\tilde{v}_n(y)||\tilde{v}_n(x)|}{|x-y|^{\mu}}\Big[\frac{G((1+\delta)\tilde{v}_n(y))g((1+\delta)\tilde{v}_n(x))(1+\delta)\tilde{v}_n(x)}{(1+\delta)|\tilde{v}_n(y)|(1+\delta)|\tilde{v}_n(x)|}-\frac{G(\tilde{v}_n(y))g(\tilde{v}_n(x))\tilde{v}_n(x)}{|\tilde{v}_n(y)||\tilde{v}_n(x)|}\Big]dxdy\\
&\hspace{4mm}=\int_{\Omega}\int_{\Omega}\Big[\frac{G((1+\delta)\tilde{v}_n(y))g((1+\delta)\tilde{v}_n(x))(1+\delta)\tilde{v}_n(x)}{(1+\delta)^2|x-y|^{\mu}}-\frac{G(\tilde{v}_n(y))g(\tilde{v}_n(x))\tilde{v}_n(x)}{|x-y|^{\mu}}\Big]dxdy\\
&\hspace{4mm}\leq\xi C+o_n(1)
\endaligned
$$
Letting $n\to \infty$ and applying Fatou's lemma, from the monotone assumption $(f_3)$, it follows
that
$$
0<\int_{\Omega}\int_{\Omega}\left(
\frac{G((1+\delta)\tilde{v}(y))g((1+\delta)\tilde{v}(x))(1+\delta)\tilde{v}(x)}{(1+\delta)^2|x-y|^{\mu}}-\frac{G(\tilde{v}(y))g(\tilde{v}(x))\tilde{v}(x)}{|x-y|^{\mu}}\right)dxdy\leq\xi C
$$
which is absurd, since the arbitrariness of $\xi $.

\vspace{0.5 cm}

\par
\noindent
$\bullet$ {\bf Step 2.} The sequence $(t_n)$ satisfies $$\displaystyle\limsup_{n\to\infty}t_n=1.$$
In this case, there exists a subsequence, still denoted by $(t_n)$, such that
$t_n\to1$. Since $m_{\kappa_{\infty},1,1}\leq\Phi_{\kappa_{\infty},1,1}(t_nv_n)$, we know
$$
c-J_{\vr}(u)+o_n(1)=J_{\vr}(v_n)\geq J_{\vr}(v_n)+m_{\kappa_{\infty},1,1}-\Phi_{\kappa_{\infty},1,1}(t_nv_n).
$$
Given $\xi>0$, from assumption $(V)$ there exists $R=R(\xi)>0$ such that
$$
V(\varepsilon x)\geq \kappa_{\infty}-\xi,\  \hbox{for any}\ |x|\geq R.
$$
Since
\begin{align*}
J_{\vr}(v_n)-\Phi_{\kappa_{\infty},1,1}(t_nv_n)
&=\frac{(1-t^2_n)}{2}\int_{\mathbb{R}^3} |\nabla v_n|^2dx+ \frac{1}{2}\int_{\mathbb{R}^3}V(\varepsilon x) |v_n|^2dx-\frac{t^2_n}{2}\int_{\mathbb{R}^N}\kappa_{\infty}|v_n|^2dx\\
& +\frac{1}{2(6-\mu)}\int_{\R^3}\int_{\R^3}\Big(\frac{G(t_nv_n(y))G(t_nv_n(x))}{t^2_n|x-y|^{\mu}}-\frac{G(v_n(y))G(v_n(x))}{|x-y|^{\mu}}\Big)dxdy,
\end{align*}
from the fact that $(v_n)$ is bounded in $E$ and $v_n\rightharpoonup0$,  we derive
$$
c-J_{\vr}(u)+o_n(1)=J_{\vr}(v_n)\geq m_{\kappa_{\infty},1,1}-\xi C+o_n(1),
$$
consequently,
$
c-J_{\vr}(u)\geq m_{\kappa_{\infty},1,1}.
$
\vspace{0.2 cm}

\noindent
$\bullet$ {\bf Step 3.} The sequence $(t_n)$ satisfies
$$
\limsup_{n\to\infty}t_n=t_0<1.
$$
\par
\noindent
We suppose that there exists a subsequence, still denoted by $(t_n)$, such that
$t_n\to t_0<1$.  Since $m_{\kappa_{\infty},1,1}\leq \Phi_{\kappa_{\infty},1,1}(t_nv_n)$ and $\langle\Phi'_{\kappa_{\infty},1,1}(t_nv_n),t_nv_n\rangle=0$, we get
$$
\aligned
m_{\kappa_{\infty},1,1}&\leq \Phi_{\kappa_{\infty},1,1}(t_nv_n)-\frac12\langle\Phi'_{\kappa_{\infty},1,1}(t_nv_n),t_nv_n\rangle\\
&=\frac{1}{2(6-\mu)}\int_{\R^3}\int_{\R^3}\frac{G(t_nv_n(y)g(t_nv_n(x))t_nv_n(x)-G(t_nv_n(y))G(t_nv_n(x))}{|x-y|^{\mu}}dxdy\\
&\leq \frac{1}{2(6-\mu)}\int_{\R^3}\int_{\R^3}\frac{G(v_n(y)g(v_n(x))v_n(x)-G(v_n(y))G(v_n(x))}{|x-y|^{\mu}}dxdy\\
&= J_{\vr}(v_n)-\frac12\langle J'_{\vr}(v_n), v_n\rangle
=c-J_{\vr}(u)+o_n(1).
\endaligned
$$
From this, the conclusion then follows.
\end{proof}

By an immediate consequence of Lemma \ref{splitting}, we have
\begin{lem}\label{PS}
Suppose $(f_1)$-$(f_3)$ and $(V)$ hold. Then $J_{\vr}$  satisfies $(PS)_c$ condition for all $c<m_{\kappa_{\infty},1,1}$.
\end{lem}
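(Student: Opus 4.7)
The plan is to chain Lemma \ref{splitting} with Lemma \ref{PSlemma:1} and exploit the nonnegativity of $J_\vr$ at its critical points. Start from a $(PS)_c$ sequence $(u_n)\subset E$ with $c<m_{\kappa_\infty,1,1}$. Boundedness of $(u_n)$ in $E$ follows from the standard Ambrosetti--Rabinowitz trick of combining $J_\vr(u_n)-\tfrac{1}{2}\langle J_\vr'(u_n),u_n\rangle$ with $(f_2)$, so up to a subsequence $u_n\rightharpoonup u$ in $E$. The weak-to-weak continuity of the nonlocal nonlinearity established in the proof of Theorem \ref{AQ} shows $J_\vr'(u)=0$; in particular $u$ is a critical point of $J_\vr$.

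Next I would verify that the level $c$ is small enough for Lemma \ref{PSlemma:1} to apply. The same test-function computation carried out in Lemma \ref{EMP1} (the extra linear term $\kappa_\infty\int|u_\vr|^2=\O(\vr)$ only enters the lower-order remainder) yields
$$
m_{\kappa_\infty,1,1}<\frac{5-\mu}{2(6-\mu)}S_{H,L}^{\frac{6-\mu}{5-\mu}},
$$
so the hypothesis $c<m_{\kappa_\infty,1,1}$ places $c$ below the threshold required by Lemma \ref{PSlemma:1}. That lemma then gives the dichotomy: either $u_n\to u$ strongly in $E$, or $c-J_\vr(u)\geq m_{\kappa_\infty,1,1}$.

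To exclude the second alternative, use that $u$ is a critical point of $J_\vr$. Then $\langle J_\vr'(u),u\rangle=0$, and the Ambrosetti--Rabinowitz condition $(f_2)$ gives
$$
J_\vr(u)=J_\vr(u)-\tfrac{1}{2}\langle J_\vr'(u),u\rangle\geq 0.
$$
Therefore $c-J_\vr(u)\leq c<m_{\kappa_\infty,1,1}$, contradicting the second alternative. Hence $u_n\to u$ strongly in $E$, i.e.\ $J_\vr$ satisfies $(PS)_c$.

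The main conceptual obstacle — already absorbed into Lemma \ref{PSlemma:1} — is the three-step Nehari-rescaling analysis of the parameter $t_n$ for which $t_nv_n\in{\cal N}_{\kappa_\infty,1,1}$: it forces any non-compact residual $v_n=u_n-u$ to split off an autonomous bubble of energy at least $m_{\kappa_\infty,1,1}$. Without that quantitative lower bound, the drift of $V(\vr x)$ toward $\kappa_\infty$ at infinity together with the upper-critical nonlocal term could spoil compactness by producing escaping masses whose energy is unaccounted for.
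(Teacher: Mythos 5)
Your proof is correct and spells out exactly the argument the paper leaves implicit (the paper's attribution ``immediate consequence of Lemma \ref{splitting}'' should really read Lemma \ref{PSlemma:1}, which you correctly identify as the engine). You fill in all the necessary pieces: boundedness of the $(PS)_c$ sequence via the Ambrosetti--Rabinowitz trick, $J_\vr'(u)=0$ from the weak-to-weak convergence established in the proof of Theorem \ref{AQ}, the threshold comparison $m_{\kappa_\infty,1,1}<\frac{5-\mu}{2(6-\mu)}S_{H,L}^{(6-\mu)/(5-\mu)}$ coming from the Lemma \ref{EMP1} test-function estimate with the $L^2$ term contributing only $\O(\vr)$, and finally $J_\vr(u)\ge 0$ to rule out the second branch of the dichotomy in Lemma \ref{PSlemma:1}. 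This is the same route as the paper; no discrepancies.
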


\begin{cor}\label{PSlemma:N}
Suppose $(f_1)$-$(f_3)$ and $(V)$. Then $J_{\vr}|_{\cal{N}_\vr}$  satisfies $(PS)_c$ condition for all $c<m_{\kappa_{\infty},1,1}$.
\end{cor}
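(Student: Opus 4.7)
The plan is to reduce a constrained Palais--Smale sequence on the Nehari manifold ${\cal N}_\vr$ to an unconstrained one for $J_\vr$ on all of $E$, so that Lemma \ref{PS} applies directly. Concretely, I would take a sequence $(u_n)\subset {\cal N}_\vr$ with $J_\vr(u_n)\to c<m_{\kappa_\infty,1,1}$ and $\|(J_\vr|_{{\cal N}_\vr})'(u_n)\|_{T_{u_n}{\cal N}_\vr}\to0$. Writing $\Psi_\vr(u):=\langle J_\vr'(u),u\rangle$ so that ${\cal N}_\vr=\Psi_\vr^{-1}(0)\setminus\{0\}$, the Lagrange multiplier rule on ${\cal N}_\vr$ produces a sequence $\lambda_n\in\R$ with
\[
J_\vr'(u_n)=\lambda_n\Psi_\vr'(u_n)+o(1)\quad\text{in }E^*.
\]

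The main step is to show $\lambda_n\to 0$. Testing the above identity with $u_n$ and using $u_n\in{\cal N}_\vr$, one gets $\lambda_n\langle\Psi_\vr'(u_n),u_n\rangle=o(\|u_n\|)$. The key fact, already established in Section~2 for $\Phi_{\kappa,\nu,\tau}$ and reproved identically for $J_\vr$ using $(f_2)$, $(f_4')$ and the uniform lower bound $\|u\|_\vr\geq\alpha$ on ${\cal N}_\vr$, is the strict transversality estimate
\[
\langle\Psi_\vr'(u),u\rangle\leq -\varrho<0\quad\forall u\in{\cal N}_\vr,
\]
with $\varrho>0$ independent of $\vr$. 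Combined with boundedness of $(u_n)$ in $E$ (which follows from the Ambrosetti--Rabinowitz condition together with $J_\vr(u_n)\to c$ and $\langle J_\vr'(u_n),u_n\rangle=0$), this forces $\lambda_n\to 0$. Since $\Psi_\vr'(u_n)$ is bounded in $E^*$ (as the derivative of a $C^1$ functional evaluated on a bounded sequence), we conclude $J_\vr'(u_n)\to 0$ in $E^*$, i.e. $(u_n)$ is an ordinary $(PS)_c$ sequence for $J_\vr$.

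At this point Lemma \ref{PS} applies, since $c<m_{\kappa_\infty,1,1}$, and yields a strongly convergent subsequence in $E$. The limit $u$ lies in ${\cal N}_\vr$ by continuity of $\Psi_\vr$ together with $\|u_n\|_\vr\geq\alpha$ which prevents the limit from being trivial at the critical level (alternatively, one just observes $u_n\to u$ in $E$ already gives the conclusion for the restricted functional without needing the limit to belong to ${\cal N}_\vr$).

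The main obstacle, as usual in this Nehari-type argument, is establishing the uniform bound $\langle\Psi_\vr'(u_n),u_n\rangle\leq -\varrho$ along the sequence; but this is exactly the computation carried out in Section~2 using $(f_4')$, and the same argument works verbatim for $J_\vr$ because the potential $V(\vr x)$ plays no role in the nonlocal nonlinearity. Everything else is standard Lagrange multiplier bookkeeping.
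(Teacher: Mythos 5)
Your proof is correct and follows essentially the same route as the paper. The paper compresses the Lagrange multiplier argument into the single assertion that ${\cal N}_\vr$ is a natural constraint (relying on the transversality estimate $\langle\Psi_\vr'(u),u\rangle\leq-\varrho$ established in Section~2, which carries over verbatim to $J_\vr$) and then invokes Lemma~\ref{PS}; you have simply unfolded that bookkeeping explicitly.
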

\begin{proof}
Let $(u_n)\subset  {\cal{N}_\vr}$ be any sequence such that $J_{\vr}(u_n)\to
c$ and $\|J'_{\vr}(u_n)\|_*\to 0$.
Since $\cal{N}_\vr$ is a natural constraint, we know that $(u_n)$ is a $(PS)_c$ sequence with $c<m_{\kappa_{\infty},1,1}$. The conclusion follows from Lemma \ref{PS}.
\end{proof}

\section{Existence and multiplicity}
In this section, we are going to prove the existence and multiplicity of solutions. First we have the following existence result.

\begin{thm}[Existence of ground states]
	\label{GS}
Suppose that the nonlinearity $f$ satisfies $(f_1)$-$(f_3)$ and the potential function $V(x)$ satisfies condition $(V)$. Then, there exists $\vr_0>0$ such that problem $(SCC2)$ has a positive ground state solution $u_\vr$, for all $\vr<\vr_0$.
\end{thm}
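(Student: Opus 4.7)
The strategy is to construct a bounded Palais--Smale sequence for $J_\vr$ at the level $c_\vr$ and then invoke the compactness criteria of Lemmas~\ref{PSlemma:1} and~\ref{PS}, once we verify that $c_\vr$ lies strictly below both the ``global'' threshold $m_{\kappa_\infty,1,1}$ and the critical Hardy--Littlewood--Sobolev threshold $\tfrac{5-\mu}{2(6-\mu)} S_{H,L}^{(6-\mu)/(5-\mu)}$.

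The crucial estimate is $\limsup_{\vr \to 0} c_\vr \leq m_{\kappa_{\min},1,1}$. To prove it, fix $x_0 \in \mathcal{V}$ and let $w$ be a positive ground state of $\Phi_{\kappa_{\min},1,1}$ (provided by Theorem~\ref{AQ}) with the exponential decay of Lemma~\ref{EDP}. Introduce the translated test function $w_\vr(x) := w(x - x_0/\vr)$ and let $t_\vr > 0$ be the unique scalar, given by $(f_3)$, such that $t_\vr w_\vr \in \mathcal{N}_\vr$. After the change of variable $y = x - x_0/\vr$, the nonlocal term is translation invariant and therefore unchanged, while
\[
\int_{\R^3}(|\nabla w_\vr|^2 + V(\vr x)|w_\vr|^2)\,dx = \int_{\R^3}(|\nabla w|^2 + V(\vr y + x_0)|w|^2)\,dy \longrightarrow \int_{\R^3}(|\nabla w|^2 + \kappa_{\min}|w|^2)\,dy
\]
by dominated convergence, since $V$ is continuous at $x_0$ and $w$ decays exponentially. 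The Nehari identity $\langle J'_\vr(t_\vr w_\vr), t_\vr w_\vr\rangle = 0$ combined with the uniqueness/monotonicity part of $(f_3)$ then forces $t_\vr \to 1$, and hence $J_\vr(t_\vr w_\vr) \to \Phi_{\kappa_{\min},1,1}(w) = m_{\kappa_{\min},1,1}$.

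Combining this bound with Lemma~\ref{CO} (giving the strict inequality $m_{\kappa_{\min},1,1} < m_{\kappa_\infty,1,1}$, since $\kappa_{\min} < \kappa_\infty$) and with a direct adaptation of Lemma~\ref{EMP1} applied to $\Phi_{\kappa_{\min},1,1}$ yields
\[
c_\vr < \min\Big\{m_{\kappa_\infty,1,1},\ \tfrac{5-\mu}{2(6-\mu)} S_{H,L}^{(6-\mu)/(5-\mu)}\Big\}
\]
for $\vr$ sufficiently small. Ekeland's variational principle on the natural constraint $\mathcal{N}_\vr$ then produces a $(PS)_{c_\vr}$ sequence for $J_\vr$; Lemma~\ref{PSlemma:1} simultaneously rules out vanishing (via the critical threshold) and the escape of mass to infinity (via the splitting $c_\vr - J_\vr(u_\vr) < m_{\kappa_\infty,1,1}$, using $J_\vr(u_\vr) \geq 0$ from Fatou as in the proof of Theorem~\ref{AQ}). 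Consequently the sequence converges strongly in $E$ to a nontrivial critical point $u_\vr$ with $J_\vr(u_\vr) = c_\vr$, which is the desired ground state. Positivity follows by replacing $u_\vr$ by $|u_\vr|$ and invoking the strong maximum principle after Moser iteration, exactly as in Section~3.

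The main obstacle is the first step: controlling $t_\vr \to 1$ and the convergence of $J_\vr(t_\vr w_\vr)$ to $m_{\kappa_{\min},1,1}$ despite the support of $w_\vr$ drifting to infinity as $\vr \to 0$. Translation invariance of the upper-critical nonlocal term makes that portion trivial, but the quadratic term $\int V(\vr x)|w_\vr|^2\,dx$ requires turning the merely pointwise convergence $V(\vr \cdot + x_0) \to \kappa_{\min}$ on compact sets into global $L^2$-convergence, which is where the exponential decay of $w$ from Lemma~\ref{EDP} is indispensable.
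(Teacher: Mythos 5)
Your proposal is correct and follows essentially the same strategy as the paper: obtain a $(PS)_{c_\vr}$ sequence, show $\limsup_{\vr\to 0} c_\vr \leq m_{\kappa_{\min},1,1} < m_{\kappa_\infty,1,1}$, and invoke the compactness criterion of Lemma~\ref{PS}. The only stylistic difference is that the paper tests with a compactly supported near-minimizer $\Psi_\gamma$ (so the energy comparison is elementary), while you test with the actual ground state $w$ translated to $x_0/\vr$ and appeal to its exponential decay (Lemma~\ref{EDP}) and dominated convergence; you also make explicit the check that $m_{\kappa_{\min},1,1}$ lies below the Hardy--Littlewood--Sobolev threshold via an adaptation of Lemma~\ref{EMP1}, which the paper leaves implicit in its use of Lemma~\ref{PS}.
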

\begin{proof}
It is easy to check that $J_{\vr}$ also satisfies the Mountain Pass geometry. Let
$$
m_\vr:=\inf_{u\in E\backslash\{0\}} \max_{t\geq 0}
J_{\vr}(tu)= \inf_{u\in \cal{N}_\vr}J_{\vr}(u).
$$
Then,  we know there exists a $(PS)$ sequence at $m_\vr$, i.e.,
$$
J'_{\vr}(u_n)\rightarrow0 \quad \mbox{and} \quad J_{\vr}(u_n)\rightarrow
{m_\vr}.
$$
Thus, by Lemma \ref{PS}, if $m_\vr<m_{\kappa_{\infty},1,1}$, then the existence of ground state solution is guaranteed. In what follows, we fix $\gamma>0$ and $\Psi_\gamma \in C^{\infty}_{0}(\R^{3})$ such that
$$
\Phi_{\kappa_{\min},1,1}(\Psi_\gamma)=\max_{t \geq 0}\Phi_{\kappa_{\min},1,1}(t \Psi_\gamma) \quad \mbox{and} \quad \Phi_{\kappa_{\min},1,1}(\Psi_\gamma) \leq m_{\kappa_{\min},1,1} - \gamma.
$$
By a direct computation,
$$
\limsup_{\epsilon \to 0} m_{\vr} \leq \Phi_{\kappa_{\min},1,1}(\Psi_\gamma) \leq m_{\kappa_{\min},1,1} - \gamma <m_{\kappa_{\infty},1,1}.
$$
Therefore, there is $\vr_0 >0$ such that $m_\vr <m_{\kappa_{\infty},1,1}$ for all $\vr \in  (0, \vr_0).$
\end{proof}

\noindent
Next, we are going to show the existence of multiple solutions
and study the behavior of their maximum points in relation to the set $\mathcal{V}$.
Let $\delta>0$ be fixed and $w$ be a ground state
solution of problem \eqref{A} with $A=\kappa_{\min}$. Define $\eta$ to be a smooth non-increasing cut-off function in $[0,\infty)$ such that $\eta(s)=1$ if $0\leq s\leq\frac{\delta}{2}$ and $\eta(s)=0$ if $s\geq\delta$.
 For any $y\in \mathcal{V}$, let us define
 $$
 \Psi_{\vr,y}(x)=\eta(|\vr x-y|)w\Big(\frac{\vr x-y}{\vr}\Big),
 $$
$t_\vr>0$ satisfying
$$
\max_{t\geq0}I_{\vr}(t\Psi_{\vr,y})=I_{\vr}(t_\vr\Psi_{\vr,y}),
$$
and $\Pi_\vr:\mathcal{V}\to \cal{N}_\vr$ by $\Pi_\vr(y)=t_\vr\Psi_{\vr,y}$. By construction, $\Pi_\vr(y)$ has compact support for any $y\in M$.

\begin{lem}\label{mapping2}
 The function $\Pi_{\vr}$ has the following limit
$$
\lim_{\vr \rightarrow 0}J_{\vr}(\Pi_{\vr}(y))=m_{\kappa_{\min},1,1}, \ \hbox{uniformly in}\ y\in \mathcal{V}.
$$
\end{lem}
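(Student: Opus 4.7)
The plan is to use the rescaling that recenters the bump at the origin, together with the continuity of $V$ at each point of $\mathcal{V}$ and the compactness of $\mathcal{V}$, in order to reduce the computation to the limit functional $\Phi_{\kappa_{\min},1,1}$ evaluated at the ground state $w$.

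First I would change variables by setting $z=x-y/\vr$, so that $\Psi_{\vr,y}(x)=\eta(\vr|z|)\,w(z)$ and $dx=dz$. Since $w$ has exponential decay (Lemma \ref{EDP}) and $\eta(\vr|\cdot|)\to 1$ pointwise as $\vr\to 0$, the dominated convergence theorem yields $\eta(\vr|\cdot|)w\to w$ in $H^1(\R^3)$ and in every $L^s(\R^3)$ with $s\in[2,6]$. Using $V(\vr x)=V(\vr z+y)$, the boundedness of $V$, and the fact that $V$ is continuous at $y\in\mathcal{V}$ with $V(y)=\kappa_{\min}$, dominated convergence also gives
$$
\|\Psi_{\vr,y}\|_\vr^{\,2}\longrightarrow \int_{\R^3}\bigl(|\nabla w|^2+\kappa_{\min}\,w^2\bigr)\,dz,
$$
uniformly for $y$ in the compact set $\mathcal{V}$ (uniformity is checked by extracting from any sequence $y_n\in\mathcal{V}$ a convergent subsequence $y_n\to y_0\in\mathcal{V}$ and applying dominated convergence once more). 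The nonlocal term of $J_\vr$ carries no potential, so the Hardy--Littlewood--Sobolev inequality together with the $H^1$-convergence above gives
$$
\int_{\R^3}\!\!\int_{\R^3}\frac{G(t\Psi_{\vr,y}(x))G(t\Psi_{\vr,y}(\xi))}{|x-\xi|^\mu}\,dx\,d\xi \longrightarrow \int_{\R^3}\!\!\int_{\R^3}\frac{G(tw(x))G(tw(\xi))}{|x-\xi|^\mu}\,dx\,d\xi,
$$
uniformly for $t$ in bounded sets and $y\in\mathcal{V}$. Consequently $J_\vr(t\Psi_{\vr,y})=\Phi_{\kappa_{\min},1,1}(tw)+o_\vr(1)$ uniformly on compact subsets of $[0,\infty)\times\mathcal{V}$.

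Next I would show $t_\vr\to 1$ uniformly in $y\in\mathcal{V}$. Since $t_\vr\Psi_{\vr,y}\in\mathcal{N}_\vr$ and $\|u\|_\vr\geq\alpha>0$ on $\mathcal{N}_\vr$ with $\alpha$ independent of $\vr$, combined with the uniform boundedness of $\|\Psi_{\vr,y}\|_\vr$, one obtains a uniform lower bound $t_\vr\geq\underline t>0$. For the upper bound, the Ambrosetti--Rabinowitz condition (Remark \ref{R1}) yields $G(ts)\geq C t^\theta s^\theta$ for $t\geq 1$, $s>0$, with $\theta>2$; inserting this into the Nehari identity
$$
t_\vr^{\,2}\|\Psi_{\vr,y}\|_\vr^{\,2}=\frac{1}{6-\mu}\int_{\R^3}\!\!\int_{\R^3}\frac{G(t_\vr\Psi_{\vr,y}(\xi))g(t_\vr\Psi_{\vr,y}(x))t_\vr\Psi_{\vr,y}(x)}{|x-\xi|^\mu}\,dx\,d\xi
$$
forces $t_\vr\leq\overline t<\infty$ uniformly. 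Extracting a subsequence $t_\vr\to t_*\in[\underline t,\overline t]$ and passing to the limit in the identity above using the convergence from the previous paragraph shows that $t_*w\in\mathcal{N}_{\kappa_{\min},1,1}$; the uniqueness of the fibre projection guaranteed by $(f_3)$ then gives $t_*=1$. A contradiction argument against sequences $y_n\in\mathcal{V}$ (using compactness of $\mathcal{V}$, which holds because $\kappa_{\min}<\kappa_\infty$) promotes this to the uniform statement $t_\vr\to 1$ uniformly in $y\in\mathcal{V}$. Combining the two paragraphs yields
$$
J_\vr(\Pi_\vr(y))=\Phi_{\kappa_{\min},1,1}(t_\vr w)+o_\vr(1)\longrightarrow \Phi_{\kappa_{\min},1,1}(w)=m_{\kappa_{\min},1,1},
$$
uniformly in $y\in\mathcal{V}$. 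The main obstacle I expect is ensuring that every $o_\vr(1)$ remainder is genuinely uniform in $y\in\mathcal{V}$; this is resolved by the contradiction scheme based on the compactness of $\mathcal{V}$, the continuity of $V$, and the exponential decay of $w$.
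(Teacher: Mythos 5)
Your proposal is correct and follows essentially the same route as the paper: Lebesgue dominated convergence (justified by the exponential decay of $w$) to pass the rescaled cutoff to the limit in both the quadratic and nonlocal terms, a boundedness argument plus the Nehari/fibre-uniqueness property from $(f_3)$ to conclude $t_\vr\to 1$, and a contradiction scheme on sequences $y_n\in\mathcal{V}$ (using compactness of $\mathcal{V}$, which holds since $\kappa_{\min}<\kappa_\infty$) to upgrade to uniformity. One small inaccuracy worth cleaning up: the Ambrosetti--Rabinowitz condition $\theta G(s)\leq 2sg(s)$ gives $G(ts)\geq t^{\theta/2}G(s)$ for $t\geq 1$ rather than $G(ts)\geq Ct^\theta s^\theta$; this still forces an upper bound on $t_\vr$ because the Nehari right-hand side contains a product $G\cdot(gu)$ growing at least like $t^{\theta}$ with $\theta>2$, so the conclusion is unaffected.
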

\begin{proof}
	By contradiction, there exist $\delta_{0}>0$, $(y_{n})\subset \mathcal{V}$ and $\vr_{n}\rightarrow0$ such that
\begin{equation}\label{3.1}
|J_{\vr_n}(\Pi_{\vr_n}(y_{n}))-m_{\kappa_{\min},1,1}|\geq\delta_{0}.
\end{equation}
From Lebesgue's theorem,
$$
\lim_{n\rightarrow\infty}\int_{\mathbb{R}^{3N}}(|\nabla (t_{\vr_{n}}\Psi_{\vr_n,y_{n}})|^2+V(\vr_n x)|t_{\vr_{n}}\Psi_{\vr_n,y_{n}}|^2)dx=\int_{\mathbb{R}^{3}}(|\nabla w |^2+V_0|w|^2)dx
$$
and
$$
\lim_{n\rightarrow\infty}\int_{\R^3}\int_{\R^3}\frac{G(t_{\vr_{n}}\Psi_{\vr_n,y_{n}})G(t_{\vr_{n}} \Psi_{\vr_n,y_{n}})}{|x-y|^{\mu}}dxdy=\int_{\R^3}\int_{\R^3}\frac{G(w(y))G(w(x))}{|x-y|^{\mu}}dxdy.
$$
Since
$t_{\vr_{n}}\Psi_{\vr_n,y_n}\in{\cal{N}}_{\vr_n}$, it is easy to see the sequence $t_{\vr_{n}}\rightarrow1$. In fact, from the below equality
$$
t^2_{\vr_{n}}\int_{\mathbb{R}^{3}}|\nabla\Psi_{\vr_n,y_{n}}|^2+V(\vr_n x)|\Psi_{\vr_n,y_{n}}|^2dx=\frac{1}{6-\mu}\int_{\R^3}\int_{\R^3}\frac{G(t_{\vr_{n}}\Psi_{\vr_n,y_{n}})g(t_{\vr_{n}} \Psi_{\vr_n,y_{n}})t_{\vr_{n}}\Psi_{\vr_n,y_{n}}}{|x-y|^{\mu}}dxdy,
$$
we derive
$$
\|w\|_{\varepsilon}^2=\lim_{n\to \infty}\int_{\R^3}\int_{\R^3}\frac{G(t_{\vr_{n}}\Psi_{\vr_n,y_{n}})g(t_{\vr_{n}} \Psi_{\vr_n,y_{n}})t_{\vr_{n}}\Psi_{\vr_n,y_{n}}}{t^2_{\vr_{n}}|x-y|^{\mu}}dxdy.
$$
Now, using the fact that $w$ is a ground state solution of problem (\ref{A}) together with $(f'_4)$, we get that $t_{\vr_{n}}\rightarrow1$.
Now, note that
$$
\aligned
J_{\vr_{n}}(\Pi_{\vr_{n}}(y_{n}))&=\frac{t_{\vr_{n}}^{2}}{2}\int_{\mathbb{R}^{3}}|\nabla(\eta(|\vr_{n}x|)w(x))|^{2}dx+\frac{t_{\vr_{n}}^{2}}{2}\int_{\mathbb{R}^{3}}V(\vr_{n}x+y_{n})|(\eta(|\vr_{n}x|)w(x))|^{2}dx\\
&\hspace{5mm}-\frac{1}{2(6-\mu)}\int_{\R^3}\int_{\R^3}\frac{G(t_{\vr_{n}}\eta(|\vr_{n}y|)w(y))G(t_{\vr_{n}}\eta(|\vr_{n}x|)w(x))}{|x-y|^{\mu}}dxdy.
\endaligned
$$
Letting $n\rightarrow\infty$, we get $\lim_{n\rightarrow\infty}J_{\vr_{n}}(\Pi_{\vr_{n}}(y_{n}))=m_{\kappa_{\min},1,1}$, which contradicts with \eqref{3.1}.
\end{proof}

\noindent
For any $\delta>0$, let $\rho=\rho(\delta)>0$ be such that $M_\delta\subset B_\rho(0)$. Let $\chi:\R^3\to \R^3$ be defined as
$$
\chi(x):=x \,\,\, \quad \mbox{for $|x|\leq\rho$} \qquad \mbox{and} \qquad \chi(x):=\frac{\rho x}{|x|} \quad \mbox{for $|x|\geq\rho$}.
$$
Finally, let us consider $\beta_\vr:{\cal N}_\vr\to \R^3$ given by
$$
 \beta_\vr(u):= \frac{\displaystyle\int_{\mathbb{R}^{3}}\chi(\vr x)|u|^2dx}{\displaystyle\int_{\mathbb{R}^{3}}|u|^2dx}.
$$
Using the above notations,  by the Lebesgue's theorem permits to show the following lemma
\begin{lem}\label{noempty} The function $\Pi_{\vr}$ verifies
$$
\lim_{\vr \rightarrow 0}\beta_\vr(\Pi_{\vr}(y))=y, \quad \hbox{uniformly in}\ y\in \mathcal{V}.
$$
\end{lem}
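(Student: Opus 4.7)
The plan is to change variables so as to remove the dependence on $y$ inside the ground state $w$ and inside the cut--off $\eta$, leaving only the dependence on $y$ carried by the barycenter weight $\chi(\vr x)$, and then to pass to the limit by dominated convergence, upgrading it to uniform convergence in $y\in\mathcal{V}$ through a compactness argument.

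First, I would perform the substitution $z=x-y/\vr$ in both the numerator and the denominator of
$$
\beta_\vr(\Pi_\vr(y))=\frac{\displaystyle\int_{\R^3}\chi(\vr x)\,|t_\vr\Psi_{\vr,y}(x)|^2\,dx}{\displaystyle\int_{\R^3}|t_\vr\Psi_{\vr,y}(x)|^2\,dx}.
$$
After this change of variables $\Psi_{\vr,y}$ becomes $\eta(\vr|z|)\,w(z)$, the factor $t_\vr^2$ cancels from the ratio, and the shift $\chi(\vr z+y)$ is the only place where $y$ still appears.

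Next, I would use condition $(V)$ to observe that $\mathcal{V}=\{V=\kappa_{\min}\}$ is compact: since $\kappa_{\min}<\kappa_\infty$, $V$ stays strictly above $\kappa_{\min}$ outside some large ball, and $\mathcal{V}$ is closed as the preimage of a closed set under a continuous function. Enlarging $\rho$ if necessary, one may assume $\mathcal{V}\subset B_\rho(0)$, so that $\chi(y)=y$ for every $y\in\mathcal{V}$. For fixed $z$, $\eta(\vr|z|)\to 1$ and $\chi(\vr z+y)\to y$ as $\vr\to 0$, while $|\chi(\vr z+y)\,\eta(\vr|z|)^2\,w(z)^2|\leq \rho\,w(z)^2\in L^1(\R^3)$ thanks to the exponential decay of $w$ furnished by Lemma~\ref{EDP}. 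Lebesgue's theorem then yields, for each fixed $y\in\mathcal{V}$, that the numerator tends to $y\int_{\R^3}w^2\,dz$ and the denominator to $\int_{\R^3}w^2\,dz$, hence $\beta_\vr(\Pi_\vr(y))\to y$.

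Finally, to promote this pointwise convergence to uniform convergence on $\mathcal{V}$, I would argue by contradiction: suppose there existed $\vr_n\to 0$, $y_n\in\mathcal{V}$ and $\sigma_0>0$ with $|\beta_{\vr_n}(\Pi_{\vr_n}(y_n))-y_n|\geq\sigma_0$. By compactness of $\mathcal{V}$ one may pass to a subsequence along which $y_n\to y^*\in\mathcal{V}$; since the $L^1$ dominant $\rho\,w^2$ used above does not depend on $y$, the same dominated-convergence computation along this sequence delivers $\beta_{\vr_n}(\Pi_{\vr_n}(y_n))\to y^*$, contradicting $|\beta_{\vr_n}(\Pi_{\vr_n}(y_n))-y_n|\geq\sigma_0$ because $y_n\to y^*$. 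The main obstacle is essentially verifying that $t_\vr$ does not spoil the limit (it cancels in the ratio) and that the dominating function can be chosen independently of $y$; everything else is a standard barycenter argument in the spirit of Lemma~\ref{mapping2}.
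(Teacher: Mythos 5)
Your proof is correct and follows essentially the same route the paper intends: the paper dispatches this lemma with a one‑line appeal to Lebesgue's dominated convergence theorem ("Using the above notations, by the Lebesgue's theorem..."), and your argument supplies precisely the missing details—the translation $z = x - y/\vr$, the cancellation of $t_\vr^2$ in the ratio, the $y$‑independent dominant $\rho\,w(z)^2$, and the pointwise limits $\eta(\vr|z|)\to 1$, $\chi(\vr z+y)\to y$. Two very small remarks: the step ``enlarge $\rho$ if necessary'' is redundant, since the paper already fixes $\rho$ with $\mathcal V_\delta\subset B_\rho(0)$, hence $\mathcal V\subset B_\rho(0)$; and for the domination one only needs $w\in L^2(\R^3)$, which is automatic from $w\in H^1$, so invoking the exponential decay of Lemma~\ref{EDP} is more than is required. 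Your compactness-and-contradiction argument for upgrading pointwise to uniform convergence on $\mathcal V$ is a genuine and worthwhile addition, since the paper asserts uniformity without comment; it rests correctly on the compactness of $\mathcal V$, which follows from $(V)$ because $\kappa_{\min}<\kappa_\infty$ forces $\mathcal V$ to be bounded and continuity of $V$ makes it closed.
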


\noindent
Let $h:\R^+\to\R^+$ be a positive function tending to $0$ such that $h(\vr)\to 0$ as $\vr\to0$ and let
$${\cal{\hat{N}}}_{\vr}:=\{u\in {\cal{N}}_{\vr}:J_{\vr}(u)\leq m_{\kappa_{\min},1,1}+h(\vr)\}.
$$
From Lemma \ref{noempty}, we know ${\cal{\hat{N}}}_{\vr}\neq\emptyset$.

\begin{lem}\label{mapping3}
Let $\delta>0$ and $ \mathcal{V}_\delta=\{x\in \R^3: \dist(x, \mathcal{V})\leq \delta\}$. Then
$$
\lim_{\vr\to0}\sup_{u\in {\cal{\hat{N}}}_{\vr}}\inf_{y\in  \mathcal{V}_\delta}|\beta_\vr(u)-y|=0.
$$
\end{lem}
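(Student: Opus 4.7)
The plan is to argue by contradiction. Suppose the claim fails; then there exist $\delta_0>0$ and sequences $\vr_n\to 0$ and $u_n\in\widehat{\cal N}_{\vr_n}$ such that
$$
\inf_{y\in\mathcal{V}_\delta}|\beta_{\vr_n}(u_n)-y|\geq\delta_0,\qquad \forall n\in\mathbb{N}.
$$
By definition of $\widehat{\cal N}_{\vr_n}$ and since $h(\vr_n)\to 0$, we have $m_{\kappa_{\min},1,1}\leq J_{\vr_n}(u_n)\leq m_{\kappa_{\min},1,1}+h(\vr_n)$, so $J_{\vr_n}(u_n)\to m_{\kappa_{\min},1,1}$. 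Combined with the Nehari identity $\langle J'_{\vr_n}(u_n),u_n\rangle=0$ and the Ambrosetti-Rabinowitz condition, this yields a uniform bound $\|u_n\|_{\vr_n}\leq C$.

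The first step is a non-vanishing argument for $(u_n)$: mimicking the concentration-compactness analysis in the proof of Lemma \ref{Existence} (using that the limit $m_{\kappa_{\min},1,1}$ lies strictly below the critical threshold $\frac{5-\mu}{2(6-\mu)}S_{H,L}^{(6-\mu)/(5-\mu)}$, by Lemma \ref{EMP1} applied to $\Phi_{\kappa_{\min},1,1}$), one obtains $r,\beta>0$ and a sequence $(y_n)\subset\R^3$ with $\int_{B_r(y_n)}|u_n|^2dx\geq\beta$. Set $v_n(x):=u_n(x+y_n)$; then $(v_n)$ is bounded in $E$, and up to a subsequence $v_n\rightharpoonup v\not\equiv 0$.

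The second step is to show that $(\vr_n y_n)$ is bounded and, along a subsequence, converges to some $y_0\in\mathcal{V}$. Boundedness follows from hypothesis $(V)$ together with the comparison of limit energies, exactly as in Lemma \ref{CP}: if $|\vr_n y_n|\to\infty$, then $V(\vr_n x+\vr_n y_n)\to\kappa_\infty$, so the weak limit $v$ solves the limit problem with potential $\kappa_\infty$, and Fatou's lemma plus the Nehari characterization gives
$$
m_{\kappa_\infty,1,1}\leq\Phi_{\kappa_\infty,1,1}(v)\leq\liminf_{n\to\infty}J_{\vr_n}(u_n)=m_{\kappa_{\min},1,1},
$$
contradicting Lemma \ref{CO} since $\kappa_{\min}<\kappa_\infty$. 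Hence $\vr_n y_n\to y_0\in\R^3$ up to a subsequence. If $y_0\notin\mathcal{V}$, then $V(y_0)>\kappa_{\min}$, and the same Fatou argument gives $m_{V(y_0),1,1}\leq m_{\kappa_{\min},1,1}$, again contradicting Lemma \ref{CO}. Therefore $y_0\in\mathcal{V}$.

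The final step is to compute the barycenter. Writing
$$
\beta_{\vr_n}(u_n)=\frac{\int_{\R^3}\chi(\vr_n x+\vr_n y_n)|v_n(x)|^2dx}{\int_{\R^3}|v_n(x)|^2dx}
=\vr_n y_n+\frac{\int_{\R^3}\bigl[\chi(\vr_n x+\vr_n y_n)-\vr_n y_n\bigr]|v_n|^2dx}{\int_{\R^3}|v_n|^2dx},
$$
and using that $\chi$ is continuous and bounded together with the tightness of $|v_n|^2$ (which follows from $v_n\to v$ in $L^2$ up to a subsequence, via the same compactness argument yielding strong convergence, or alternatively by uniform integrability), the second term tends to $0$. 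Hence $\beta_{\vr_n}(u_n)\to y_0\in\mathcal{V}\subset\mathcal{V}_\delta$, so $\inf_{y\in\mathcal{V}_\delta}|\beta_{\vr_n}(u_n)-y|\to 0$, contradicting the standing assumption. The main obstacle is the second step---verifying $y_0\in\mathcal{V}$---since it requires combining the strict level comparison of Lemma \ref{CO} with the hypothesis $\kappa_{\min}<\kappa_\infty$ in $(V)$; once this is in place, the barycenter computation is routine.
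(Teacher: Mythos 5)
Your overall scaffolding (contradiction argument, non-vanishing, boundedness of $\vr_n y_n$, barycenter computation) is close to the paper's, but there is a genuine gap in the middle step that the paper handles with a different mechanism.

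In your second step you write that, if $|\vr_ny_n|\to\infty$, then ``the weak limit $v$ solves the limit problem with potential $\kappa_\infty$'', patterned on Lemma~\ref{CP}. But in Lemma~\ref{CP} the sequence consists of actual critical points of $\tilde I_{\vr_n}$, so one can test the equation and pass to the weak limit. Here $u_n\in\widehat{\cal N}_{\vr_n}$ only means $u_n\in{\cal N}_{\vr_n}$ with $J_{\vr_n}(u_n)\le m_{\kappa_{\min},1,1}+h(\vr_n)$; it is a near-minimizer on the Nehari manifold, not a Palais--Smale sequence, and certainly not a sequence of solutions. So there is no equation for $v_n$ to pass to the limit in, and $\Phi'_{\kappa_\infty,1,1}(v)=0$ (or even $v\in{\cal N}_{\kappa_\infty,1,1}$) does not follow. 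Without that, the inequality $m_{\kappa_\infty,1,1}\le\Phi_{\kappa_\infty,1,1}(v)$ is unjustified. In addition, the nonlocal term in $\Phi$ carries a minus sign and is not weakly lower semicontinuous, so the loose phrase ``Fatou's lemma plus the Nehari characterization'' does not on its own produce the claimed comparison of levels.

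The paper sidesteps both issues by projecting the translated sequence onto the \emph{autonomous} Nehari manifold: set $\tilde v_n=t_n v_n\in{\cal N}_{\kappa_{\min},1,1}$ with $t_n$ the unique projection parameter from $(f_3)$. Using $V(\vr x)\ge\kappa_{\min}$ and the fact that $t\mapsto J_{\vr}(tu_n)$ is maximized at $t=1$ on ${\cal N}_{\vr}$, one gets the chain $m_{\kappa_{\min},1,1}\le\Phi_{\kappa_{\min},1,1}(\tilde v_n)\le J_{\vr_n}(u_n)\to m_{\kappa_{\min},1,1}$, so $(\tilde v_n)$ is a minimizing sequence for the autonomous problem on its Nehari manifold. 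Ekeland's variational principle then upgrades it to a bounded $(PS)$ sequence, and a Brezis--Lieb splitting (as in Lemma~\ref{splitting}) yields strong convergence $\tilde v_n\to\tilde v$ and hence $v_n\to v$ in $E$. That strong convergence is also precisely what powers the boundedness-of-$\vr_ny_n$ argument (one compares energies by passing to the limit along the strongly convergent sequence rather than invoking any equation for $v$) and, importantly, it is what justifies the tightness of $|v_n|^2$ in the final barycenter step, which in your proposal is asserted (``via the same compactness argument yielding strong convergence'') rather than established. To repair your proof you should insert this projection/Ekeland/Brezis--Lieb passage before the boundedness argument, and then run the energy comparison using $\tilde v_n\to\tilde v$ in $E$ rather than claiming that $v$ solves a limit equation.
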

\begin{proof}
Let $\vr_n \to 0$. For each $n\in \N$, there exists $(u_n)\subset {\cal{\hat{N}}}_{\vr_n}$, such that
$$
\inf_{y\in \mathcal{V}_\delta}|\beta_{\vr_n}(u_n)-y|=\sup_{u\in {\cal{\hat{N}}}_{\vr_n}}\inf_{y\in \mathcal{V}_\delta}|\beta_{\vr_n}(u)-y|+o_n(1).
$$
Since $(u_n)\subset {\cal{\hat{N}}}_{\vr_n} \subset {\cal{{N}}}_{\vr_n}$, it follows that
$
m_{\kappa_{\min},1,1}\leq m_{\vr_n}\leq J_{\vr_n}(u_n)\leq m_{\kappa_{\min},1,1}+h(\vr_n),
$
which means that
$$
J_{\vr_n}(u_n)\to m_{\kappa_{\min},1,1}\ \ \hbox{and} \ \ (u_n)\subset {\cal{N}}_{\vr_n}.
$$
 By repeating the arguments in Lemma \ref{Existence}, there exist $(y_{n}) \subset \R^{3}$ and $r,\delta >0$ such that
$$
\int_{B_{r}(y_{n})}|u_{n}|^{2}dx \geq \delta, \quad n \in \mathbb{N}.
$$
Setting  $v_{n}(x)=u_{n}(x+{y}_{n})$, up to a subsequence, if necessary, we can assume  $v_{n}\rightharpoonup v\not\equiv0$ in $E$. Let $t_{n}>0$ be such that $\tilde{v}_{n}=t_{n}v_{n}\in {\cal N}_{\kappa_{\min},1,1}$. Then,
$$
m_{\kappa_{\min},1,1}\leq \Phi_{\kappa_{\min},1,1}(\tilde{v}_{n})=\Phi_{\kappa_{\min},1,1}(t_{n}u_{n})\leq J_{\vr}(t_{n}u_{n})\leq J_{\vr}(u_{n})\to m_{\kappa_{\min},1,1}
$$
and so,
$$
\Phi_{\kappa_{\min},1,1}(\tilde{v}_{n})\rightarrow m_{\kappa_{\min},1,1}\ \hbox{and}\  (\tilde{v}_{n})\subset {\cal N}_{\kappa_{\min},1,1}.
$$
Then the sequence $(\tilde{v}_{n})$ is a minimizing sequence, and by Ekeland's variational principle \cite{MW}, we may also assume it is a bounded $(PS)$ sequence at $m_{\kappa_{\min},1,1}$. Thus, for some subsequence, $\tilde{v}_{n}\rightharpoonup \tilde{v}$ weakly in $E$ with $\tilde{v}\neq 0$ and $\Phi'_{\min,1,1}(\tilde{v})=0$. Then we can obtain that
$$
\Phi_{\kappa_{\min},1,1}(\tilde{v}_{n}-\tilde{v})\to 0,\  \   \Phi'_{\kappa_{\min},1,1}(\tilde{v}_{n}-\tilde{v})\to 0.
$$
Hence,
$$
\|\tilde{v}_{n}-\tilde{v}\|^2\leq C\lim_{n\to\infty}\big(\Phi_{\kappa_{\min},1,1}(\tilde{v}_{n}-\tilde{v})-\frac{1}{\theta(6-\mu)}\langle\Phi'_{\kappa_{\min},1,1}(\tilde{v}_{n}-\tilde{v}),\tilde{v}_{n}-\tilde{v}\rangle\big)=0,
$$
showing that $\tilde{v}_{n}\to\tilde{v}$ in $E$. Since $(t_n)$ is bounded, we can assume that for some subsequence $t_{n}\rightarrow t_{0}>0$, and so, $v_{n}\rightarrow v$ in $E$.
Now, we will show that $(\vr_{n}{y}_{n})$ has a subsequence satisfying $\vr_{n}{y}_{n}\rightarrow y\in  \mathcal{V}$. First we claim
$(\vr_{n}{y}_{n})$ is bounded in $\mathbb{R}^3$. Indeed, suppose by contradiction there exists a subsequence, still denoted by $(\vr_ny_{n})$, such that $|\vr_ny_{n}|\rightarrow\infty$. Since $\tilde{v}_{n}\rightarrow \tilde{v}$ in $E$ and $\kappa_{\min}<\kappa_{\infty}$, we have
$$
\aligned
m_{\kappa_{\min},1,1}&=\Phi_{\kappa_{\min},1,1}(\tilde{v})
<\Phi_{\kappa_{\infty},1,1}(\tilde{v})\\
&\leq\liminf_{n\rightarrow\infty}\left[\frac{1}{2}\int_{\mathbb{R}^{3}}|\nabla \tilde{v}_{n}|^{2}dx+\frac{1}{2}\int_{\mathbb{R}^{3}}V(\vr_{n}x+\vr_ny_{n})| \tilde{v}_{n}|^{2}dx-\frac{1}{2(6-\mu)}\int_{\R^3}\int_{\R^3}\frac{G(\tilde{v}_{n}(y))G(\tilde{v}_{n}(x))}{|x-y|^{\mu}}dxdy\right]\\
&=\liminf_{n\rightarrow\infty}\left[\frac{t_{n}^{2}}{2}\int_{\mathbb{R}^{3}}|\nabla u_{n}|^{2}dx+\frac{t_{n}^{2}}{2}\int_{\mathbb{R}^{3}}V(\vr_{n}x)| u_{n}|^{2}dx-\frac{1}{2(6-\mu)}\int_{\R^3}\int_{\R^3}\frac{G(t_{n}u_{n}(y))G(t_{n}u_n(x))}{|x-y|^{\mu}}dxdy\right]\\
&\leq \liminf_{n\rightarrow\infty}J_{\vr_n}(u_n)
=m_{\kappa_{\min},1,1},
\endaligned
$$
which does not make sense, showing that $(\vr_ny_{n})$ is bounded. Thus there exists a sequence $({y}_n)\subset \R^3$ such that $v_n(z)=u_n(x+{y}_n)$ has a convergent subsequence in $E$ and up to a subsequence, $\vr_{n}{y}_{n}\rightarrow y\in  \mathcal{V}$. Thus,
\begin{align*}
\label{BCF}
\beta_{\vr_n}(u_n)&=\frac{\displaystyle\int_{\mathbb{R}^{3}}\chi(\vr_n x)|u_n|^2dx}{\displaystyle\int_{\mathbb{R}^{3}}|u_n|^2dx}\\
&=\frac{\displaystyle\int_{\mathbb{R}^{3}}\chi(\vr_n x+\vr_ny_{n})|u_n(x+{y}_n)|^2dx}{\displaystyle\int_{\mathbb{R}^{3}}|u_n(x+{y}_n)|^2dx}
=\vr_ny_{n}+\frac{\displaystyle\int_{\mathbb{R}^{3}}[\chi(\vr_n x+\vr_ny_{n})-\vr_ny_{n}]|v_n(x)|^2dx}{\displaystyle\int_{\mathbb{R}^{3}}|v_n(x)|^2dx}\to y\in \mathcal{V}.
\end{align*}
Consequently, there exists $\vr_ny_n\in \mathcal{V}_\delta$ such that
$$
\lim_{n\to \infty}|\beta_{\vr_n}(u_n)-\vr_ny_n|=0,
$$
finishing the proof of the lemma.
\end{proof}

\begin{thm}[Multiplicity\ of\ solution]
	\label{MS}
Suppose that the nonlinearity $f$ satisfies $(f_1)-(f_3)$ and the potential function $V$ satisfies condition $(V)$. Then for any $\delta>0$ there exists $\vr_\delta>0$ such that problem $(SCC2)$ has at least $\cat_{\mathcal{V}_\delta} (\mathcal{V})$ positive solutions, $u_\vr$ for all $\vr<\vr_\delta$.
\end{thm}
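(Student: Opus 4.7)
The plan is to combine the three technical lemmas (Lemma \ref{mapping2}, Lemma \ref{noempty}, Lemma \ref{mapping3}) already established, together with the Palais--Smale condition from Corollary \ref{PSlemma:N} and classical Lusternik--Schnirelmann category theory applied on the Nehari manifold $\cal{N}_\vr$. The strategy is standard once these preparations are in hand: build a map that shows the homotopy type of a suitable sublevel set of $J_\vr|_{\cal{N}_\vr}$ dominates that of $\mathcal{V}_\delta$ relative to $\mathcal{V}$, and then count critical points.

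First I would choose the function $h(\vr)\to 0$ (that defines ${\cal{\hat{N}}}_\vr$) so that, for $\vr$ small enough,
\[
m_{\kappa_{\min},1,1}+h(\vr)<m_{\kappa_\infty,1,1},
\]
which is possible by the strict inequality $\kappa_{\min}<\kappa_\infty$ combined with Lemma \ref{CO}. By Corollary \ref{PSlemma:N}, $J_\vr|_{\cal{N}_\vr}$ then satisfies the $(PS)_c$ condition for every $c\leq m_{\kappa_{\min},1,1}+h(\vr)$, so the deformation lemma applies on ${\cal{\hat{N}}}_\vr$. By Lemma \ref{mapping2} the image $\Pi_\vr(\mathcal{V})\subset {\cal{\hat{N}}}_\vr$ provided $\vr<\vr_\delta$.

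Next I would prove that, for small $\vr$, the composition $\beta_\vr\circ\Pi_\vr:\mathcal{V}\to \mathcal{V}_\delta$ is homotopic to the inclusion $\mathcal{V}\hookrightarrow \mathcal{V}_\delta$. Lemma \ref{noempty} gives $\beta_\vr(\Pi_\vr(y))\to y$ uniformly in $y\in\mathcal{V}$, so the straight-line homotopy
\[
H(s,y):=(1-s)\,y+s\,\beta_\vr(\Pi_\vr(y)),\qquad (s,y)\in[0,1]\times\mathcal{V},
\]
takes values in $\mathcal{V}_\delta$ once $\vr$ is small enough (here I use that $\mathcal{V}_\delta$ is a uniform neighbourhood of $\mathcal{V}$ and $\chi$ was chosen so that $\beta_\vr$ lands in a fixed ball). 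Combining with Lemma \ref{mapping3}, which forces every $u\in{\cal{\hat{N}}}_\vr$ to satisfy $\beta_\vr(u)\in\mathcal{V}_\delta$ (up to $o_\vr(1)$), the standard category inequality
\[
\cat_{{\cal{\hat{N}}}_\vr}({\cal{\hat{N}}}_\vr)\geq \cat_{\mathcal{V}_\delta}(\mathcal{V})
\]
follows: any categorical covering of ${\cal{\hat{N}}}_\vr$ is pulled back to one of $\mathcal{V}$ in $\mathcal{V}_\delta$ through $\Pi_\vr$ and $\beta_\vr$.

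With this in hand, the Lusternik--Schnirelmann theorem applied to the $C^1$ functional $J_\vr|_{\cal{N}_\vr}$ (which is bounded below on the Nehari manifold by $m_\vr>0$) yields at least $\cat_{\mathcal{V}_\delta}(\mathcal{V})$ critical points in ${\cal{\hat{N}}}_\vr$; since $\cal{N}_\vr$ is a natural constraint these are critical points of $J_\vr$ on $E$. Working throughout with $|u|$ and invoking the strong maximum principle (after the Moser iteration used in Section 3 yields $C^2$ regularity) gives positivity. Finally, for any such solution $u_\vr$ with maximum point $\eta_\vr$, the concentration $V(\eta_\vr)\to \kappa_{\min}$ as $\vr\to 0$ is obtained by repeating verbatim the argument of Lemma \ref{CP} (with $V$ and $\kappa_{\min}$ in place of $Q$ and $\nu_{\max}$) using that $J_\vr(u_\vr)\leq m_{\kappa_{\min},1,1}+h(\vr)$.

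The main obstacle is verifying that the $(PS)$ condition actually holds on the sublevel set ${\cal{\hat{N}}}_\vr$ at levels below $m_{\kappa_\infty,1,1}$, because the critical Hardy--Littlewood--Sobolev nonlinearity introduces the competing threshold $\tfrac{5-\mu}{2(6-\mu)}S_{H,L}^{(6-\mu)/(5-\mu)}$ from Lemma \ref{PS1}. This is handled by Lemma \ref{EMP1}, which ensures $m_{\kappa_{\min},1,1}$ lies strictly below that critical level, so the choice of $h(\vr)$ keeps ${\cal{\hat{N}}}_\vr$ safely inside the compactness range provided by Lemma \ref{PS} and Corollary \ref{PSlemma:N}; after that step the rest of the proof is the classical Benci--Cerami--Passaseo scheme.
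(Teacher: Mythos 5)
Your proposal is correct and follows essentially the same route as the paper: use Lemmas \ref{mapping2}, \ref{noempty} and \ref{mapping3} to show $\beta_\vr\circ\Pi_\vr$ is homotopic to the inclusion $\mathcal{V}\hookrightarrow\mathcal{V}_\delta$, deduce $\cat_{\hat{\cal N}_\vr}(\hat{\cal N}_\vr)\geq\cat_{\mathcal{V}_\delta}(\mathcal{V})$, and invoke Lusternik--Schnirelmann theory together with the $(PS)_c$ property from Corollary \ref{PSlemma:N} on the sublevel ${\cal{\hat{N}}}_\vr$. Your additional remark that $h(\vr)$ must keep the levels below both $m_{\kappa_\infty,1,1}$ and the Hardy--Littlewood--Sobolev threshold $\tfrac{5-\mu}{2(6-\mu)}S_{H,L}^{(6-\mu)/(5-\mu)}$ is a legitimate refinement of a point the paper's proof leaves implicit, not a deviation from it.
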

\begin{proof}
 We fix a small $\vr> 0.$ Then , by Lemma \ref{mapping2} and \ref{mapping3},  $\beta_\vr\circ\Pi_{\vr}$ is homotopic to the inclusion map $id:\mathcal{V}\rightarrow \mathcal{V}_\delta$ and so,
$$
\cat_{\hat{N}_\vr}(\hat{N}_\vr)\geq \cat_{\mathcal{V}_\delta} (\mathcal{V}).
$$
Since that functional $J_{\vr}$ satisfies the $(PS)_{c}$ condition for $c\in (m_{\kappa_{\min},1,1},m_{\kappa_{\min},1,1}+h(\vr)),$ by the Lusternik-Schnirelman theory of
critical points \cite{MW}, we can conclude that $I_{\vr}$ has at least $cat_{\mathcal{V}_\delta} (\mathcal{V})$ critical points on ${\cal N}_\vr$. Consequently, $J_{\vr}$ has at least $\cat_{\mathcal{V}_\delta} (\mathcal{V})$ critical points in $E$.
\end{proof}

\noindent {\bf Concentration behavior.}  Let $\vr_n\to 0$ and $(u_n)$ be a sequence of solutions obtained in Lemma \ref{MS}, then there exists a sequence $({y}_n)\in \R^3$ such that $\vr_ny_{n}\to y\in \mathcal{V}$ and $v_n(z)=u_n(x+{y}_n)$ has a convergent subsequence in $E$. Similar to the arguments in Lemma \ref{0AI}, we know that there exists $C>0$ independent of $n$ such that $|v_{n}|_{{\infty}}\leq C$ and
$$
\lim_{|x|\rightarrow \infty}v_{n}(x)=0\ \hbox{uniformly in}\ n\in \mathbb{N}.
$$
Furthermore there exist $C$, $\beta>0$ such that
$|v_{n}(x)|\leq C \exp(-\beta|x|)$.
Similar to the analysis in section 3, by Theorem \ref{GS} and \ref{MS}, we know the existence and multiplicity of positive ground state solutions for equation $(SCC2)$ for $\vr>0$ small enough. Therefore, the function ${w_\vr}(x)={u_\vr}(\frac{x}{\vr})$
is a positive solution of \eqref{SCP2}. Thus, the maximum points ${x_\vr}$ and ${z_\vr}$ of ${w_\vr}$ and ${u_\vr}$ respectively,
satisfy the equality ${x_\vr}=\vr{z_\vr}$. Setting $v_\vr(x):=w_\vr(\vr x+x_\vr)$, for any sequence $x_\vr\to x_0$, $\vr\to0$, it follows from Lemma \ref{CP} that,
$$
\lim_{\vr\to0}\dist(x_\vr, \mathcal{V})=0
$$
 and $v_\vr$ converges in $E$ to a ground state solution $v$ of
    $$
-\Delta u +\kappa_{\min}u  =\frac{1}{6-\mu}\Big(\int_{\R^3} \frac{ G(u(y))}{|x-y|^\mu}dy\Big)g(u).
$$
Moreover, for some $c,C>0$, we have
$|w_\vr(x)|\leq C\exp\big(-\frac{c}{\vr}|x-x_\vr|\big).$

\section{Appendix: estimates}

\noindent
In $\R^3$, we know that
$$
U(x)=\frac{3^{1/4}}{(1+|x|^{2})^{1/2}}
$$
is a minimizer for $S$, the best Sobolev constant. By Proposition \ref{ExFu}, we know that $U(x)$ is also a minimizer for $S_{H,L}$.
Consider a cut-off function $\psi\in C_{0}^{\infty}(\R^3)$ such that
$$
\psi(x)=1,\quad |x|\leq \delta,\qquad
\psi(x)=0, \quad |x|\geq 2\delta,
$$
where $\delta>0$ is given in Lemma \ref{EMP1}. We define, for $\varepsilon>0$,
\begin{equation}\label{B17}
\aligned
u_{\varepsilon}(x):=\psi(x)U_{\varepsilon}(x),\quad\,\,\text{where
$U_{\varepsilon}(x) :=\varepsilon^{-1/2}U\left(\frac{x}{\varepsilon}\right)$}.
\endaligned
\end{equation}
Then, we have

\begin{lem}\label{ECT}
If $\frac{6-\mu}{2}<q<6-\mu$, then there holds
$$
\int_{\R^3}\int_{\R^3}\frac{|u_{\varepsilon}(x)|^{q}
|u_{\varepsilon}(y)|^{q}}
{|x-y|^{\mu}}dxdy
\geq \O(\varepsilon^{6-\mu-q})-\O(\varepsilon^{\frac{6-\mu}{2}}),
$$
and
$$
\int_{\R^3}\int_{\R^3}\frac{|u_{\varepsilon}(x)|^{6-\mu}|u_{\varepsilon}(y)|^{6-\mu}}
{|x-y|^{\mu}}dxdy
\geq C(3,\mu)^{\frac{3}{2}}S_{H,L}^{\frac{6-\mu}{2}}-\O(\varepsilon^{\frac{6-\mu}{2}}).
$$
\end{lem}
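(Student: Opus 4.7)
The plan is to prove both estimates from a single template: replace the cut-off function $u_\varepsilon=\psi U_\varepsilon$ by $U_\varepsilon$ itself, compute the resulting double integral explicitly by scaling, and then control the error introduced by the cut-off via the Hardy-Littlewood-Sobolev inequality. For brevity denote
\[
\mathcal{D}(f,g):=\int_{\R^3}\int_{\R^3}\frac{f(x)g(y)}{|x-y|^\mu}\,dx\,dy.
\]

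First I would establish the scaling identity. Since $U_\varepsilon^q(x)=\varepsilon^{-q/2}U(x/\varepsilon)^q$, the change of variables $x=\varepsilon\bar x$, $y=\varepsilon\bar y$ yields
\[
\mathcal{D}(U_\varepsilon^q,U_\varepsilon^q)=\varepsilon^{6-\mu-q}\,\mathcal{D}(U^q,U^q),
\]
and the right-hand side is finite precisely when $6q/(6-\mu)>3$, i.e.\ $q>(6-\mu)/2$, which is exactly the hypothesis of the first estimate. For $q=6-\mu$ the scaling exponent vanishes, and combining $\int_{\R^3}|\nabla U|^2=S^{3/2}$ with the identification of $\tilde U=S^{(\mu-3)/(4(5-\mu))}C(3,\mu)^{-1/(2(5-\mu))}U$ as a minimizer of $S_{H,L}$ from Lemma~\ref{ExFu} and the relation $S_{H,L}=S/C(3,\mu)^{1/(6-\mu)}$, a short algebraic simplification gives $\mathcal{D}(U^{6-\mu},U^{6-\mu})=C(3,\mu)^{3/2}S_{H,L}^{(6-\mu)/2}$.

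Next I would estimate the cut-off error. Expanding
\[
\mathcal{D}(u_\varepsilon^q,u_\varepsilon^q)=\mathcal{D}(U_\varepsilon^q,U_\varepsilon^q)-2\mathcal{D}(U_\varepsilon^q-u_\varepsilon^q,U_\varepsilon^q)+\mathcal{D}(U_\varepsilon^q-u_\varepsilon^q,U_\varepsilon^q-u_\varepsilon^q),
\]
it suffices to upper-bound the two correction terms. The difference $U_\varepsilon^q-u_\varepsilon^q=(1-\psi^q)U_\varepsilon^q$ is supported in $\{|x|\geq\delta\}$, where the pointwise bound $U_\varepsilon(x)\leq C\varepsilon^{1/2}/|x|$ applies. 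A direct tail integration then yields $\int_{|x|\geq\delta}U_\varepsilon^{6q/(6-\mu)}dx\leq C\varepsilon^{3q/(6-\mu)}$ (convergent because $q>(6-\mu)/2$), hence $|U_\varepsilon^q-u_\varepsilon^q|_{6/(6-\mu)}\leq C\varepsilon^{q/2}$. On the other hand, scaling yields $|U_\varepsilon^q|_{6/(6-\mu)}\leq C\varepsilon^{(6-\mu-q)/2}$. Applying Proposition~\ref{HLS} with $t=s=6/(6-\mu)$ then bounds both correction terms by $\O(\varepsilon^{q/2}\cdot\varepsilon^{(6-\mu-q)/2})=\O(\varepsilon^{(6-\mu)/2})$, which is precisely the announced error.

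The main delicate point is the bookkeeping of exponents: the product $\varepsilon^{q/2}\cdot\varepsilon^{(6-\mu-q)/2}=\varepsilon^{(6-\mu)/2}$ must land exactly on the claimed error size, and the tail integrability threshold $6q/(6-\mu)>3$ must reproduce precisely the hypothesis $q>(6-\mu)/2$. Once this tracking is in place, specializing to $q\in((6-\mu)/2,6-\mu)$ produces the first estimate with leading term $\varepsilon^{6-\mu-q}\mathcal{D}(U^q,U^q)$ of order $\O(\varepsilon^{6-\mu-q})$, and specializing to $q=6-\mu$ produces the second, the scaling-invariant leading term being $C(3,\mu)^{3/2}S_{H,L}^{(6-\mu)/2}$.
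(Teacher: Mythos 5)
Your argument is correct. It is, however, structurally different from the paper's own proof, which is worth noting. The paper works by a \emph{domain} decomposition: it lower-bounds the double integral by restricting to $B_\delta\times B_\delta$ (where $\psi\equiv1$ identifies $u_\varepsilon$ with $U_\varepsilon$), then writes $\int_{B_\delta}\int_{B_\delta}=\A-2\B-\C$ with $\A$ over $B_{2\delta}\times B_{2\delta}$, $\B$ a cross term over $(B_{2\delta}\setminus B_\delta)\times B_\delta$, and $\C$ over $(B_{2\delta}\setminus B_\delta)^2$, and bounds each piece directly (the leading $\A$ by rescaling, $\B$ and $\C$ by pointwise estimates on $U_\varepsilon$ in the annular regions). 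You instead perform a \emph{function} decomposition, expanding $\mathcal{D}(u_\varepsilon^q,u_\varepsilon^q)$ around $\mathcal{D}(U_\varepsilon^q,U_\varepsilon^q)$ and bounding the correction $\mathcal{D}(U_\varepsilon^q-u_\varepsilon^q,U_\varepsilon^q)$ by Hardy--Littlewood--Sobolev with exponents $t=s=6/(6-\mu)$ and the tail estimate $|U_\varepsilon^q-u_\varepsilon^q|_{6/(6-\mu)}\leq C\varepsilon^{q/2}$ (the term $\mathcal{D}(U_\varepsilon^q-u_\varepsilon^q,U_\varepsilon^q-u_\varepsilon^q)$ is nonnegative so it can even be dropped for the lower bound, or estimated by $\O(\varepsilon^q)=o(\varepsilon^{(6-\mu)/2})$). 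Your route buys a single unified template for both $q\in((6-\mu)/2,6-\mu)$ and $q=6-\mu$, avoids geometric bookkeeping over annuli, and makes the appearance of the threshold $q>(6-\mu)/2$ transparent through the $L^{6/(6-\mu)}$-integrability of $U^q$; the algebraic identification $\mathcal{D}(U^{6-\mu},U^{6-\mu})=C(3,\mu)^{3/2}S_{H,L}^{(6-\mu)/2}$ via Lemma~\ref{ExFu} and the relation $S_{H,L}=S/C(3,\mu)^{1/(6-\mu)}$ also checks out. The paper's approach, by contrast, is more elementary (pointwise kernel estimates rather than the sharp HLS constant in the error control) and hews closer to the classical Brezis--Nirenberg calculus with cut-off bubbles.
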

\begin{proof}

To estimate the convolution part, we know
\begin{equation}\label{D3}
\aligned
\int_{\R^3}\int_{\R^3}\frac{|u_{\varepsilon}(x)|^{q}
|u_{\varepsilon}(y)|^{q}}
{|x-y|^{\mu}}dxdy
&\geq \int_{B_{\delta}}\int_{B_{\delta}}\frac{|u_{\varepsilon}(x)|^{q}
|u_{\varepsilon}(y)|^{q}}{|x-y|^{\mu}}dxdy\\
&=\int_{B_{\delta}}\int_{B_{\delta}}\frac{|U_{\varepsilon}(x)|^{q}
|U_{\varepsilon}(y)|^{q}}{|x-y|^{\mu}}dxdy\\
&=\int_{B_{2\delta}}\int_{B_{2\delta}}\frac{|U_{\varepsilon}(x)|^{q}
|U_{\varepsilon}(y)|^{q}}{|x-y|^{\mu}}dxdy-2\int_{B_{2\delta}\setminus B_{\delta}}\int_{B_{\delta}}\frac{|U_{\varepsilon}(x)|^{q}|U_{\varepsilon}(y)|^{q}}{|x-y|^{\mu}}dxdy\\
&\hspace{12mm}-\int_{B_{2\delta}\setminus B_{\delta}}\int_{B_{2\delta}\setminus B_{\delta}}\frac{|U_{\varepsilon}(x)|^{q}|U_{\varepsilon}(y)|^{q}}{|x-y|^{\mu}}dxdy\\
&:=\A-2\B-\C,
\endaligned
\end{equation}
where
\begin{align*}
\A&:=\int_{B_{2\delta}}\int_{B_{2\delta}}\frac{|U_{\varepsilon}(x)|^{q}
|U_{\varepsilon}(y)|^{q}}{|x-y|^{\mu}}dxdy, \\
\B&:=\int_{B_{2\delta}\setminus B_{\delta}}\int_{B_{\delta}}\frac{|U_{\varepsilon}(x)|^{q}|U_{\varepsilon}(y)|^{q}}{|x-y|^{\mu}}dxdy, \\
\C&:=\int_{B_{2\delta}\setminus B_{\delta}}\int_{B_{2\delta}\setminus B_{\delta}}\frac{|U_{\varepsilon}(x)|^{q}|U_{\varepsilon}(y)|^{q}}{|x-y|^{\mu}}dxdy.
\end{align*}
We are going to estimate $\A$, $\B$ and $\C$. By direct computation, we know, for $\varepsilon<1$,
\begin{equation*}
\aligned
\A&\geq\varepsilon^{-q}C\int_{B_{\delta}}\int_{B_{\delta}}\frac{1}
{(1+|\frac{x}{\varepsilon}|^{2})^{\frac{q}{2}}|x-y|^{\mu}(1+|\frac{y}{\varepsilon}|^{2})^{\frac{q}{2}}}dxdy\\
&=C\varepsilon^{6-\mu-q}\int_{B_{\frac{\delta}{\varepsilon}}}\int_{B_{\frac{\delta}{\varepsilon}}}\frac{1}
{(1+|x|^{2})^{\frac{q}{2}}|x-y|^{\mu}
(1+|y|^{2})^{\frac{q}{2}}}dxdy\\
&\geq \O(\varepsilon^{6-\mu-q})\int_{B_{\delta}}\int_{B_{\delta}}\frac{1}
{(1+|x|^{2})^{\frac{q}{2}}|x-y|^{\mu}
(1+|y|^{2})^{\frac{q}{2}}}dxdy=\O(\varepsilon^{6-\mu-q}),
\endaligned
\end{equation*}
\begin{equation*}
\aligned
\B
&=\varepsilon^{q}C\int_{\Omega\setminus B_{\delta}}\int_{B_{\delta}}\frac{1}{(\varepsilon^{2}+|x|^{2})^{\frac{q}{2}}|x-y|^{\mu}(\varepsilon^{2}+|y|^{2})^{\frac{q}{2}}}dxdy\\
&\leq \O(\varepsilon^{q})\Big(\int_{\Omega\setminus B_{\delta}}\frac{1}
{(\varepsilon^{2}+|x|^{2})^{\frac{3q}{6-\mu}}}dx\Big)^{\frac{6-\mu}{6}}\Big(\int_{B_{\delta}}\frac{1}
{(\varepsilon^{2}+|y|^{2})^{\frac{3q}{6-\mu}}}dy\Big)^{\frac{6-\mu}{6}}\\
&=\O(\varepsilon^{\frac{6-\mu}{2}})\Big(\int_{0}^{\frac{\delta}{\varepsilon}}\frac{z^{2}}
{(1+z^{2})^{\frac{3q}{6-\mu}}}dz\Big)^{\frac{6-\mu}{6}}\\
&\leq \O(\varepsilon^{\frac{6-\mu}{2}})\Big(\int_{0}^{+\infty}\frac{z^{2}}
{(1+z^{2})^{\frac{3q}{6-\mu}}}dz\Big)^{\frac{6-\mu}{6}}
=\O(\varepsilon^{\frac{6-\mu}{2}}).
\endaligned
\end{equation*}
for each $q>\frac{6-\mu}{2}$ and
\begin{equation}\label{D6}
\aligned
\C
&=\varepsilon^{q}C\int_{B_{2\delta}\setminus B_{\delta}}\int_{B_{2\delta}\setminus B_{\delta}}\frac{1}{(\varepsilon^{2}+|x|^{2})^{\frac{q}{2}}|x-y|^{\mu}
(\varepsilon^{2}+|y|^{2})^{\frac{q}{2}}}dxdy\\
&\leq\varepsilon^{q}C\int_{B_{2\delta}\setminus B_{\delta}}\int_{B_{2\delta}\setminus B_{\delta}}\frac{1}{|x|^{q}|x-y|^{\mu}
|y|^{q}}dxdy
=\O(\varepsilon^{q}).
\endaligned
\end{equation}
From \eqref{D3}-\eqref{D6}, we have
\begin{equation*}
\aligned
\int_{\Omega}\int_{\Omega}\frac{|u_{\varepsilon}(x)|^{q}
|u_{\varepsilon}(y)|^{q}}
{|x-y|^{\mu}}dxdy
&\geq \O(\varepsilon^{6-\mu-q})-\O(\varepsilon^{\frac{6-\mu}{2}})-\O(\varepsilon^{q})\\
&=\O(\varepsilon^{6-\mu-q})-\O(\varepsilon^{\frac{6-\mu}{2}}).
\endaligned
\end{equation*}
Next, concerning the second assertion, we have
\begin{equation}\label{E7}
\aligned
\int_{\R^3}\int_{\R^3}&\frac{|u_{\varepsilon}(x)|^{6-\mu}|u_{\varepsilon}(y)|^{6-\mu}}
{|x-y|^{\mu}}dxdy\\
&\geq \int_{B_{\delta}}\int_{B_{\delta}}\frac{|u_{\varepsilon}(x)|^{6-\mu}|u_{\varepsilon}(y)|^{6-\mu}}{|x-y|^{\mu}}dxdy\\
&=\int_{\R^3}\int_{\R^3}\frac{|U_{\varepsilon}(x)|^{6-\mu}|U_{\varepsilon}(y)|^{6-\mu}}{|x-y|^{\mu}}dxdy
-2\int_{\R^3\setminus B_{\delta}}\int_{B_{\delta}}\frac{|U_{\varepsilon}(x)|^{6-\mu}|U_{\varepsilon}(y)|^{6-\mu}}{|x-y|^{\mu}}dxdy\\
&\hspace{7mm}-\int_{\R^3\setminus B_{\delta}}\int_{\R^3\setminus B_{\delta}}\frac{|U_{\varepsilon}(x)|^{6-\mu}|U_{\varepsilon}(y)|^{6-\mu}}{|x-y|^{\mu}}dxdy\\
&=C(3,\mu)^{\frac{3}{2}}S_{H,L}^{\frac{6-\mu}{2}}-2\D-\E,
\endaligned
\end{equation}
where
$$
\D=\int_{\R^3\setminus B_{\delta}}\int_{B_{\delta}}\frac{|U_{\varepsilon}(x)|^{6-\mu}|U_{\varepsilon}(y)|^{6-\mu}}{|x-y|^{\mu}}dxdy,\ \
\E=\int_{\R^3\setminus B_{\delta}}\int_{\R^3\setminus B_{\delta}}\frac{|U_{\varepsilon}(x)|^{6-\mu}|U_{\varepsilon}(y)|^{6-\mu}}{|x-y|^{\mu}}dxdy.
$$
By direct computation, we know
\begin{equation*}
\aligned
\D&=\int_{\R^3\setminus B_{\delta}}\int_{B_{\delta}}\frac{|U_{\varepsilon}(x)|^{6-\mu}|U_{\varepsilon}(y)|^{6-\mu}}{|x-y|^{\mu}}dxdy\\
&=\varepsilon^{6-\mu}C\int_{\R^3\setminus B_{\delta}}\int_{B_{\delta}}\frac{1}
{(\varepsilon^{2}+|x|^{2})^{\frac{6-\mu}{2}}|x-y|^{\mu}(\varepsilon^{2}+|y|^{2})^{\frac{6-\mu}{2}}}dxdy\\
&\leq \O(\varepsilon^{6-\mu})\left(\int_{\R^3\setminus B_{\delta}}\frac{1}
{(\varepsilon^{2}+|x|^{2})^{3}}dx\right)^{\frac{6-\mu}{6}}\left(\int_{B_{\delta}}\frac{1}{(\varepsilon^{2}+|y|^{2})^{3}}dy\right)^{\frac{6-\mu}{6}}\\
&\leq \O(\varepsilon^{6-\mu})\left(\int_{\R^3\setminus B_{\delta}}\frac{1}
{|x|^{6}}dx\right)^{\frac{6-\mu}{6}}\left(\int_{0}^{\delta}\frac{r^{2}}{(\varepsilon^{2}+r^{2})^{3}}dr\right)^{\frac{6-\mu}{6}}
\leq \O(\varepsilon^{\frac{6-\mu}{2}})
\endaligned
\end{equation*}
and
\begin{equation}\label{E9}
\aligned
\E&=\int_{\R^3\setminus B_{\delta}}\int_{\R^3\setminus B_{\delta}}\frac{|U_{\varepsilon}(x)|^{6-\mu}|U_{\varepsilon}(y)|^{6-\mu}}{|x-y|^{\mu}}dxdy\\
&=\varepsilon^{6-\mu}C\int_{\R^3\setminus B_{\delta}}\int_{\R^3\setminus B_{\delta}}\frac{1}
{(\varepsilon^{2}+|x|^{2})^{\frac{6-\mu}{2}}|x-y|^{\mu}(\varepsilon^{2}+|y|^{2})^{\frac{6-\mu}{2}}}dxdy\\
&\leq\varepsilon^{6-\mu}C\int_{\R^3\setminus B_{\delta}}\int_{\R^3\setminus B_{\delta}}\frac{1}
{|x|^{6-\mu}|x-y|^{\mu}|y|^{6-\mu}}dxdy
=\O(\varepsilon^{6-\mu}).
\endaligned
\end{equation}
It follows from \eqref{E7} to \eqref{E9}  that
\begin{align*}
\int_{\R^3}\int_{\R^3}\frac{|u_{\varepsilon}(x)|^{6-\mu}|u_{\varepsilon}(y)|^{6-\mu}}
{|x-y|^{\mu}}dxdy
&\geq C(3,\mu)^{\frac{3}{2}}S_{H,L}^{\frac{6-\mu}{2}}-\O(\varepsilon^{\frac{6-\mu}{2}})-\O(\varepsilon^{6-\mu})\\
&=C(3,\mu)^{\frac{3}{2}}S_{H,L}^{\frac{6-\mu}{2}}-\O(\varepsilon^{\frac{6-\mu}{2}}). \notag
\end{align*}
This concludes the proof.
\end{proof}

\vspace{1cm}
\noindent {\bf Acknowledgements.} \
The authors would like to thank  the anonymous referee
for his/her useful comments and suggestions which help to improve the presentation of the paper greatly.

\end{document}